\newtheorem{lemma}[subsection]{Lemma}
\newtheorem{proposition}[subsection]{Proposition}
\newtheorem{theorem}[subsection]{Theorem}
\newtheorem{corollary}[subsection]{Corollary}
\newtheorem{claim}[subsection]{Claim}
\newtheorem{fact}[subsection]{Fact}
\newenvironment{remark}%
   {\refstepcounter{subsection}%
        \medbreak\noindent{\bf Remark \thesubsection\space}}%
   {\par\medbreak}%
\newenvironment{example}%
   {\refstepcounter{subsection}%
        \medbreak\noindent{\bf Example \thesubsection\space}}%
   {\par\medbreak}%
\newenvironment{proof}[1][\unskip]%
   {\medbreak\noindent{\it Proof #1:\space}}%
   {\par\noindent\vrule height 5pt width 5pt depth 0pt\smallbreak}%
\newcommand{\df}{\bf}
\let\sauvegardetiret=\-
\renewcommand{\-}[1]{\ifx#1-\penalty10000\hbox{-\relax}\penalty10000\else\sauvegardetiret#1\fi}
\newcommand{\NN}{{\rm\bf N}}
\newcommand{\cB}{{\cal B}}
\newcommand{\cC}{{\cal C}}
\newcommand{\cD}{{\cal D}}
\newcommand{\cH}{{\cal H}}
\newcommand{\cL}{{\cal L}}
\newcommand{\cO}{{\cal O}}
\newcommand{\cS}{{\cal S}}
\newcommand{\cV}{{\cal V}}
\newcommand{\cX}{{\cal X}}
\newcommand{\cY}{{\cal Y}}
\newcommand{\HH}{\cH_0}
\newcommand{\tq}{\mathop{:}}
\let \join = \vee  
\let \meet = \wedge  
\newcommand{\jjoin}{\mathop{\join\mskip-6mu\relax\join}}
\newcommand{\mmeet}{\mathop{\meet\mskip-6mu\relax\meet}}
\let \lor = \bigvee  
\let \land = \bigwedge  
\newcommand{\llor}{\mathop{\lor\mskip-11mu\relax\lor}}
\newcommand{\lland}{\mathop{\land\mskip-11mu\relax\land}}
\newcommand{\oc}{[\![}
\newcommand{\fc}{]\!]}
\newcommand{\UN}{\mathbf{1}}
\newcommand{\ZERO}{\mathbf{0}}
\let\SauveUA=\uparrow
\renewcommand{\uparrow}{{\SauveUA}}
\let\SauveDA=\downarrow
\renewcommand{\downarrow}{{\SauveDA}}
\newcommand{\CD}[1]{(CD$_{#1}$)}
\newcommand{\dd}[1]{d_{#1}}
\newcommand{\ddbar}[1]{\dd{\bar #1}}
\newcommand{\EQUATION}[5]{{\cal #1}_{#2,\bar #3}(\bar #4,#5)}
\newcommand{\EQparsol}{\EQUATION S t s}
\newcommand{\EQpar}[1]{\EQparsol {#1} q}
\newcommand{\ES}[1]{X_{#1}}
\newcommand{\ESF}[1]{\ES{\Free{#1}}}
\newcommand{\ESFbar}[1]{\ES{\Freebar{#1}}}
\newcommand{\Free}[1]{F({#1})}
\newcommand{\Freebar}[1]{\Free{\bar #1}}
\newcommand{\Llat}{\cL_{\text{lat}}}
\newcommand{\LHA}{\cL_{\text{HA}}}
\DeclareMathOperator{\EQUIV}{Equiv}
\DeclareMathOperator{\FC}{FC}
\DeclareMathOperator{\fKer}{Ker^\uparrow}
\DeclareMathOperator{\Spec}{Spec}
\DeclareMathOperator{\Th}{\text{Th}}
\title{On the model-completion of Heyting algebras}
\author{Luck Darni\`ere\footnote{D\'epartement de math\'ematiques, Facult\'e
des sciences, 2 bd Lavoisier, 49045 Angers, France.}}
\begin{document}

\maketitle

\begin{abstract}
  We axiomatize the model-completion of the theory of Heyting algebras
  by means of the ``Density'' and ``Splitting'' properties in
  \cite{darn-junk-2018}, and of a certain ``QE Property'' that we
  introduce here. In addition: we prove that this model-completion has
  a prime model, which is locally finite and which we explicitly
  construct; we show how the Open Mapping Theorem of
  \cite{gool-regg-2018} can be derived from the QE Property of
  existentially closed Heyting algebras; and we construct a certain
  ``discriminant'' for equations in Heyting algebras, similar to its
  ring theoretic counterpart. 
\end{abstract}

\section{Introduction}
\label{se:intro}

It is known since \cite{pitt-1992} that the second-order intuitionist
propositional calculus (IPC$^2$) is interpretable in the first-order
one (IPC). This proof-theoretic result, after translation in
model-theoretic terms, ensures that the theory of Heyting algebras has
a model-completion (see \cite{ghil-zawa-1997}). It has been followed
by numerous investigations on uniform interpolation, intermediate
logics and model-theory (\cite{krem-1997}, \cite{pola-1998},
\cite{ghil-1999}, \cite{bilk-2007}, \cite{darn-junk-2018},
\cite{gool-regg-2018}, among others). In particular, it is proved in
\cite{ghil-zawa-1997} that each of the height varieties of Heyting
algebras which has the amalgamation property also has a
model-completion. In spite of all this work, no intuitively meaningful
axiomatization was known by now for the model-completion of the theory
of Heyting algebras. So the algebraic nature of its models, the
existentially closed Heyting algebras, remains quite mysterious. Our
goal in this paper is to fill this lacuna.

Recently, van Gool and Reggio have given in \cite{gool-regg-2018} a
different proof of Pitts' result, by  showing that it can easily be
derived from the following ``Open Mapping Theorem''. 

\begin{theorem}[Open Mapping] \label{th:G-R}
  Every continuous p-morphism between finitely co-presented Esakia
  spaces is an open map. 
\end{theorem}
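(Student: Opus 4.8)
The plan is to pass through Esakia duality, recast openness of $f$ as a stabilisation property of finite approximations, and concentrate the real difficulty in one uniform-bound statement about morphisms. First, dualise: a finitely co-presented Esakia space is the dual space $X_A$ of a finitely presented Heyting algebra $A$, and a continuous p-morphism $f\colon X_A\to X_B$ is the Esakia dual of a homomorphism $h\colon B\to A$ of finitely presented Heyting algebras. Since $X_A$ is compact and $X_B$ is Hausdorff, $f$ is automatically a closed map, so $f$ is open if and only if it sends every clopen subset of $X_A$ to a clopen subset of $X_B$. Read through duality, this is the statement that $h$ has a left adjoint $\exists_h$ — a genuine existential quantifier, with the attendant Frobenius identities — so the Open Mapping Theorem is the topological face of Pitts's uniform interpolation, and one should not expect a soft proof.

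Second, use the pro-finite structure. As $A$ and $B$ are countable, $X_A$ and $X_B$ are second countable, hence each is an inverse limit of finite posets along surjective p-morphisms, $X_A=\varprojlim_n X_A^{(n)}$ and $X_B=\varprojlim_n X_B^{(n)}$, with bonding maps $\rho$ and projections $\sigma_n,\tau_n$. Two elementary facts follow, both by compactness: every clopen $K\subseteq X_A$ equals $\sigma_m^{-1}(K_m)$ for some $m$ and some arbitrary subset $K_m\subseteq X_A^{(m)}$; and a closed set $C\subseteq X_B$ is clopen exactly when the descending chain of clopens $\tau_n^{-1}(\tau_n C)$ stabilises. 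Since $f[K]$ is always closed, proving that $f$ is open reduces to showing that, for every clopen $K$ of $X_A$, the set $\tau_n(f[K])$ eventually pulls back to $f[K]$.

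Third — the crux — exploit finite presentability. The key claim is that, because \emph{both} $A$ and $B$ are finitely presented, one can take the approximating systems to be Ghilardi's canonical ones (graded by the implication-depth of the defining formulas over the generators) and find a single integer $c$ such that, for all large $n$, there is a p-morphism $f_n\colon X_A^{(n+c)}\to X_B^{(n)}$ with $\tau_n\circ f=f_n\circ\sigma_{n+c}$, compatibly with the bonding maps — in other words, $f$ is itself finitely co-presented, approximated with a \emph{uniform} shift $c$. Granting this, for $n+c\ge m$ one gets $\tau_n(f[K])=f_n\big[\rho_{m,n+c}^{-1}(K_m)\big]$ from surjectivity of $\sigma_{n+c}$, and then a one-step lifting in the finite approximants — given a point of $X_B^{(n+1)}$ lying above a point already realised by $f_n$, lift it through $f_{n+1}$, which the p-morphism back-property and the commuting squares allow — followed by König's lemma, produces for every $z$ with $\tau_n(z)\in\tau_n(f[K])$ a preimage of $z$ inside $K$; hence $f[K]=\tau_n^{-1}(\tau_n f[K])$ is clopen.

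The main obstacle is the uniform-shift claim: that a continuous p-morphism between finitely co-presented Esakia spaces is finitely approximable with a bound $c$ independent of $n$. Without it the maps $f_n$ still exist, but only with shifts $c(n)\to\infty$, and the inverse limit of the $f_n$ — each trivially open, being a map between finite posets — need not be open, which is precisely what can go wrong once one leaves the finitely co-presented world. A self-contained argument extracts $c$ from the finite presentations by a König's-lemma compactness on the tree whose level-$n$ nodes are the p-morphisms $X_A^{(n+c)}\to X_B^{(n)}$ compatible with the action of $h$ on the finitely many generators and relations; finite presentability is exactly what makes this tree finitely branching with no infinite escaping branch, so that a thread through it delivers $c$ together with the maps $f_n$. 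Everything else — the closed-map observation, the pro-finite reformulation, and the triviality of openness between finite posets — is routine.
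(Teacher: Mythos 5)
Your soft reductions are fine: the passage through Esakia duality, the observation that $f$ is automatically closed so that openness amounts to sending clopens to clopens, the profinite presentation $X_A=\varprojlim X_A^{(n)}$, and the reformulation ``$f[K]$ is clopen iff $\tau_n^{-1}(\tau_n f[K])$ stabilises'' are all correct. But you have misidentified the crux, and the step where the real difficulty sits is asserted in a single unjustified clause. The uniform-shift claim is \emph{not} the hard part: for the canonical degree-graded approximations, $\tau_n\circ f$ factors through $\sigma_{n+c}$ as soon as $c$ bounds the implication-degree of the images of the generators of $B$ under $h$, since substituting terms of degree $\le c$ into a formula of degree $\le n$ yields a formula of degree $\le n+c$; no K\"onig's lemma is needed, and no finiteness of the set of relations either. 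The genuine content is the ``one-step lifting'': given $u\in X_A^{(n+c)}$ with $f_n(u)=\tau_n(z)$ and $w\in X_B^{(n+1)}$ with $\rho'(w)=\tau_n(z)$, you need $u'\in X_A^{(n+1+c)}$ lying over $u$ with $f_{n+1}(u')=w$. You claim ``the p-morphism back-property and the commuting squares allow'' this, but the back-property of a p-morphism lifts points along the \emph{order} of a single finite poset, not along the \emph{fibres of the bonding maps}; what you need is that each commuting square is a weak pullback, and that is precisely the openness you are trying to prove, pushed down to the finite levels. Since the factorisation with a uniform shift is available for essentially any continuous p-morphism between profinite posets once the presentations are suitably re-graded, your argument, if it closed, would prove openness without the finitely co-presented hypothesis --- which is false, and which you yourself anticipate when you remark that one should not expect a soft proof because openness encodes the existence of the left adjoint $\exists_h$ with Frobenius, i.e.\ Pitts' uniform interpolation.

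For comparison, the paper does not attempt a direct combinatorial proof at all. It proves the special case of the inclusion $\Freebar p\hookrightarrow\Free{\bar p,q}$ (Theorem~\ref{th:OMT}), from which \cite{gool-regg-2018} derives the general statement, and the uniform modulus of openness $R_{l,d}$ is taken to be the index $h_{l,d}=h_{l,d}(\HH)$: its finiteness is exactly where Pitts' quantifier elimination enters, via the prime model $\HH$ and Proposition~\ref{pr:QE}, and the translation between solvability of systems and the existence of lifts of prime filters is Lemma~\ref{le:zero-Ker-Esa}. The quantity $h_{l,d}-d$ is the ``shift in the openness direction'', and it is this number --- not your $c$, which points in the continuity direction --- that carries the theorem. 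To repair your proof you would have to establish the weak-pullback property of the approximating squares with a level gap bounded by such an $h_{l,d}$, and there is no route to that bound in your proposal that does not reintroduce uniform interpolation or the model-completion machinery of Theorem~\ref{th:axiom-mod-comp}.
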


That both results are tightly connected is further illustrated by the
approach in this paper: we prove that, reciprocally, the Open Mapping
Theorem can easily be derived from Pitts' result. 

This, and our axiomatization of the model-completion of the theory of
Heyting algebras, is based on a new property of existentially closed
Heyting algebras that we introduce here, the ``QE property''. Its name
comes from the following characterisation: a Heyting algebra has the
QE property if and only if its complete theory eliminates the
quantifiers (Proposition~\ref{pr:QE}). The smart reader may observe
that, knowing by Pitts' result that the theory of existentially closed
Heyting algebras eliminates the quantifiers, it is not an achievement
to prove that all its models have the QE property!

But things are a bit more complicated: given a model $M$ of a theory
$T$ having a model-completion $T^*$ , if $M\models T^*$ then obviously the
theory of $M$ eliminates the quantifiers, however the converse is not true
in general. For example every dense Boolean algebra has the
QE-property (because the theory of dense Boolean algebras eliminates
the quantifiers), {\em but} none of them is existentially closed in
the class of all Heyting algebras. Moreover the QE-property itself is
probably not first-order in general. Our strategy in the present paper
is then: firstly to characterize {\em among} the Heyting algebras
having the QE property, those which are existentially closed; and
secondly to prove that the QE-property is first-order axiomatisable in
this class. This is done by showing first that a certain Heyting
algebra $\HH$, obtained by amalgamating appropriately a set of
representatives of all the finite Heyting algebras, is existentially
closed. By Pitts' result $\HH$ is then a model of the model-completion
of the theory of Heyting algebras, hence it has the QE property.
Moreover it is locally finite by construction, and a prime model of
this model-completion. Actually it is the only countable and
existentially closed Heyting algebra which is locally finite. 
\\

This paper is organised as follows. We recall the needed prerequisites
in Section~\ref{se:notation}. In Section~\ref{se:prime-model} we
construct the prime model $\HH$. In Section~\ref{se:hensel}, after
introducing the QE property, we use that $\HH$ is a prime model to
prove that a Heyting algebra $H$ is existentially closed if and only
if it has the QE property {\em and} the ``Splitting'' and ``Density''
properties that we introduced in \cite{darn-junk-2018}. In the next
Section~\ref{se:OMT} we show how the Open Mapping Theorem can be
derived from this. Finally we turn to explicit elimination. Indeed the
elimination of quantifiers is done by Pitts in a very effective way,
but it can be somewhat puzzling from an algebraic point of view. So in
Section~\ref{se:disc} it seemed to us that it could be worth showing
how, given a finite system of equations
\begin{displaymath}
  t(\bar p,q)=\UN\quad\mbox{and}\quad s_k(\bar p,q)\neq\UN\quad
  (\mbox{for }1\leq k\leq \kappa),
\end{displaymath}
we can use the Open Mapping Theorem to explicitly construct a term
$\Delta_t(\bar p)$ (the ``discriminant'' of $t$) and terms $\nabla_{t,s_k}(\bar
p)$ for $1\leq k\leq \kappa$ (the ``co-discriminant'' of $t$ and $s_k$), such
that for every tuple $\bar a$ in a Heyting algebra $A$, the above
system (with parameters $\bar a$ replacing $\bar p$), has a solution in
an extension of $A$ if and only if $\Delta_t(\bar a)=\UN$ and
$\nabla_{t,s_k}(\bar a)\neq\UN$ for $1\leq k\leq \kappa$. The appendix
Section~\ref{se:appendix} is devoted to the Finite Extension Property,
a strengthening of the Finite Model Property of IPC which plays a key
role in proving that the Heyting algebra constructed
Section~\ref{se:prime-model} is indeed existentially closed. 

\begin{remark}\label{re:V2}
  It is claimed in \cite{ghil-zawa-1997} that the methods of Pitts, which
  concern the variety $\cV_1$ of all Heyting algebras, apply {\it
  mutatis mutandis} to the variety $\cV_2$ of the logic of the weak
  excluded middle (axiomatized by $\lnot  x\join \lnot \lnot  x = \UN$). Consequently
  all the results presented here for $\cV_1$ remain true for $\cV_2$.
  It suffices to restrict the amalgamation in
  Section~\ref{se:prime-model} to the finite algebras in $\cV_2$, and
  to replace the ``Density'' and ``Splitting'' axioms for $\cV_1$ (D1
  and S1 in \cite{darn-junk-2018}) by the corresponding axioms for
  $\cV_2$ (D2 and S2 in \cite{darn-junk-2018}). 
\end{remark}

\section{Notation and prerequisites}
\label{se:notation}

\paragraph{Posets and lattices.}
For every element $a$ in a poset $(E,\leq)$ we let $a\uparrow=\{b\in E\tq a\leq b\}$
For every subset $S$ of $E$ we let $S\uparrow=\bigcup_{s\in S}s\uparrow$ be the set of
elements of $E$ greater than or equal to some element of $S$. An
{\df up-set} of $E$ is a set $S\subseteq E$ such that $S=S\uparrow$. The family of
all up-sets of $E$ forms a topology on $E$ that we denote by
$\cO^\uparrow(E)$. 

All our lattices will be assumed distributive and bounded. The
language of lattices is then $\Llat=\{\ZERO,\UN,\join,\meet\}$, where $\ZERO$
stands for the smallest element, $\UN$ for the greatest one, $\join$ and
$\meet$ for the join and meet operations respectively. Iterated joins
and meets will be denoted $\jjoin$ and $\mmeet$. Of course the order of the
lattice is definable by $b\leq a$ if and only if $a\join b=a$. 
The language of Heyting algebras is $\LHA=\Llat\cup\{\to\}$ where the new
binary operation symbol is interpreted as usually by
\begin{displaymath}
b\to a=\max\big\{c\tq c\meet b\leq a\big\}.
\end{displaymath}

\paragraph{Stone spaces and morphisms.} 
The prime filter spectrum, or Stone space, of a distributive bounded
lattice $A$ is denoted $\Spec^\uparrow A$. The {\df Zariski topology} on
$\Spec^\uparrow A$ is generated by the sets
\begin{displaymath}
  P^\uparrow(a)=\big\{x\in\Spec^\uparrow A\tq a\in x\big\}
\end{displaymath}
where $a$ ranges over $A$. The {\df patch topology} is generated by
the sets $P^\uparrow(a)$ and their complements in $\Spec^\uparrow A$.

Stone's duality states that $a\mapsto P^\uparrow(a)$ is an embedding of bounded
lattices (an $\Llat$\--embedding for short) of $A$ into
$\cO^\uparrow(\Spec^\uparrow A)$. If moreover $A$ is a Heyting algebra, it is an
embedding of Heyting algebras (an $\LHA$\--embedding for short). In
particular, for every $a,b\in A$ and every $x\in\Spec^\uparrow A$,
\begin{displaymath}
  a\to b\in x \iff \forall y\geq x,\ \big[a\in y \Rightarrow b\in y\big].
\end{displaymath}

Facts~\ref{fa:radical} and \ref{fa:ideal-quo} below are
folklore. The reader may refer in particular to
\cite{balb-dwin-1974}, chapters III and IV.

\begin{fact}\label{fa:radical}
  Every filter is the intersection of the prime filters which contain
  it. 
\end{fact}

Given an $\Llat$\--morphism $f:B\to A$ we let $\fKer f=f^{-1}(\UN)$
denote its {\df filter kernel}, and $f^\uparrow:\Spec^\uparrow A\to\Spec^\uparrow B$ be the
dual map defined by 
\begin{displaymath}
  f^\uparrow(x):=f^{-1}(x). 
\end{displaymath}

\begin{fact}\label{fa:ideal-quo}
  If $f$ is surjective then $f^\uparrow$ induces an increasing bijection 
  from $\Spec^\uparrow A$ to $\{y\in\Spec^\uparrow B\tq \fKer f\subseteq y\}$.
\end{fact}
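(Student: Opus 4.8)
The plan is to produce an explicit inverse for $f^\uparrow$ and to check that both it and $f^\uparrow$ are monotone, so that $f^\uparrow$ is an order\--isomorphism from $\Spec^\uparrow A$ onto $V:=\{y\in\Spec^\uparrow B\tq\fKer f\subseteq y\}$.

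First I would check that $f^\uparrow$ does map into $V$. For a prime filter $x$ of $A$, the preimage $f^{-1}(x)$ is a filter of $B$ (being the preimage of a filter under an $\Llat$\--morphism), it is proper because $f(\ZERO)=\ZERO\notin x$, and it is prime because $f$ preserves $\join$; moreover $\UN\in x$ gives $\fKer f=f^{-1}(\UN)\subseteq f^{-1}(x)$. Monotonicity of $f^\uparrow$ is immediate since $x\subseteq x'$ forces $f^{-1}(x)\subseteq f^{-1}(x')$. Injectivity is the first place the surjectivity of $f$ is used: if $f^{-1}(x)=f^{-1}(x')$ then, writing an arbitrary $a\in A$ as $a=f(b)$, we get $a\in x\iff b\in f^{-1}(x)=f^{-1}(x')\iff a\in x'$, hence $x=x'$.

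The heart of the proof is surjectivity onto $V$. Given $y\in V$, I would take $x:=f(y)=\{f(b)\tq b\in y\}$ and show it is a prime filter of $A$ with $f^\uparrow(x)=y$. It contains $\UN$ and is closed under $\meet$ because $y$ is a filter; it is an up\--set because $f$ is surjective (given $f(b)\in x$ with $b\in y$ and $a'\geq f(b)$, pick $b'$ with $f(b')=a'$: then $a'=f(b)\join f(b')=f(b\join b')$ and $b\join b'\in y$). The two substantial points are that $x$ is \emph{proper} and that $f^{-1}(f(y))=y$ (the inclusion $y\subseteq f^{-1}(f(y))$ being trivial), and both are exactly where the hypothesis $\fKer f\subseteq y$ enters. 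For the second, if $f(b)=f(b')$ with $b'\in y$ then $f(b'\to b)=f(b')\to f(b)=\UN$, so $b'\to b\in\fKer f\subseteq y$; since also $b'\in y$ we get $b'\meet(b'\to b)\in y$, and $b\geq b'\meet(b'\to b)$ forces $b\in y$. For the first, if $\ZERO=f(b)$ with $b\in y$ then $f(\lnot b)=\lnot f(b)=\UN$, so $\lnot b\in\fKer f\subseteq y$, whence $\ZERO=b\meet\lnot b\in y$, contradicting that $y$ is proper. With this, $f^\uparrow(x)=f^{-1}(f(y))=y$; the assignment $y\mapsto f(y)$ is then a two\--sided inverse of $f^\uparrow$ (one also has $f(f^{-1}(x))=x$ since $f$ is onto), and it is monotone because direct images preserve inclusions. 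Hence $f^\uparrow$ is the desired increasing bijection $\Spec^\uparrow A\to V$.

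I expect the only real obstacle to be this surjectivity step, and specifically the verification that the candidate $x=f(y)$ is a proper prime filter with $f^{-1}(x)=y$: that is the single point that is not formal bookkeeping with filters and preimages, and it is where one genuinely needs $\fKer f\subseteq y$ together with the Heyting structure (equivalently, the fact that $A$ is the quotient of $B$ by the filter $\fKer f$). Fact~\ref{fa:radical} is not needed for this route.
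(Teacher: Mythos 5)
The paper offers no proof of this Fact: it is declared folklore, with a pointer to Balbes--Dwinger, so there is nothing to compare your argument against except the standard one --- which is essentially what you give, and it is correct. Two small remarks. First, you announce that $x=f(y)$ is a \emph{prime} filter but never verify primeness; it does follow in one line from what you establish (if $f(b)\join f(b')=f(b\join b')\in f(y)$, then $b\join b'\in f^{-1}(f(y))=y$, so $b\in y$ or $b'\in y$ by primeness of $y$, whence $f(b)\in f(y)$ or $f(b')\in f(y)$), but that line should appear. Second, your proof uses $f(b'\to b)=f(b')\to f(b)$ and $f(\lnot b)=\lnot f(b)$, i.e.\ it needs $f$ to be an $\LHA$\--morphism, whereas the sentence introducing $f^\uparrow$ only declares $f$ to be an $\Llat$\--morphism. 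This is not a defect of your argument but a necessary reading of the statement: for a mere surjective $\Llat$\--morphism of Heyting algebras the claim is false (send the middle element of the three-element chain to $\ZERO$ in $\{\ZERO,\UN\}$; the filter kernel is $\{\UN\}$, yet the dual map hits only one of the two prime filters of the chain), and every application of the Fact in the paper is to $\LHA$\--morphisms, so your implicit strengthening of the hypothesis is the intended one.
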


In Heyting algebras, every congruence comes from a filter. So the
classical ``Isomorphisms Theorems'' of universal algebra can be
expressed for morphisms of Heyting algebras ($\LHA$\--morphisms for
short) in terms of filters, just as analogous statements on
morphisms of rings are expressed in terms of ideals. In particular,
we will make use of the next property. 

\begin{fact}[Factorisation of Morphisms]\label{fa:facto-morph}
  Let $f:B\to A$ be an $\LHA$\--mor\-phism. Given a filter $I$ of $B$,
  let $\pi_I:B\to B/I$ be the canonical projection. Then $f$ factors
  through $\pi_I$, that is $f=g\circ\pi_I$ for some $\LHA$\--morphism
  $g:B/I\to A$, if and only if $\fKer f\subseteq I$. 
  \begin{center}
    \begin{tikzcd}
      B \arrow[r,"f"] \arrow[d,two heads,"\pi_I"'] & A \\
      B/I \arrow[ur,dashed,"g"'] & 
    \end{tikzcd}
  \end{center}
  In that case, such an $\LHA$\--morphism $g$ is unique. It is
  injective if and only if $\fKer f=I$.
\end{fact}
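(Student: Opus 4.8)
The plan is to treat this as the universal property of a quotient: ``$f$ factors through $\pi_I$'' means exactly that $f$ is constant on the fibres of $\pi_I$, and I will convert this into a comparison of two filters. The key preliminary observation is that, by definition of the quotient of $B$ by the filter $I$, one has $\pi_I(b)=\UN$ iff $b\in I$, i.e. $\fKer\pi_I=I$. Since in Heyting algebras every congruence comes from a filter, for any $\LHA$\--morphism $h$ one recovers $h(b)=h(b')\iff b\leftrightarrow b'\in\fKer h$, writing $b\leftrightarrow b':=(b\to b')\meet(b'\to b)$. The whole statement then reduces to comparing $I=\fKer\pi_I$ with $\fKer f$, and the factorisation criterion comes out as $I\subseteq\fKer f$, the exact analogue of the ring-theoretic $I\subseteq\ker f$ invoked above.

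For the forward direction, suppose $f=g\circ\pi_I$. If $b\in I$ then $\pi_I(b)=\UN$, so $f(b)=g(\pi_I(b))=g(\UN)=\UN$ since $g$ preserves $\UN$; hence $b\in\fKer f$, giving $I\subseteq\fKer f$. This one line already fixes the direction of the inclusion. Conversely, assuming $I\subseteq\fKer f$, I set $g(\pi_I(b)):=f(b)$. It is well defined because $\pi_I(b)=\pi_I(b')$ gives $b\leftrightarrow b'\in I\subseteq\fKer f$, whence $f(b)\leftrightarrow f(b')=f(b\leftrightarrow b')=\UN$ and so $f(b)=f(b')$. That $g$ is an $\LHA$\--morphism is routine: the operations on $B/I$ are computed on representatives through the surjection $\pi_I$, so $g$ inherits preservation of $\ZERO,\UN,\join,\meet$ and of $\to$ from the identities satisfied by $f$ and $\pi_I$. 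By construction $g\circ\pi_I=f$.

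Uniqueness is immediate from surjectivity of $\pi_I$: two morphisms that agree after $\pi_I$ agree on $\pi_I(B)=B/I$. For injectivity I use that an $\LHA$\--morphism is injective iff its filter kernel is $\{\UN\}$ (once more because congruences correspond to filters). Here $\fKer g=\{\pi_I(b)\tq f(b)=\UN\}=\pi_I(\fKer f)$, and this is $\{\UN\}$ iff every element of $\fKer f$ lies in $I$, i.e. iff $\fKer f\subseteq I$; together with the inclusion $I\subseteq\fKer f$ already forced by the existence of $g$, this makes $g$ injective precisely when $\fKer f=I$.

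Since the result is folklore the argument is elementary, and the only real point of care is the direction of the inclusion in the factorisation criterion. The forward line above settles it, and it is confirmed dually by Fact~\ref{fa:ideal-quo}: $\pi_I^\uparrow$ identifies $\Spec^\uparrow(B/I)$ with $\{y\in\Spec^\uparrow B\tq I\subseteq y\}$, so $f^\uparrow=\pi_I^\uparrow\circ g^\uparrow$ forces $I\subseteq f^{-1}(x)$ for every $x\in\Spec^\uparrow A$, which by Fact~\ref{fa:radical} says exactly $I\subseteq\fKer f$. The one genuinely structural verification is that the induced $g$ respects $\to$, and that is precisely where one uses that a quotient of a Heyting algebra by a filter is again a Heyting algebra with operations induced along $\pi_I$.
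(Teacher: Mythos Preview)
The paper states this as folklore and gives no proof, so there is nothing to compare your argument against; your verification is the standard universal-algebra one and is correct throughout.

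One point is worth flagging explicitly. You prove (correctly) that the factorisation criterion is $I\subseteq\fKer f$, whereas the statement as printed has the inclusion reversed, $\fKer f\subseteq I$. Your forward-direction line settles the matter: if $b\in I$ then $\pi_I(b)=\UN$, hence $f(b)=g(\UN)=\UN$, so $I\subseteq\fKer f$; this is exactly the Heyting-algebra analogue of the ring-theoretic $I\subseteq\ker f$ that the paper itself invokes as motivation. The printed inclusion is therefore a typo, and you have proven the correct version. In the one place the paper actually uses this Fact (the proof of Lemma~\ref{le:zero-Ker-Esa}, where the restriction of the projection has filter kernel equal to $\fKer\pi_{\bar a}$), the two filters coincide, so the slip is harmless there. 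Your closing remark that ``the only real point of care is the direction of the inclusion'' is thus well placed; you might make the correction of the stated direction explicit rather than leaving it implicit.
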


\paragraph{Esakia spaces and free Heyting algebras.}
An {\df Esakia space} (see \cite{esak-1974}) is a poset $X$ with a
compact topology such that: $(i)$ whenever $x \nleq y$ in $(X,\leq)$ there
is a clopen $U\subseteq X$ that is an up-set for $\leq$ such that $x\in U$ and
$y\notin U$; and $(ii)$ $U\downarrow$ is clopen whenever $U$ is a clopen subset of
$X$. Given a Heyting algebra $A$, its prime filter spectrum ordered
by inclusion and endowed with the patch topology is an Esakia space,
called the {\df Esakia dual} of $A$. We denote it by $\ES A$. 

Let $\bar p=\{p_1,\dots,p_l\}$ be a finite set of variables, $\Freebar p$
be the free Heyting algebra on $\bar p$, and $\ESFbar p$ its Esakia
space. For every $l$\--tuple $\bar a$ in a Heyting
algebra $A$ we let
\begin{displaymath}
  \pi_{\bar a}:\Freebar p\to A 
\end{displaymath}
denote the canonical morphism mapping $\bar p$ to $\bar a$. 

Every formula $\varphi(\bar p)$ in the Intuitionistic Propositional
Calculus (IPC) can be read as an $\LHA$\--term in the variables
$\bar p$, and conversely. The {\df degree} $\deg\varphi$ is the maximum
number of nested occurrences of $\to$ in $\varphi$. Abusing the notation, we
will also consider $\varphi$ as an element of $\Freebar p$. We let
\begin{displaymath}
  \oc\varphi(\bar p)\fc = \big\{x\in\ESFbar p\tq \varphi(\bar p)\in x\big\}.
\end{displaymath}
As a set of prime filters, $\oc\varphi(\bar p)\fc$ is nothing but
$P^\uparrow(\varphi(\bar p))$. The different notation simply reminds us to consider
it as a subset of $\ESFbar p$ with the patch topology, instead of a
subset of $\Spec^\uparrow \Freebar p$ with the Zariski topology.

\paragraph{Systems of equations and the Finite Extension Property.}
Every finite system of equations and negated equations in the
variables $(\bar p,\bar q)$, is equivalent to an
$\LHA$\--formula of the form
\begin{displaymath}
  \EQparsol p {\bar q} 
  =\Big[t(\bar p,\bar q)=\UN\land\lland_{k\leq \kappa}s_k(\bar p,\bar q)\neq\UN\Big].
\end{displaymath}
The {\df degree} of $\EQparsol p {\bar q}$, with $\bar s=(s_1,\dots,s_\kappa)$,
is the maximum of the degrees of the IPC formulas $t$ and $s_k$ for
$1\leq k\leq\kappa$. We will identify every such formula with the corresponding
system of equations and negated equations in $(\bar p,\bar q)$,
calling $\EQparsol p {\bar q}$ itself a {\df system of degree $d$}.
Given a tuple $\bar a$ in a Heyting algebra $H$, a {\df solution} of
$\EQpar a$ is then a tuple $\bar b$ such that $H\models\EQparsol a {\bar
b}$.

\begin{theorem}[Finite Extension Property]\label{th:FEP-intro}
  Let $\cV$ be a variety of Heyting algebras having the Finite Model
  Property. If an existential $\LHA$\--formula with parameters in a
  finite $\cV$\--algebra $A$ is satisfied in a $\cV$\--algebra
  containing $A$, then it is satisfied in some finite $\cV$\-algebra
  containing $A$. 
\end{theorem}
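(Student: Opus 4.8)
The plan is to work on the dual side, using Esakia duality, and to exploit the Finite Model Property of $\cV$ together with the finiteness of $A$.

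First I would set up the duality. Write the existential formula as a system $\EQpar a$ of degree $d$ with parameters $\bar a = (a_1,\dots,a_l)$ from the finite $\cV$-algebra $A$, and suppose $B \supseteq A$ is a $\cV$-algebra in which it has a solution $\bar b$. The sub-$\cV$-algebra $B'$ of $B$ generated by $\bar a$ and $\bar b$ is finitely generated and still contains $A$ and still satisfies the system, so I may replace $B$ by $B'$ and assume $B$ is finitely generated. The inclusion $A \hookrightarrow B$ dualizes to a surjective continuous p-morphism $\pi \colon \ES B \twoheadrightarrow \ES A$ of Esakia spaces, and $\ES A$ is finite since $A$ is. The solution $\bar b$, being a tuple in $B$ which together with $\bar a$ generates $B$, corresponds to a continuous p-morphism $\ES B \to \ESFbar{pq}$ realizing the system; the ``degree $d$'' bound means that what matters about $\ES B$ is only its structure ``up to depth'' controlled by $d$ — more precisely, $B$ is finitely generated, so $\ES B$ is an inverse limit of finite posets, and the truth of a degree-$d$ system is detected at a finite stage.

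The key step is then a truncation/approximation argument. Since $B$ is finitely generated and $\cV$ has the Finite Model Property, the variety $\cV$ is generated by its finite members, and the finitely generated free $\cV$-algebra surjecting onto $B$ has an Esakia dual that is a profinite poset $\varprojlim X_n$ with each $X_n$ finite; I would take $n$ large enough (depending only on $d$) so that the finite quotient $B_n$ of $B$ through stage $n$ still contains $A$ (this is where I use that $A$ is finite and embeds into $B$: the composite $A \to B \to B_n$ is injective for $n$ large, because injectivity of a map out of a finite algebra is witnessed by finitely many inequalities, each of which is preserved at some finite stage) and still satisfies the degree-$d$ system $\EQpar a$ (because satisfaction of $t(\bar a,\bar b)=\UN$ and $s_k(\bar a, \bar b)\neq \UN$ is again a finite collection of equalities and inequalities, each decided at a finite stage). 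Then $B_n$ is a finite $\cV$-algebra containing $A$ in which the formula holds, as required.

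The main obstacle I expect is the interplay between the two finiteness conditions: I must choose the truncation level $n$ uniformly enough that it simultaneously preserves the \emph{embedding} $A \hookrightarrow B$ and the \emph{satisfaction} of the existential formula. The embedding part is the more delicate one — a priori the maps $A \to B_n$ could fail to be injective for every $n$ if the kernel ``escapes to the limit,'' so I would argue carefully that since $\ker(A \to B) = \{\UN\}$ and $A$ is finite, for each $a \in A$ with $a \neq \UN$ there is a finite stage at which the image of $a$ is still $\neq \UN$, and then take $n$ past the (finitely many) stages needed for all such $a$. An alternative, perhaps cleaner, route avoids filtered limits entirely: dualize to get $\pi\colon \ES B \twoheadrightarrow \ES A$, use the degree-$d$ hypothesis to replace $\ES B$ by a finite Esakia space $Y$ obtained by a ``bounded-depth'' quotient of $\ES B$ over each fiber of $\pi$ (a standard filtration construction for IPC, which is exactly what the Finite Model Property provides), check that the quotient map $Y \to \ES A$ is still a p-morphism and that the truncated solution still works on $Y$, and finally dualize back to obtain the desired finite $\cV$-algebra containing $A$. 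I would present whichever of these the paper's Section~\ref{se:appendix} machinery makes most economical.
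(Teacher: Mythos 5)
Your overall strategy --- pass to a finitely generated $B$, use the Finite Model Property to produce a finite algebra in which a bounded-degree approximation of the data survives, then argue that $A$ still embeds --- has the right general shape, but the step you yourself flag as ``the more delicate one'' is where the proof actually lives, and neither of your two routes closes it. In the profinite route, the finite stages of $\ES B=\varprojlim X_n$ dualize to finite bounded \emph{sublattices} of $B$, not to finite Heyting algebras: the bonding maps $X_{n+1}\to X_n$ are merely order-preserving, not p-morphisms, so there are no ``finite quotients $B_n$ of $B$'' to truncate to (if there were, every finitely generated Heyting algebra would be a directed union of finite subalgebras, hence locally finite, which already fails for $\Free{p}$). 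In the filtration route, the filtrated structure preserves the truth values of the finitely many formulas placed in the subformula-closed set, but the passage to the filtration is not an $\LHA$\--morphism in either direction; so there is no map $A\to B_n$ whose injectivity you could check on finitely many inequalities. What either route genuinely yields is a finite $\cV$\--algebra $H'$ and a tuple $\bar a'$ with $\Th_n(\bar a)=\Th_n(\bar a')$ for your chosen finite $n$, together with a solution of the system at $\bar a'$.

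The gap is then: why does $\Th_n(\bar a)=\Th_n(\bar a')$ for a \emph{finite} $n$ force $\fKer\pi_{\bar a}=\fKer\pi_{\bar a'}$, i.e.\ force $\bar a'$ to generate an isomorphic copy of $A$ in $H'$? A priori this is one condition per element of the infinite algebra $\Freebar p$, and for a general tuple $\bar a$ no finite $n$ suffices. The paper's proof (Theorem~\ref{th:FEP-apx}) rests entirely on Lemma~\ref{le:FEP}: since $A$ is finite it has finite dimension $d$, the filter $(d+1)\Freebar p$ is principal and of finite index by \CD 2 and \CD 3, hence only finitely many prime filters of $\Freebar p$ lie above it and each is principal, generated by some $\xi_i(\bar p)$ of bounded degree; consequently $\fKer\pi_{\bar a}$ is completely determined by which of the finitely many $\xi_i(\bar a)$ equal $\UN$. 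This uses the codimension machinery of \cite{darn-junk-2011} and is not a formal consequence of filtration or profiniteness. Once it is in hand, the remainder is close to your plan, but implemented purely syntactically: one adjoins to the system the single equation $u(\bar p)=\UN$ and the finitely many inequations $v_i(\bar p)\neq\UN$ recording $\Th_n(\bar a)$, and applies the parameter-free Proposition~\ref{pr:EFMP}. So: right scaffolding, but the load-bearing lemma is missing.
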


We will use this improvement of the classical Finite Model Property
only for the variety of all Heyting algebras, and for formulas $\exists\bar
q,\,\EQparsol a {\bar q}$. The proof requires some notions of
co-dimension coming from \cite{darn-junk-2011}. Since this material is
not needed anywhere else in this paper, we delay the proof of the
Finite Extension Property to the appendix in Section~\ref{se:appendix}.

\paragraph{Splitting and Density axioms.}

In \cite{darn-junk-2018} we introduced for co-Heyting algebras the
notion of ``strong order''. It corresponds in Heyting algebras to
the following relation. 
\begin{displaymath}
  b \gg a \iff b\to a=a\mbox{ and }b\geq a
\end{displaymath}
This is a strict order on $A\setminus\{\UN\}$ (not on $A$ because $\UN\gg a$ for
every $a\in A$, including $a=\UN$). Above all, we introduced the next
two axioms (denoted D1 and S1 in \cite{darn-junk-2018}), which
translate to the following ones in Heyting algebras.

\begin{description}
  \item{\bf [Density]} 
    For every $a,c$ such that $c \gg a \neq \UN$ there exists an
    element $b\neq\UN$ such that:
     \begin{displaymath}
       c \gg b \gg a 
     \end{displaymath}
  \item{\bf [Splitting]} 
    For every $a,b_1,b_2$ such that $b_1 \meet b_2 \gg a \neq \UN$ there
    exists $a_1\neq\UN$ and $a_2\neq\UN$ such that:
    \begin{displaymath}
      \begin{array}{c} 
        a_2\to a = a_1 \leq b_1\\
        a_1\to a = a_2 \leq b_2\\
        a_1 \join a_2 = b_1 \join b_2 
      \end{array}
    \end{displaymath}
\end{description}

The Density axiom clearly says that $\gg$ is a dense order. The
Splitting axiom implies the lack of atoms and co-atoms, but it is
actually much stronger (and more subtle) than this. For a geometric
interpretation of the Splitting Property, we refer the reader to
\cite{darn-junk-2018} and \cite{darn-2018-tmp}. The two next properties are theorems~1.1
and 1.2 in \cite{darn-junk-2018}. 

\begin{theorem}\label{th:E-clos-DS}
  Every existentially closed Heyting algebra has the Density
  and Splitting Properties. 
\end{theorem}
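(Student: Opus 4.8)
The plan is to use the defining property of existential closedness directly: if $A$ is existentially closed and an existential $\LHA$-formula with parameters in $A$ holds in some Heyting algebra extending $A$, then it already holds in $A$. Both the Density and the Splitting property are $\LHA$-sentences of the form $\forall\bar p\,\bigl(\theta(\bar p)\to\exists\bar q\,\psi(\bar p,\bar q)\bigr)$ with $\theta$ and $\psi$ quantifier-free (conjunctions of equations, inequalities and negated equations), so $\exists\bar q\,\psi$ is an existential formula. Hence it suffices to prove, for an \emph{arbitrary} Heyting algebra $A$: whenever a tuple $\bar a\in A$ satisfies the hypothesis $\theta$ of Density (resp.\ of Splitting), there is a Heyting algebra $B$ together with an $\LHA$-embedding $A\hookrightarrow B$ such that $\exists\bar q\,\psi(\bar a,\bar q)$ holds in $B$. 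Taking $A$ to be the given existentially closed algebra then yields the theorem.

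For the Density property I would build the extension through Esakia duality. Write $X=\ES A$, identify $A$ with the algebra of clopen up-sets of $X$, and for $a\in A$ let $\widehat a=\{x\in X : a\in x\}$ be the corresponding clopen up-set; recall that $a\to b$ corresponds to $\{x\in X : x\uparrow\cap\widehat a\subseteq\widehat b\}$. If $c\gg a\neq\UN$ then $\widehat a\subseteq\widehat c$ and, putting $D=\widehat c\setminus\widehat a$, the equation $c\to a=a$ says precisely that $D\downarrow=X\setminus\widehat a$; in particular $D\neq\emptyset$, since $\widehat a\neq X$. Now enlarge $X$ to $Y=X\sqcup\widetilde D$, where $\widetilde D$ is a disjoint homeomorphic copy of $D$ placed ``just below'' $D$: declare $\widetilde d\leq x$ iff $d\leq x$, $x\leq\widetilde d$ iff ($x\notin\widehat c$ and $x\leq d$), and $\widetilde d_1\leq\widetilde d_2$ iff $d_1\leq d_2$. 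One checks that $Y$ is again an Esakia space and that the projection $\rho\colon Y\to X$ which collapses $\widetilde D$ onto $D$ and is the identity on $X$ is a surjective continuous p-morphism; dually $\rho$ is the dual of an $\LHA$-embedding $A\hookrightarrow B$, where $B$ is the algebra of clopen up-sets of $Y$, under which $a$ and $c$ become $\rho^{-1}(\widehat a)=\widehat a$ and $\rho^{-1}(\widehat c)=\widehat c\cup\widetilde D$. Finally let $b\in B$ be the element given by the clopen up-set $\widehat c$ viewed inside $Y$ (it is still an up-set of $Y$, because no point of $\widetilde D$ lies above a point of $\widehat c$). A short computation of downsets in $Y$ then shows that, in $B$, one has $a\leq b\leq c$, $c\gg b\gg a$, and $b\neq\UN$ (again because $D\neq\emptyset$); so $b$ is the required witness.

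For the Splitting property I would proceed in the same spirit: given $b_1\meet b_2\gg a\neq\UN$, carry out an analogous surgery on $\ES A$, this time duplicating the clopen region $\widehat{b_1\meet b_2}\setminus\widehat a$ and distributing the two copies so that one of them sits inside $\widehat b_1$ and the other inside $\widehat b_2$, then reading off $a_1$ and $a_2$ as suitable clopen up-sets of the enlarged space. (Both extensions can alternatively be produced as quotients of a free one-generator extension $A\amalg F(x)$, but the dual picture makes the choice of witnesses transparent.) The main obstacle, in both cases, is getting this construction exactly right. One must choose the order on the enlarged space so that: (i) it is genuinely an Esakia space (the separation property, and stability of clopen subsets under $U\mapsto U\downarrow$); (ii) the collapsing map is a surjective p-morphism --- this is what makes $A$ truly embed into $B$ and is responsible for the preservation of $\to$; and (iii) the distinguished new clopen up-sets satisfy \emph{exactly} the prescribed equations and negated equations. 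The non-triviality clauses ``$b\neq\UN$'' and ``$a_1,a_2\neq\UN$'' are precisely where the hypotheses ``$a\neq\UN$'' and ``$b_1\meet b_2\gg a\neq\UN$'' are used (through $D\neq\emptyset$); everything else is routine bookkeeping with clopen up-sets, and the reduction of the first paragraph is standard model theory.
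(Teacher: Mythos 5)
First, a remark on the comparison you were asked to make: the paper does not prove this statement at all --- it is imported as Theorems~1.1 and 1.2 of \cite{darn-junk-2018} --- so you are supplying a proof where the paper only gives a citation. Your first paragraph (both axioms are $\forall\exists$-sentences with quantifier-free matrix, so it suffices to realise the witnesses in \emph{some} extension of an arbitrary $A$) is correct, and your Density construction is essentially right: with $D=\widehat c\setminus\widehat a$ one indeed has $D\downarrow=X\setminus\widehat a\neq\emptyset$, the order you put on $Y=X\sqcup\widetilde D$ is a partial order making $Y$ an Esakia space, the collapse is a surjective continuous p-morphism, and the clopen up-set $b=\widehat c$ (the copy inside $X$) satisfies $c\gg b\gg a$ and $b\neq\UN$; all the checks you defer do go through.

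The gap is in the Splitting half, and it is not mere bookkeeping. The surgery there is \emph{not} analogous, and the natural reading of ``duplicate $E=(\widehat{b_1}\cap\widehat{b_2})\setminus\widehat a$ into $E_1,E_2$ and keep the preimage order except between the two copies'' fails outright. Test it on $X=\{u<f\}$ with $\widehat{b_1}=X$, $\widehat{b_2}=\{f\}$, $\widehat a=\emptyset$ (so $b_1\meet b_2\gg a=\ZERO\neq\UN$ and $E=\{f\}$): the doubled space is $Y=\{u\}\cup\{f_1,f_2\}$ with $u<f_1$, $u<f_2$. Since $a_1\join a_2=b_1\join b_2$ and $a_2\leq b_2$ force $u\in A_1$, and $A_1$ must be an up-set, one gets $A_1=Y=\UN$: this space admits \emph{no} admissible witnesses at all. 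The correct surgery must also sever the order relations between each copy $E_i$ and $\widehat{b_{3-i}}\setminus\widehat{b_i}$ (here, delete $u<f_2$), after which one can take $a_i$ to be the up-set $\widehat a\cup(\widehat{b_i}\setminus\widehat{b_{3-i}})\cup E_i$ and verify the three identities, the Esakia axioms and the p-morphism property. None of these choices --- which relations to cut, and what the witnesses actually are --- is recoverable from your sketch, so as written the Splitting case is not proved.
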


\begin{theorem}\label{th:DS-embed}
  Let $H$ be a Heyting algebra having the
  Density and 
  Splitting properties. Let $A$ be a finite
  Heyting subalgebra of $H$, and $B$ an extension of $A$. If $B$ is
  finite then $B$ embeds into $H$ over\footnote{With other words,
    there is an $\LHA$\--embedding of $B$ into $H$ which fixes $A$
  pointwise.} $A$. 
\end{theorem}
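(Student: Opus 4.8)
The plan is to pass to Esakia duality. By Stone--Esakia duality the inclusions $A\hookrightarrow H$ and $A\hookrightarrow B$ dualise to surjective continuous p-morphisms $f\colon\ES H\to\ES A$ and $\rho\colon\ES B\to\ES A$, with $\ES A$ and $\ES B$ \emph{finite} posets. Since $\ES B$ is finite, every up-set of $\ES B$ is clopen and is an element of $B$, and the preimage of such an up-set under a continuous p-morphism is a clopen up-set of $\ES H$, hence an element of $H$. So it suffices to construct a \emph{surjective} continuous p-morphism $h\colon\ES H\to\ES B$ with $\rho\circ h=f$: then $U\mapsto h^{-1}(U)$ is an $\LHA$-morphism $B\to H$, injective because $h$ is onto, and the identity $\rho\circ h=f$ says exactly that this morphism agrees with the inclusion on $A$, i.e.\ that it fixes $A$ pointwise.

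To build $h$, first reduce $\rho$ to an elementary step. A surjective p-morphism between finite posets factors as a finite tower
\begin{displaymath}
  \ES B=P_n\xrightarrow{\rho_n}P_{n-1}\to\cdots\to P_1\xrightarrow{\rho_1}P_0=\ES A
\end{displaymath}
of surjective p-morphisms in which each $\rho_i$ is \emph{minimal}, i.e.\ does not factor through a strictly smaller poset. It then suffices, by downward induction, to solve the one-step lifting problem: given finite posets $P'\xrightarrow{\sigma}P$ with $\sigma$ a minimal surjective p-morphism and a surjective continuous p-morphism $g\colon\ES H\to P$, produce a surjective continuous p-morphism $g'\colon\ES H\to P'$ with $\sigma\circ g'=g$. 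Applying this successively to $\rho_n,\dots,\rho_1$, starting from $g=f$, yields $h$.

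The remaining work is a case analysis on the shape of a minimal step $\sigma\colon P'\to P$, that is, on how the fibre of $\sigma$ over some $q\in P$ fails to be a singleton. Several of these cases are degenerate (the new point becomes a new top or bottom of a fibre, or sits incomparable to part of it) and can be lifted directly. The two substantial cases are precisely the ones the axioms of \cite{darn-junk-2018} were tailored for. In the first, the fibre over $q$ receives a new point falling strictly between two $\gg$-comparable points of the old fibre; for a suitable choice of elements $a,c$ of $H$ built from $g$ and the fibre data, lifting it is an instance of the \textbf{Density} axiom $c\gg a\neq\UN\Rightarrow(\exists b\neq\UN)\,c\gg b\gg a$. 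In the second, $q$ is split into two incomparable points whose meet, and whose relation to the points above them, are prescribed; this is, for a suitable choice of $a,b_1,b_2$ in $H$, an instance of the \textbf{Splitting} axiom. In each case one checks that the element(s) of $H$ produced by the axiom glue, over the various points of $P$, into a well-defined map $g'$ which is continuous, order-preserving, satisfies the up-lifting p-morphism condition, lies over $g$, and is onto (the last because $g$ is onto).

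I expect the main obstacle to be exactly this dictionary: choosing, for each minimal combinatorial step, the right parameters in $H$ to feed to the Density or Splitting axiom so that its output lands in the intended fibre, and then verifying that the resulting global map really is an Esakia morphism onto $P'$ and that the whole tower of liftings stays coherent. The duality set-up and the reduction to minimal steps are routine; the force of the hypotheses on $H$ is entirely concentrated in the two substantial cases.
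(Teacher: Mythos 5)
First, a point of reference: the paper does not prove Theorem~\ref{th:DS-embed} at all --- it is imported verbatim as Theorem~1.2 of \cite{darn-junk-2018} (see the sentence ``The two next properties are theorems~1.1 and 1.2 in \cite{darn-junk-2018}''). So there is no in-paper proof to compare against; your proposal has to stand on its own. Its frame is sound and is indeed essentially the strategy of the cited reference (carried out there for co-Heyting algebras, i.e.\ with the order reversed): dualise to finite posets, observe that an embedding of $B$ into $H$ over $A$ is the same thing as a surjective Esakia morphism $\ES H\to\ES B$ lifting $f$ through $\rho$, factor $A\subseteq B$ into minimal proper extensions, and realise each minimal step using Density or Splitting.

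The gap is that everything after the reduction is asserted rather than proved, and it is precisely there that the theorem lives. Two things in particular are missing. (1) The classification of minimal surjective p-morphisms between finite posets (equivalently, of minimal finite extensions of a finite Heyting algebra) is itself a nontrivial lemma; you invoke it implicitly when you say the case analysis splits into ``two substantial cases'' plus ``degenerate'' ones, but you neither state nor justify the list of cases. Moreover the claim that the degenerate cases ``can be lifted directly'' is not right as stated: \emph{every} minimal step enlarges $P$ by at least one point, so every step forces you to manufacture a genuinely new element of $H$ (a new nonempty clopen fibre), and in general this is impossible without some axiom --- e.g.\ a nontrivial Boolean algebra admits no lift adding a point above an existing one, which is exactly why Density fails there. (2) Surjectivity of the lifted map $g'$ does not follow ``because $g$ is onto'': the fibre of $g'$ over the new point of $P'$ has no reason to be nonempty a priori; nonemptiness is exactly the content of the conclusions $b\neq\UN$ (resp.\ $a_1,a_2\neq\UN$) of the Density and Splitting axioms, and checking that the elements they produce satisfy all the order and p-morphism constraints of the intended fibre is the entire technical burden of the proof. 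As it stands the proposal is a correct plan with the load-bearing verification explicitly deferred, so it cannot be accepted as a proof; for the executed version see the proof of Theorem~1.2 in \cite{darn-junk-2018}.
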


In particular, if $H$ is a non-trivial Heyting algebra (that is
$\ZERO\neq\UN$ in $H$) having the Density and Splitting properties, then
every finite non-trivial Heyting algebra embeds into $H$: it
suffices to apply Theorem~\ref{th:DS-embed} to $H$, $B$ and
$A=\{\ZERO,\UN\}$. Using the model-theoretic Compactness Theorem, the
next property follows immediately. 

\begin{corollary}\label{co:DS-sat-embed}
  Let $H$ be a non-trivial Heyting algebra which has the Density and
  Splitting properties. If $H$ is $\omega$\--saturated then every every
  finitely generated non-trivial Heyting algebra embeds into $H$.
\end{corollary}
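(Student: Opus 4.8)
The plan is to reduce the embedding of a finitely generated non-trivial Heyting algebra $B$ to the finite case handled by Theorem~\ref{th:DS-embed}, using $\omega$\--saturation to cut the (possibly infinite) algebra $B$ down to finitely many conditions, and the Finite Extension Property to realise each such finite condition inside a finite Heyting algebra. Fix a finite tuple $\bar b$ generating $B$, and let $p(\bar x)$ be the quantifier-free type of $\bar b$ over the empty set, that is, the set of quantifier-free $\LHA$\--formulas $\varphi(\bar x)$ such that $B\models\varphi(\bar b)$. Since every element of $B$ is a term in $\bar b$, an embedding of $B$ into $H$ is completely determined by the images of $\bar b$; conversely, any realisation $\bar c$ of $p$ in $H$ gives a well-defined $\LHA$\--embedding $\tau(\bar b)\mapsto\tau(\bar c)$ of $B$ into $H$, the well-definedness and the injectivity following respectively from the equations and the negated equations contained in $p$. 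As $\bar x$ is finite and $H$ is $\omega$\--saturated, it is thus enough to prove that $p$ is finitely satisfiable in $H$.

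A finite subset of $p$ is implied by a single quantifier-free formula true of $\bar b$ in $B$; putting this formula in disjunctive normal form, keeping a disjunct satisfied by $\bar b$, and rewriting each equation $u=v$ as $(u\to v)\meet(v\to u)=\UN$ (and each negated equation accordingly), we are reduced to a system
\begin{displaymath}
  D(\bar x) := \Big[\,t(\bar x)=\UN\land\lland_{k\leq\kappa}s_k(\bar x)\neq\UN\,\Big]
\end{displaymath}
with $B\models D(\bar b)$, and hence $B\models\exists\bar x\,D(\bar x)$. Since $B$ is non-trivial it contains the two-element Heyting algebra $\{\ZERO,\UN\}$, and since the variety of all Heyting algebras has the Finite Model Property, the Finite Extension Property (Theorem~\ref{th:FEP-intro}) yields a \emph{finite} Heyting algebra $B'\supseteq\{\ZERO,\UN\}$, hence non-trivial, together with a tuple $\bar b'$ in $B'$ such that $B'\models D(\bar b')$.

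Because $H$ is non-trivial, $\{\ZERO,\UN\}$ is a finite Heyting subalgebra of $H$, and $B'$ is a finite extension of it; Theorem~\ref{th:DS-embed} therefore gives an $\LHA$\--embedding of $B'$ into $H$. The image of $\bar b'$ under it is a tuple $\bar c$ of $H$ with $H\models D(\bar c)$, since $\LHA$\--embeddings preserve terms, equations and negated equations. Thus every finite subset of $p$ is realised in $H$, so $p$ is finitely satisfiable in $H$, and by $\omega$\--saturation $p$ is realised in $H$. By the first paragraph this realisation extends to an $\LHA$\--embedding of $B$ into $H$, as desired.

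The only step requiring genuine input is the passage from the finitely generated --- but possibly infinite --- algebra $B$ to a finite Heyting algebra satisfying the same existential system $\exists\bar x\,D(\bar x)$; this is precisely what the Finite Extension Property provides, and it is the reason the Finite Model Property enters the argument. The remaining ingredients are the routine bookkeeping with quantifier-free types, Theorem~\ref{th:DS-embed}, and the definition of $\omega$\--saturation.
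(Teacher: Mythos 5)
Your proof is correct and follows essentially the route the paper intends: embed the two-element algebra, pass from finite fragments of the quantifier-free type of the generators to finite non-trivial algebras via the Finite Extension Property, embed those into $H$ by Theorem~\ref{th:DS-embed} with $A=\{\ZERO,\UN\}$, and conclude by $\omega$-saturation. The paper only sketches this ("apply Theorem~\ref{th:DS-embed}\dots{} and use the Compactness Theorem"), and your write-up supplies exactly the missing detail, namely that the Finite Model Property is what makes each finite fragment realisable in a \emph{finite} non-trivial algebra.
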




\paragraph{The Open Mapping Theorem.}
Following \cite{gool-regg-2018}, for every integer $d\geq0$ and every
$x,y\in\ESFbar p$ we write $x \sim_d y$ if: for every IPC formula $t(\bar
p)$ of degree $\leq d$, $t(\bar p)\in x \iff t(\bar p)\in y$. The relations
$\sim_d$ define on $\ESFbar p$ an ultrametric distance 
\begin{displaymath}
  \ddbar p(x,y)=2^{-\min\{d\tq x \not\sim_d y\}},
\end{displaymath}
with the conventions that $\min\emptyset=+\infty$ and $2^{-\infty}=0$. The topology
induced on $\ESFbar p$ by this distance is exactly the patch
topology (Lemma~8 in \cite{gool-regg-2018}). We let
\begin{displaymath}
  B_{\bar p}(x,2^{-d})=\big\{y\in\ESFbar p\tq \ddbar p(x,y)<2^{-d}\big\}
\end{displaymath}
denote the {\df ball of center $x$ and radius $2^{-d}$}. Note that
it is clopen and that
\begin{displaymath}
  y\in B_{\bar p}(x,2^{-d}) \iff y\sim_d x.
\end{displaymath}
So the balls of radius $2^{-d}$ in $\ESFbar p$ are the equivalence
classes of $\sim_d$. Recall that, up to intuitionist equivalence, there
are only finitely many IPC formulas in $\bar p$ of degree $<d$. In
particular, for every ball $B$ of radius $2^{-d}$ there is a pair
$(\varphi_B,\psi_B)$ of IPC formulas of degree $<d$ such that 
\begin{displaymath}
  B=\oc\varphi_B\fc\setminus\oc\psi_B\fc. 
\end{displaymath}
We let $\cB_d(\bar p)$ be the set of these balls, and fix for each
of them a pair $(\varphi_B,\psi_B)$ as above. 

\begin{claim}\label{cl:phiB-y-Ker}
  Let $\bar a$ be an $l$\--tuple in some Heyting algebra $A$. For
  every ball $B$ in $\ESFbar p$, there is $y\in B$ such that
  $\fKer\pi_{\bar a}\subseteq y$ if and only if $\varphi_B(\bar a)\to\psi_B(\bar a)\neq\UN$.
  
\end{claim}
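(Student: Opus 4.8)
The statement asks us to characterize, for a fixed ball $B\subseteq\ESFbar p$ and a fixed tuple $\bar a$ in $A$, when $B$ meets the closed set $\{y\in\ESFbar p\tq\fKer\pi_{\bar a}\subseteq y\}$. The plan is to translate both sides into statements about the canonical morphism $\pi_{\bar a}:\Freebar p\to A$ and its filter kernel, using the pair $(\varphi_B,\psi_B)$ of IPC formulas of degree $<d$ that represents $B$ as $\oc\varphi_B\fc\setminus\oc\psi_B\fc$. First I would recall from Fact~\ref{fa:radical} that $\fKer\pi_{\bar a}$ is the intersection of the prime filters of $\Freebar p$ containing it, so that asking for some $y\in B$ with $\fKer\pi_{\bar a}\subseteq y$ is asking whether $B$ (as a Zariski-closed-by-complement, i.e.\ patch-clopen, subset) has nonempty intersection with the Zariski-closed set $\Spec^\uparrow(\Freebar p)$ cut out by $\fKer\pi_{\bar a}$.

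**Rewriting each side.**

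For the right-hand side: $\varphi_B(\bar a)\to\psi_B(\bar a)\neq\UN$ in $A$ means, via Stone duality ($a\mapsto P^\uparrow(a)$ is an $\LHA$-embedding), that there is a prime filter $x$ of $A$ with $\varphi_B(\bar a)\to\psi_B(\bar a)\notin x$, equivalently (by the defining property of $\to$ recalled in the prerequisites) some $z\geq x$ in $\Spec^\uparrow A$ with $\varphi_B(\bar a)\in z$ and $\psi_B(\bar a)\notin z$. For the left-hand side, I would use Fact~\ref{fa:ideal-quo}: since $\pi_{\bar a}$ need not be surjective, I would first factor it as $\Freebar p\twoheadrightarrow\Freebar p/\fKer\pi_{\bar a}\hookrightarrow A$ (Fact~\ref{fa:facto-morph}), so that prime filters $y$ of $\Freebar p$ containing $\fKer\pi_{\bar a}$ correspond exactly (via $\pi_{\bar a}^{\uparrow}$, i.e.\ $y=\pi_{\bar a}^{-1}(z)$) to prime filters $z$ of $A$. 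Under this correspondence, $\varphi_B(\bar p)\in y$ iff $\varphi_B(\bar a)\in z$ and $\psi_B(\bar p)\notin y$ iff $\psi_B(\bar a)\notin z$, so "$y\in B=\oc\varphi_B\fc\setminus\oc\psi_B\fc$ with $\fKer\pi_{\bar a}\subseteq y$" becomes "there is a prime filter $z$ of $A$ with $\varphi_B(\bar a)\in z$ and $\psi_B(\bar a)\notin z$."

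**The remaining gap.**

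At this point the two reformulated conditions differ only in that the right-hand side allows $z\geq x$ for some prime $x$ of $A$, whereas the left-hand side requires $z$ itself to be prime in $A$ — but in a distributive lattice every prime filter is above itself, and conversely the $z\geq x$ produced from $\varphi_B(\bar a)\to\psi_B(\bar a)\notin x$ may itself be taken prime (extend the filter generated by $x\cup\{\varphi_B(\bar a)\}$ to a prime filter not containing $\psi_B(\bar a)$; such an extension exists precisely because $\varphi_B(\bar a)\to\psi_B(\bar a)\notin x$ guarantees $\psi_B(\bar a)$ is not in the filter generated by $x\cup\{\varphi_B(\bar a)\}$). So the two sides coincide, and the claim follows. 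The main obstacle I anticipate is bookkeeping the non-surjectivity of $\pi_{\bar a}$ cleanly — getting the bijection of Fact~\ref{fa:ideal-quo} to talk about the right prime filters and checking that membership of $\varphi_B,\psi_B$ is preserved in both directions — together with the small prime-filter extension argument in the last step; neither is deep, but the direction "$\varphi_B(\bar a)\to\psi_B(\bar a)\neq\UN$ $\Rightarrow$ existence of $y$" is where one must be careful to actually produce a prime filter of $A$, not merely an up-set.
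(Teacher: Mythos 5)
Your proof is correct and follows essentially the same route as the paper's: translate the existence of $y\in B$ containing $\fKer\pi_{\bar a}$ into the existence of a prime filter of $A$ containing $\varphi_B(\bar a)$ but not $\psi_B(\bar a)$ (via Fact~\ref{fa:ideal-quo}), and identify the latter with $\varphi_B(\bar a)\nleq\psi_B(\bar a)$, i.e.\ $\varphi_B(\bar a)\to\psi_B(\bar a)\neq\UN$, by Stone duality. The only cosmetic difference is that the paper sidesteps your non-surjectivity bookkeeping by replacing $A$ at the outset with the subalgebra generated by $\bar a$, and it gets $\varphi_B(\bar a)\nleq\psi_B(\bar a)$ directly from the prime filter separation theorem rather than through the $\to$-characterisation and a prime-filter extension.
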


\begin{proof}
  Replacing $A$ if necessary by the Heyting algebra generated in $A$ by
  $\bar a$, we can assume that $\pi_{\bar a}$ is surjective. 
  Note first that for every IPC formula $\theta(\bar b)$ and every
  $y\in\ESFbar p$ containing $\fKer\pi_{\bar a}$,
  \begin{displaymath}
    \theta(\bar p)\in y\iff \theta(\bar a)\in\pi_{\bar a}(y).
  \end{displaymath}
  In particular, for every ball $B=\oc\varphi_B\fc\setminus\oc\psi_B\fc$ in $\ESFbar
  p$, there is $y\in B$ which contains $\fKer\pi_{\bar a}$ if and only
  if there is $y_A\in\ES A$ such that $\varphi_B(\bar a)\in y_A$ and $\psi_B(\bar
  a)\notin y_A$ (here we use Fact~\ref{fa:ideal-quo} and the assumption
  that $\pi_{\bar a}$ is surjective). By Stone's duality, the latter
  is equivalent to $\varphi_B(\bar a)\nleq\psi_B(\bar a)$, that is $\varphi_B(\bar
  a)\to\psi_B(\bar a)\neq\UN$. 

  With other words, $\varphi_B(\bar a)\to\psi_B(\bar a)=\UN$ if and only if
  there is no $y\in B$ containing $\fKer\pi_{\bar a}$, which proves our
  claim.
\end{proof}

The main result of \cite{gool-regg-2018} is that, for every pair of
finitely presented Heyting algebras $A$, $B$ and every
$\LHA$\--morphism $f:A\to B$, the dual map $f^{\uparrow}:\ES B\to \ES A$ is an
open map. We will focus on the special case where $f$ is the natural
inclusion from $A=\Freebar p$, to $B=\Free{\bar p,q}$, with $\bar p$
an $l$\--tuple of variables an $q$ a new variable.
Then $f^\uparrow(x)=x\cap\Freebar p$, for every $x\in\ESF{\bar p,q}$. In that
case, by compactness of $\ESFbar p$ the Open Mapping Theorem rephrases
as the following property. 

\begin{theorem}[Open Mapping for $\ESFbar p$]\label{th:RG}
  For every integer $d\geq0$, there is an integer
  $R_{l,d}\geq0$ such that 
  \begin{align*}
    \forall y\in \ESFbar p,\, & \forall x_0\in \ESF{\bar p,q}\,\Big[
      \ddbar p\big(y,x_0\cap\Freebar p\big)< 2^{-R_{l,d}} \nonumber \\
    & \Longrightarrow \exists x\in \ESF{\bar p,q}\,
      \Big(x\cap\Freebar p = y
        \mbox{ and }\dd{\bar p,q}(x,x_0)< 2^{-d}
      \Big)\Big]
  \end{align*}
\end{theorem}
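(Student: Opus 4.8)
The idea is to unwind the Open Mapping Theorem from \cite{gool-regg-2018} — applied to the inclusion $f:\Freebar p\hookrightarrow\Free{\bar p,q}$, whose dual is $f^\uparrow(x)=x\cap\Freebar p$ — and to use compactness of $\ESFbar p$ (in the patch topology) to turn the open map statement into a uniform statement with a single modulus $R_{l,d}$ depending only on $l$ and $d$.

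First I would fix $x_0\in\ESF{\bar p,q}$ and look at the point $y_0:=f^\uparrow(x_0)=x_0\cap\Freebar p\in\ESFbar p$. Since $f^\uparrow$ is an open map, the image $f^\uparrow\big(B_{\bar p,q}(x_0,2^{-d})\big)$ is an open neighbourhood of $y_0$ in $\ESFbar p$. Now $B_{\bar p,q}(x_0,2^{-d})$ is clopen (as recalled in the excerpt), and its $f^\uparrow$-image is then open; because the metric balls $B_{\bar p}(\cdot,2^{-n})$ form a neighbourhood basis at every point (the metric topology equals the patch topology, Lemma~8 of \cite{gool-regg-2018}), there is an integer $n=n(x_0)\geq0$ with $B_{\bar p}(y_0,2^{-n})\subseteq f^\uparrow\big(B_{\bar p,q}(x_0,2^{-d})\big)$. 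Spelling this out: for every $y$ with $\ddbar p(y,y_0)<2^{-n}$ there is $x\in\ESF{\bar p,q}$ with $x\cap\Freebar p=y$ and $\dd{\bar p,q}(x,x_0)<2^{-d}$. That is exactly the desired implication, but with a modulus $n(x_0)$ that a priori depends on $x_0$.

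The second and main step is to make the modulus uniform. Here is where I would use compactness. The integer $n(x_0)$ only constrains $x_0$ through finitely much data: whether $x$ with $x\cap\Freebar p=y$ can be found within $B_{\bar p,q}(x_0,2^{-d})$ depends only on the ball $B_{\bar p,q}(x_0,2^{-d})$, i.e.\ on the $\sim_d$-class of $x_0$, of which there are only finitely many (up to intuitionist equivalence there are only finitely many IPC formulas in $(\bar p,q)$ of degree $<d$). So the function $x_0\mapsto(\text{smallest admissible }n)$ factors through the finite set $\cB_d(\bar p,q)$ of balls of radius $2^{-d}$ in $\ESF{\bar p,q}$, and we may take $R_{l,d}$ to be the maximum of the finitely many values $n(B)$, $B\in\cB_d(\bar p,q)$. (Alternatively, phrase the same finiteness as compactness of $\ESF{\bar p,q}$ covered by the clopen balls $B_{\bar p,q}(x_0,2^{-d})$ and extract a finite subcover, then take the max of the associated $n$'s; and note that $\ESFbar p$ being compact guarantees the balls really are a basis, with no issue at the boundary.)

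The main obstacle is the uniformity argument — verifying honestly that the admissible modulus depends on $x_0$ only through its $\sim_d$-class, and not, say, through $y_0=x_0\cap\Freebar p$ in an unbounded way. Once one notices that $x\cap\Freebar p=y$ together with $x\sim_d x_0$ is a condition on $x$ that only refers to $y$ and to the IPC formulas of degree $\leq d$ true at $x_0$, this becomes a routine finiteness bookkeeping; but it is the step that actually produces a single $R_{l,d}$ rather than a pointwise bound, and it is where the compactness of the Esakia spaces is genuinely used. The rest is just translating ``$f^\uparrow$ open'' into the $\varepsilon$–$\delta$ language attached to the ultrametrics $\ddbar p$ and $\dd{\bar p,q}$.
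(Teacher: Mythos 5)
Your proof is essentially correct in its conclusion, but it takes the opposite route from the paper, and this matters here. You deduce the uniform statement from the van Gool--Reggio Open Mapping Theorem (Theorem~\ref{th:G-R}), cited as known, plus a compactness/uniformity argument. The paper never invokes \cite{gool-regg-2018} at this point: its proof (Theorem~\ref{th:OMT}) goes through Pitts' quantifier elimination. The prime model $\HH$ of Section~\ref{se:prime-model} has the QE property, so the indices $h_{l,d}$ are finite and every existentially closed Heyting algebra $H$ satisfies $h_{l,d}(H)=h_{l,d}$ (Theorem~\ref{th:axiom-mod-comp}); combining this with Lemma~\ref{le:zero-Ker-Esa}, which translates solvability of the system attached to the ball $B=\oc\varphi_B\fc\setminus\oc\psi_B\fc$ into the existence of a point $x\in\ESF{\bar p,q}$ lying over a prescribed $y$, yields the statement with the explicit modulus $R_{l,d}=h_{l,d}$. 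Since the declared purpose of Section~\ref{se:OMT} is precisely to show that the Open Mapping Theorem \emph{follows from} Pitts' result, your argument is circular relative to the paper's architecture: it assumes the theorem being re-derived. As a standalone proof of the displayed statement it is legitimate, but it buys nothing new, whereas the paper's route buys an independent proof and a model-theoretically meaningful value of $R_{l,d}$.

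There is also one step in your uniformity argument that fails as written. You claim that $x_0\mapsto n(x_0)$ (the smallest admissible modulus) factors through the finite set $\cB_d(\bar p,q)$ of $\sim_d$-classes. It does not: $n(x_0)$ is the least $n$ with $B_{\bar p}(y_0,2^{-n})\subseteq f^\uparrow(B)$, and this depends on $y_0=x_0\cap\Freebar p$ and not only on the class $B$ containing $x_0$, since the open set $f^\uparrow(B)$ may admit large balls around some of its points and only small balls around others. What you actually need is that $\sup_{x_0\in B}n(x_0)<\infty$ for each of the finitely many $B$, and this requires compactness of $B$ itself: $B$ is clopen, hence compact, so $f^\uparrow(B)$ is open (by Theorem~\ref{th:G-R}) and compact, hence clopen in $\ESFbar p$; covering $f^\uparrow(B)$ by balls contained in it and extracting a finite subcover exhibits it as a union of balls of a single radius $2^{-n(B)}$, and $R_{l,d}=\max_{B\in\cB_d(\bar p,q)}n(B)$ then works. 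Your parenthetical alternative does not repair this either, because taking a finite subcover of $\ESF{\bar p,q}$ by $d$-balls only recovers the finiteness of $\cB_d(\bar p,q)$, which by itself does not bound $n(x_0)$ uniformly for $x_0$ ranging inside one class.
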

It is actually from this more precise statement that the Open Mapping
Theorem for general finitely presented Heyting algebras is derived in
\cite{gool-regg-2018}. And it is this one that we will prove in
Section~\ref{se:OMT}.

\section{The prime existentially closed model}
\label{se:prime-model}

We are going to amalgamate a set of representatives of all the finite
Heyting algebras in a countable, locally finite Heyting algebra $\HH$
which will prove to be the smallest non-trivial existentially closed Heyting
algebra (up to isomorphism), and the only one which is both countable
and locally finite. The construction goes by induction.

\paragraph{Construction of $\HH$.}

Let $\widetilde{H}$ be a non-trivial existentially closed Heyting algebra. Let
$H_0$ be the two-element Heyting algebra contained in $\widetilde{H}$. Assume
that a finite Heyting algebra $H_n\subseteq\widetilde{H}$ has been constructed for
some $n\geq0$. Let $A_1,\dots,A_\alpha$ be the list of all the subalgebras of
$H_n$ with $\leq n$ join-irreducible elements. For each $i\leq\alpha$ let
$B_{i,1},\dots,B_{i,\beta_i}$ be a set of representatives of the minimal
proper finite extensions of $A_i$ (up to $\LHA$\--isomorphism over
$A_i$). Let $H_{n,1,1}=H_n$ and assume that for some $i_0\leq\alpha$ and
$j_0\leq\beta_{i_0}$, we have constructed a finite Heyting algebra
$H_{n,i_0,j_0}\subseteq\widetilde{H}$ containing: firstly $H_n$; secondly, for every
$i< i_0$ and $j\leq j_i$ a copy of $B_{i,j}$ containing $A_i$; and
thirdly, for every $j<j_0$ a copy of $B_{i_0,j}$ containing
$A_{i_0}$. This assumption is obviously true if $i_0=j_0=1$. In the
induction step we distinguish three cases. \medskip

{\it Case 1}: $j_0<\beta_{i_0}$. It is well known that the variety of
Heyting algebras has the Finite Amalgamation Property, that is every
pair of finite extensions of a given Heyting algebra $L$ amalgamate
over $L$ in a finite Heyting algebra. In particular, there is a
finite Heyting algebra $C$ containing both $H_{n,i_0,j_0}$ and a
copy $B$ of $B_{i_0,j_0+1}$. By Theorem~\ref{th:E-clos-DS}, $H$
satisfies the Spitting and Density axioms. So by
Theorem~\ref{th:DS-embed}, there is an $\LHA$\--embedding $\varphi:C\to H$
which fixes $H_{n,i_0,j_0}$ pointwise. We let $H_{n,i_0,j_0+1}$ be
the Heyting algebra generated in $H$ by the union of $H_{n,i_0,j_0}$
and $\varphi(B)$. This is a finite Heyting algebra containing $H_n$ and,
for every $i< i_0$ and $j\leq j_i$ a copy of $B_{i,j}$ containing
$A_i$, and for every $j<j_0+1$ a copy of $B_{i_0,j}$ containing
$A_{i_0}$ (namely $\varphi(B_{i_0,j_0+1}$ for $j=j_0+1$, the other cases
being covered by the induction hypothesis because $H_{n,i_0,j_0+1}$
contains $H_{n,i_0,j_0}$).
\medskip

{\it Case 2}: $j_0=\beta_{i_0}$ and $i_0<\alpha$. By the Finite Amalgamation Property
again, there is a finite Heyting algebra $C$ containing both
$H_{n,i_0,j_0}$ and a copy $B$ of $B_{i_0+1,1}$.
Theorems~\ref{th:E-clos-DS} and \ref{th:DS-embed} then give an
$\LHA$\--embedding $\varphi:C\to\widetilde{H}$ which fixes $H_{n,i_0,j_0}$ pointwise.
We let $H_{n,i_0+1,1}$ be the Heyting algebra generated in $\widetilde{H}$ by
the union of $H_{n,i_0,j_0}$ and $\varphi(B)$. This is a finite Heyting
algebra containing $H_n$ and, for every $i< i_0+1$ and $j\leq j_i$ a
copy of $B_{i,j}$ containing $A_i$, and a copy of $B_{i_0,1}$
containing $A_{i_0}$. 
\medskip

{\it Case 3}: $j_0=\beta_{i_0}$ and $i_0=\alpha$. We let
$H_{n+1}=H_{n,\alpha,\beta_\alpha}$. 
\medskip

Repeating this construction, we get an increasing chain of finite
Heyting algebras $H_n$ contained in $\widetilde{H}$. We let $\HH=\bigcup_{n\in\NN}H_n$.
Since every $H_n$ is finite, $\HH$ is a countable and locally finite
Heyting algebra. 

\begin{proposition}\label{pr:H0-DS}
  Given a Heyting algebra $H$, the following properties are
  equivalent. 
  \begin{enumerate}
    \item\label{it:H0-embed}
      $H$ is countable, locally finite, and every finite
      extension $B$ of a subalgebra $A$ of $H$ embeds into $H$ over
      $A$.
    \item\label{it:H0-DS}
      $H$ is countable, locally finite, and has the Density
      and Splitting properties. 
    \item\label{it:H0-EC}
      $H$ is countable, locally finite and existentially closed.
    \item\label{it:H0-isom}
      $H$ is $\LHA$\--isomorphic to $\HH$. 
  \end{enumerate}
\end{proposition}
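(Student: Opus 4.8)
The plan is to prove the equivalences by a cycle of implications, say \ref{it:H0-embed} $\Rightarrow$ \ref{it:H0-DS} $\Rightarrow$ \ref{it:H0-EC} $\Rightarrow$ \ref{it:H0-embed}, together with a separate loop establishing \ref{it:H0-isom}, most naturally by showing \ref{it:H0-embed} $\Leftrightarrow$ \ref{it:H0-isom} via a back-and-forth argument and then invoking the already-established equivalence of the first three with the fourth. Several arrows are cheap. For \ref{it:H0-EC} $\Rightarrow$ \ref{it:H0-DS}: an existentially closed Heyting algebra has the Density and Splitting properties by Theorem~\ref{th:E-clos-DS}, so one only has to carry along ``countable and locally finite''. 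For \ref{it:H0-DS} $\Rightarrow$ \ref{it:H0-embed}: if $A$ is a finite subalgebra of $H$ and $B\supseteq A$ is a finite extension, then Theorem~\ref{th:DS-embed} gives an embedding of $B$ into $H$ over $A$; the only thing to check is that the hypothesis in \ref{it:H0-embed} is genuinely about \emph{finite} subalgebras $A$ and finite extensions $B$ — which is exactly what Theorem~\ref{th:DS-embed} delivers. (If \ref{it:H0-embed} is meant to quantify over arbitrary subalgebras $A$, one first notes $A$ is then a directed union of finite subalgebras by local finiteness and reduces to the finite case, or one restricts to $B$ finitely generated over $A$ and uses that $B$ is then finite.)

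The implication \ref{it:H0-embed} $\Rightarrow$ \ref{it:H0-EC} is the one requiring real content, and it is where I expect the main obstacle. Let $H$ satisfy \ref{it:H0-embed} and let $\varphi(\bar a)$ be an existential $\LHA$-formula with parameters $\bar a$ in $H$, satisfied in some extension $H'\supseteq H$. Let $A$ be the (finite, by local finiteness) subalgebra of $H$ generated by $\bar a$. By the Finite Extension Property (Theorem~\ref{th:FEP-intro}), applied to the variety of all Heyting algebras, $\varphi(\bar a)$ is already satisfied in some \emph{finite} Heyting algebra $B\supseteq A$. By hypothesis \ref{it:H0-embed}, $B$ embeds into $H$ over $A$, and since $\varphi$ is existential and $A$ (hence $\bar a$) sits inside that copy of $B$ inside $H$, the witness transports into $H$; thus $\varphi(\bar a)$ holds in $H$, and $H$ is existentially closed. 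The delicate point here is simply to invoke the Finite Extension Property correctly (it is exactly tailored for this, as the excerpt notes that it will be used ``only for the variety of all Heyting algebras, and for formulas $\exists\bar q,\,\EQparsol a{\bar q}$''), and to observe that every finite system of equations and negated equations has this shape.

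It remains to connect these with \ref{it:H0-isom}. One direction, \ref{it:H0-isom} $\Rightarrow$ \ref{it:H0-embed}, follows once we know $\HH$ itself satisfies \ref{it:H0-embed}: by construction $\HH=\bigcup_n H_n$ is countable and locally finite, and the amalgamation-and-embedding step in \emph{Case 1}/\emph{Case 2} of the construction shows that every minimal proper finite extension $B_{i,j}$ of every finite subalgebra $A_i$ of every $H_n$ gets a copy inside $\HH$ over $A_i$; a general finite extension $B$ of a finite $A\subseteq\HH$ is reached by a finite tower of minimal proper extensions, so by iterating we embed $B$ into $\HH$ over $A$ (here one must be a little careful that ``minimal proper extension'' steps suffice to reach any finite extension — a standard fact since the lattice of intermediate subalgebras between $A$ and $B$ is finite). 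For the converse \ref{it:H0-embed} $\Rightarrow$ \ref{it:H0-isom}, run a back-and-forth between two models $H,K$ of \ref{it:H0-embed}: enumerate $H$ and $K$, and at each stage we have a finite subalgebra $A\subseteq H$ and an isomorphism onto a finite subalgebra $A'\subseteq K$; to absorb the next element of $H$, let $B$ be the finite subalgebra of $H$ generated by $A$ and that element, transport the abstract extension $A'\subseteq B$ (isomorphic to $A\subseteq B$) into $K$ over $A'$ using property \ref{it:H0-embed} for $K$, and extend the isomorphism; symmetrically for $K$. Local finiteness guarantees all the algebras produced along the way are finite, so the process is well-defined, and the union of the chain of partial isomorphisms is an $\LHA$-isomorphism $H\cong K$. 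Applying this with $K=\HH$ gives \ref{it:H0-isom}. The only subtlety in the back-and-forth is the bookkeeping that the partial isomorphisms are compatible and that ``$B$ as an extension of $A$'' really transports, which is routine given Fact~\ref{fa:facto-morph} and the finiteness at each stage.
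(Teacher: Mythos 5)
Your proposal is correct and follows essentially the same route as the paper: the cycle you actually carry out is (\ref{it:H0-EC})$\Rightarrow$(\ref{it:H0-DS})$\Rightarrow$(\ref{it:H0-embed})$\Rightarrow$(\ref{it:H0-EC}) via Theorems~\ref{th:E-clos-DS}, \ref{th:DS-embed} and the Finite Extension Property, plus (\ref{it:H0-isom})$\Leftrightarrow$(\ref{it:H0-embed}) by reading off the construction of $\HH$ and a back-and-forth, exactly as in the paper. (Only your opening sentence announces the cycle in a different order than the implications you then prove, but the argument itself is complete.)
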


\begin{proof}
  (\ref{it:H0-isom})$\Rightarrow$(\ref{it:H0-embed}) We have to prove that
  $\HH$ itself has the properties of item~\ref{it:H0-embed}. We
  already know that $\HH$ is countable and locally finite. Let $B$
  be a finite extension of a subalgebra $A$ of $H$. We want to embed
  $B$ in $H$ over $A$. By an immediate induction we can assume that
  $B$ is a minimal proper finite extension of $A$. Now $A$ is
  contained in $H_n$ for some integer $n\geq0$, which we can assume to
  be greater than the number of join-irreducible elements of $A$. So
  $A$ is one of the algebras $A_1,\dots,A_\alpha$ of the above construction
  of $\HH$, and $B$ is isomorphic over $A$ to some $B_{i,j}$ as
  above. By construction $H_{n+1}$ contains a copy of $B_{i,j}$,
  hence of $B$, which contains $A$. So $B$ embeds over $A$ into
  $H_{n+1}$, hence {\it a fortiori} into $\HH$. 

  (\ref{it:H0-embed})$\Rightarrow$(\ref{it:H0-isom}) Both $H$ and $\HH$ have
  the properties of item~\ref{it:H0-embed}. We construct an
  $\LHA$\--isomorphism by induction. Let $(a_n)_{n\in\NN}$ be an
  enumeration of $H$, and $(b_n)_{n\in\NN}$ an enumeration of $\HH$.
  Assume that for some integer $n\geq0$ we have constructed an
  $\LHA$\--isomorphism $\varphi_n:A_n\to B_n$ where $A_n$ (resp. $B_n)$ is a
  finite subalgebra of $H$ (resp. $\HH$) containing $a_k$ (resp.
  $b_k$) for every $k< n$. This assumption is obviously true for
  $n=0$, with $\varphi_0$ the empty map. Let $k_0$ be the first integer
  $k\geq n$ such that $a_k\notin A_n$. Let $A_{n+1/2}$ be the Heyting
  algebra generated in $H$ by $A_n\cup\{a_{k_0}\}$. Since $H$ is locally
  finite, $A_{n+1/2}$ is a finite extension of $A_n$. Composing
  $\varphi_n^{-1}$ by the inclusion map, we get an embedding of $B_k$ into
  $A_{n+1}$. The assumption (\ref{it:H0-embed}) gives an embedding
  of $A_{n+1}$ into $\HH$ over $B_n$, or equivalently a subalgebra
  $B_{n+1/2}$ of $\HH$ containing $B_n$ and an $\LHA$\--isomorphism
  $\varphi_{n+1/2}:A_{n+1/2}\to B_{n+1/2}$ whose restriction to $A_n$ is
  $\varphi_n$. Now let $l_0$ be the smallest integer $l\geq n$ such that
  $b_l\notin B_{l+1/2}$, and let $B_{l+1}$ be the Heyting algebra
  generated in $\HH$ by $B_{n+1/2}\cup\{b_{l_0}\}$. A symmetric argument
  gives a subalgebra $A_n$ of $H$ containing $A_{n+1/2}$ and an
  $\LHA$\--isomorphism $\psi_{n+1}:B_{n+1}\to A_{n+1}$ whose restriction
  to $A_{n+1/2}$ is $\varphi_{n+1/2}^{-1}$. We then let
  $\varphi_{n+1}=\psi_{n+1}^{-1}$ and let the induction go on. Finally we get
  an isomorphism $\varphi:H\to\HH$ whose restriction to each $A_n$ is $\varphi_n$,
  and we are done. 



  (\ref{it:H0-embed})$\Rightarrow$(\ref{it:H0-EC})
  Let $\varphi(\bar a,\bar q)$ be a quantifier-free $\LHA$\--formula with
  parameters $\bar a\in\HH^l$, such that $\exists\bar q\,\varphi(\bar a,\bar q)$
  is satisfied in some extension of $\HH$. Let $A$ be the Heyting
  algebra generated by $\bar a$. Since $\HH$ is locally finite, $A$
  is finite. By the Finite Extension Property, it follows that
  $\exists\bar q\,\varphi(\bar a,\bar q)$ is satisfied in some finite extension
  $B$ of $A$. By assumption (\ref{it:H0-embed}), $B$ embeds into $\HH$
  over $A$. So $\exists\bar q\,\varphi(\bar a,\bar q)$ is satisfied in $\HH$.

  (\ref{it:H0-EC})$\Rightarrow$(\ref{it:H0-DS})$\Rightarrow$(\ref{it:H0-embed}) is just
  Theorem~\ref{th:E-clos-DS} and Theorem~\ref{th:DS-embed}, which
  finishes the proof. 
\end{proof}

\begin{remark}\label{re:H0-prime}
  We have constructed $\HH$ inside a given non-trivial existentially closed
  Heyting algebra $\widetilde{H}$. The same construction in another
  non-trivial existentially closed Heyting algebra $\widetilde{H}'$ would
  give a Heyting algebra $\HH'$ contained in $\widetilde{H}'$ to which
  Proposition~\ref{pr:H0-DS} would apply as well as to $\HH$. The
  characterisation given there proves that $\HH$ and $\HH'$ are
  isomorphic. As a consequence, every non-trivial existentially closed
  Heyting algebra contains a copy of $\HH$. 
\end{remark}

\section{The QE property}
\label{se:hensel}

Given an integer $n\geq0$ and $l$\--tuples $\bar a$, $\bar a'$ in a
Heyting algebra $A$ let
\begin{displaymath}
  \Th_n(\bar a)=\{\varphi(\bar p)\tq \deg \varphi\leq n\mbox{ and }\varphi(\bar a)=\UN\} 
\end{displaymath}
and
\begin{displaymath}
  Y_n(\bar a)= \big\{y\in\ESFbar p\tq 
    \exists y'\in\ESFbar p,\ y\sim_n y'\mbox{ and }\fKer\pi_{\bar a}\subseteq y'\big\}. 
\end{displaymath}

We say that $\bar a$ and $\bar a'$ are {\df $n$\--similar} if
$\Th_n(\bar a)=\Th_n(\bar a')$, and write it $\bar a\approx_n \bar a'$.
They are {\df $\omega$\--similar} if they are $n$\--similar for
every $n$, or equivalently if $\fKer\pi_{\bar a}=\fKer\pi_{\bar a'}$.
Note that $\bar a$ and $\bar a'$ are $\omega$\--similar if and only if
the function mapping $a_i$ to $a'_i$ for $1\leq i\leq l$, extends to an
$\LHA$\--isomorphism between the Heyting algebras generated by $\bar
a$ and $\bar a'$ in $A$.

\begin{proposition}\label{pr:n-equiv}
  Let $\bar a$, $\bar a'$ be two $l$\--tuples in a Heyting algebra
  $A$. For every integer $n\geq0$ the following assertions are
  equivalent. 
  \begin{enumerate}
    \item\label{it:n-equiv-Th}
      $\Th_n(\bar a)=\Th_n(\bar a')$.
    \item\label{it:n-equiv-Y}
      $Y_n(\bar a)=Y_n(\bar a')$. 
    \item\label{it:n-equiv-B}
      For every ball $B$ in $\ESFbar p$ with radius $2^{-n}$:
      $\varphi_B(\bar a)\leq\psi_B(\bar a)$ if and only if $\varphi_B(\bar
      a')\leq\psi_B(\bar a')$.
  \end{enumerate}
\end{proposition}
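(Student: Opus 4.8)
The plan is to prove the cycle of implications (\ref{it:n-equiv-Th})$\Rightarrow$(\ref{it:n-equiv-B})$\Rightarrow$(\ref{it:n-equiv-Y})$\Rightarrow$(\ref{it:n-equiv-Th}), or perhaps to establish each equivalence against a common reformulation. The key observation underlying everything is that, for a ball $B$ of radius $2^{-n}$ with chosen pair $(\varphi_B,\psi_B)$ of IPC formulas of degree $<n$, Claim~\ref{cl:phiB-y-Ker} tells us that $\varphi_B(\bar a)\to\psi_B(\bar a)\neq\UN$ holds if and only if there is $y\in B$ with $\fKer\pi_{\bar a}\subseteq y$; and $\varphi_B(\bar a)\to\psi_B(\bar a)\neq\UN$ is just the negation of $\varphi_B(\bar a)\leq\psi_B(\bar a)$. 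So item~(\ref{it:n-equiv-B}) says precisely that, for each ball $B$ of radius $2^{-n}$, $B$ meets $\{y\tq\fKer\pi_{\bar a}\subseteq y\}$ if and only if it meets $\{y\tq\fKer\pi_{\bar a'}\subseteq y\}$.

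For (\ref{it:n-equiv-B})$\Leftrightarrow$(\ref{it:n-equiv-Y}): recall that the balls of radius $2^{-n}$ are exactly the $\sim_n$\--equivalence classes, and that $Y_n(\bar a)$ is by definition the union (a $\sim_n$\--saturated set, hence a union of balls of radius $2^{-n}$) of all balls $B$ of radius $2^{-n}$ that contain some $y'$ with $\fKer\pi_{\bar a}\subseteq y'$. Thus $B\subseteq Y_n(\bar a)$ exactly when $B$ meets $\{y\tq\fKer\pi_{\bar a}\subseteq y\}$, and $Y_n(\bar a)$ is determined by which balls of radius $2^{-n}$ it contains. Hence $Y_n(\bar a)=Y_n(\bar a')$ iff the two tuples select the same balls, which is exactly item~(\ref{it:n-equiv-B}) via the translation in the previous paragraph. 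For (\ref{it:n-equiv-B})$\Rightarrow$(\ref{it:n-equiv-Th}) and its converse: a formula $\varphi$ of degree $\leq n$ satisfies $\varphi(\bar a)=\UN$ iff $\oc\varphi(\bar p)\fc$ contains every prime filter containing $\fKer\pi_{\bar a}$ (by Fact~\ref{fa:radical} applied to $\fKer\pi_{\bar a}$, together with the observation that $\varphi(\bar a)=\UN$ iff $\varphi(\bar p)\in\fKer\pi_{\bar a}$ iff $\varphi(\bar p)$ lies in every prime filter above $\fKer\pi_{\bar a}$). Now $\oc\varphi(\bar p)\fc$ is clopen in the patch topology and, $\varphi$ having degree $\leq n$, it is $\sim_n$\--saturated — more precisely it is a union of balls of radius $2^{-n}$ (this needs the convention or a small argument comparing degree $\leq n$ with the ``degree $<d$'' bound used to define $\cB_d$; one harmlessly enlarges $n$ to $n{+}1$ or observes that balls of radius $2^{-n}$ refine the partition induced by degree\--$\leq n$ formulas). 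So $\varphi(\bar a)=\UN$ iff $\oc\varphi(\bar p)\fc$ contains no ball of radius $2^{-n}$ meeting $\{y\tq\fKer\pi_{\bar a}\subseteq y\}$, and this condition depends only on the set of balls selected by $\bar a$; hence item~(\ref{it:n-equiv-B}) forces $\Th_n(\bar a)=\Th_n(\bar a')$, and conversely $\Th_n(\bar a)$ determines, for each ball $B=\oc\varphi_B\fc\setminus\oc\psi_B\fc$, whether $\varphi_B(\bar a)\to\psi_B(\bar a)=\UN$, which is item~(\ref{it:n-equiv-B}).

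The main obstacle I expect is bookkeeping with the degree bound: the balls of radius $2^{-n}$ are cut out by formulas of degree $<n$, whereas $\Th_n$ uses formulas of degree $\leq n$, so one must check that both the sets $\oc\varphi(\bar p)\fc$ for $\deg\varphi\leq n$ and the balls of radius $2^{-n}$ live in the same finite Boolean algebra of clopen patch\--sets — equivalently that $\sim_n$ is a coarsening or refinement as needed. The cleanest route is to note that two prime filters are $\sim_n$\--equivalent iff they agree on all formulas of degree $\leq n$ (this is literally the definition of $\sim_d$ with $d=n$ in the paragraph preceding Theorem~\ref{th:RG}), so $\oc\varphi(\bar p)\fc$ is $\sim_n$\--saturated for every $\varphi$ of degree $\leq n$, and a ball of radius $2^{-n}$, being a single $\sim_n$\--class, is contained in or disjoint from each such $\oc\varphi(\bar p)\fc$; that disposes of the issue. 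Everything else is a routine unwinding of definitions together with Claim~\ref{cl:phiB-y-Ker}, Fact~\ref{fa:radical}, and Stone duality.
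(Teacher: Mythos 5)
Your proof is correct and follows essentially the same route as the paper's: the equivalence of items~\ref{it:n-equiv-Y} and \ref{it:n-equiv-B} via Claim~\ref{cl:phiB-y-Ker}, the passage from item~\ref{it:n-equiv-Th} via the observation that $\varphi_B\to\psi_B$ has degree $\leq n$, and the return to item~\ref{it:n-equiv-Th} via Fact~\ref{fa:radical} together with the $\sim_n$\--invariance of degree\--$\leq n$ formulas (the paper packages this last step contrapositively, extracting a witness $\theta\in\Th_n(\bar a')\setminus\Th_n(\bar a)$ and a prime filter separating it, but the content is the same). One sentence should read ``the complement of $\oc\varphi(\bar p)\fc$ contains no ball of radius $2^{-n}$ meeting $\{y\tq\fKer\pi_{\bar a}\subseteq y\}$'' rather than ``$\oc\varphi(\bar p)\fc$ contains no ball meeting\dots''; given the correct conclusion you draw immediately afterwards, this is an evident slip rather than a gap.
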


\begin{proof}
  (\ref{it:n-equiv-Y})$\Leftrightarrow$(\ref{it:n-equiv-B}) Given $y\in \ESFbar p$,
  let $B$ the ball in $\ESFbar p$ with center $y$ and radius
  $2^{-n}$. By Claim~\ref{cl:phiB-y-Ker}, $y\in Y_n(\bar a)$ if and
  only if $\varphi_B(\bar a)\to\psi_B(\bar a)\neq\UN$, that is if and only if
  $\varphi_B(\bar a)\nleq \psi_B(\bar a)$. The equivalence follows.

  (\ref{it:n-equiv-Th})$\Rightarrow$(\ref{it:n-equiv-B}) Given a ball $B$ in
  $\ESFbar p$ with radius $2^{-n}$, $\varphi_B$ and $\psi_B$ have degree $<n$
  hence $\varphi_b\to\psi_B$ have degree $\leq n$. The assumption
  (\ref{it:n-equiv-Th}) then implies that $\varphi_B(\bar a)\to\psi_B(\bar
  a)=\UN$ if and only if $\varphi_B(\bar a')\to\psi_B(\bar a')=\UN$, and
  (\ref{it:n-equiv-B}) follows.

  (\ref{it:n-equiv-B})$\Rightarrow$(\ref{it:n-equiv-Y}) Assume that
  $\Th_n(\bar a)\neq\Th_n(\bar a')$, for example $\Th_(\bar
  a)\not\subseteq\Th_n(\bar a')$, and pick any $\theta(\bar p)$ in $\Th_n(\bar
  a')\setminus\Th_n(\bar a)$. Then $\theta$ belongs to $\fKer\pi_{\bar a'}$
  but not to $\fKer\pi_{\bar a}$. Fact~\ref{fa:radical} then gives
  $y\in\ESFbar p$ such that $\fKer\pi_{\bar a}\subseteq y$ and $\theta\notin y$. In
  particular $y\in Y_n(a)$, but for every $y'\in\ESFbar a$ such that
  $y\sim_n y'$, since $\theta(\bar p)$ has degree $\leq n$  we have
  \begin{displaymath}
    \theta\notin y \Longrightarrow \theta\notin y' \Longrightarrow \fKer\pi_{\bar a'}\not\subseteq y', 
  \end{displaymath}
  so $y\notin Y_n(\bar a')$. 
\end{proof}

\begin{remark}\label{re:EQUIV}
  Item~\ref{it:n-equiv-B} of Proposition~\ref{pr:n-equiv} gives that
  the relation $\approx_n$ for $l$\--tuples in $A$ is definable by the
  $\LHA$\--formula with $2l$ free variables
  \begin{displaymath}
    \EQUIV_{l,n}(\bar p,\bar p')=\lland_{B\in\cB_n(\bar p)}
       \Big[\big(\varphi_B(\bar p)\leq\psi_B(\bar p)\big)
       \Leftrightarrow\big(\varphi_B(\bar p')\leq\psi_B(\bar p')\big)\Big].
  \end{displaymath}
\end{remark}

Now we can introduce the corner stone of our axiomatization of
existentially closed Heyting algebras. Given a Heyting algebra $A$, we
let $h_{l,d}(A)$ be the smallest integer $n$ such that for every $\bar
a,\bar a'\in A^l$ such that $\bar a\approx_n\bar a'$: for every system $\EQpar
p$ of degree $\leq d$, $\EQparsol a q$ has a solution in $A$ if and only
if $\EQparsol{a'} q$ has a solution in $A$. If no such integer exists
we let $h_{l,d}(A)=+\infty$. We call $h_{l,d}(A)$ the {\df $(l,d)$\--index}
of $A$. We say that $A$ has the {\df QE property} if $h_{l,d}(A)$
is finite for every $(l,d)$. The following characterisation motivates
this terminology. 

\begin{example}\label{ex:dense-clos}
  If $T$ is any theory of Heyting algebras which eliminates the
  quantifier in $\LHA$, then every model of $T$ has the QE property by
  Proposition~\ref{pr:QE} below. In particular:
  \begin{itemize}
    \item
      Every dense Boolean algebra has the QE property. 
    \item 
      By Pitt's result every existentially closed Heyting algebra 
      has the QE property.
  \end{itemize}
\end{example}

\begin{proposition}\label{pr:QE}
  A Heyting algebra $A$ has the QE property if and only if the
  complete theory of $A$ in $\LHA$ eliminates the quantifiers. 
\end{proposition}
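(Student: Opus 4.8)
The plan is to prove both implications directly from the definitions, using the fact that $\LHA$ is a finite language so that, up to logical equivalence, there are only finitely many quantifier-free formulas of any bounded complexity, and that (by the standard theory of quantifier elimination) it suffices to eliminate a single existential quantifier over a conjunction of literals.

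For the ``only if'' direction, suppose $A$ has the QE property. The standard criterion for QE in the complete theory of $A$ is: for every $\LHA$-formula $\exists q\,\varphi(\bar p, q)$ with $\varphi$ quantifier-free, there is a quantifier-free $\theta(\bar p)$ such that $A \models \forall \bar p\,\bigl[\exists q\,\varphi(\bar p,q) \leftrightarrow \theta(\bar p)\bigr]$. It is enough to do this when $\exists q\,\varphi(\bar p,q)$ is $\exists q\,\EQparsol p q$, a system of some degree $d$ in $l+1$ variables (every quantifier-free formula is a Boolean combination of such, and existential quantifiers distribute over disjunctions). Let $n = h_{l,d}(A)$, which is finite by hypothesis. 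The definition of the $(l,d)$-index says precisely that whether $\exists q\,\EQparsol a q$ holds in $A$ depends only on the $\approx_n$-class of $\bar a$. By Remark~\ref{re:EQUIV}, the relation $\approx_n$ is defined by the quantifier-free formula $\EQUIV_{l,n}(\bar p, \bar p')$. Now take $\theta(\bar p)$ to be the (finite) disjunction, over those $\approx_n$-classes that are realized in $A$ by some tuple satisfying $\exists q\,\EQparsol p q$, of a formula isolating that class; concretely, fix one representative $\bar a^{(i)} \in A^l$ for each such class — but since we need a formula, not a formula with parameters, we instead note that there are only finitely many formulas $\EQUIV_{l,n}(\bar p, \bar c)$ up to equivalence as $\bar c$ ranges over $A^l$, and we can replace each by a parameter-free quantifier-free formula describing the corresponding $\sim_n$-pattern (equivalently, a Boolean combination of the atoms $\varphi_B(\bar p) \le \psi_B(\bar p)$ for $B \in \cB_n(\bar p)$ via item~\ref{it:n-equiv-B} of Proposition~\ref{pr:n-equiv}). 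The disjunction of those patterns for which the system is solvable is the desired $\theta$.

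For the ``if'' direction, suppose $\Th(A)$ eliminates quantifiers. Fix $(l,d)$. By QE, for the formula $\exists q\,\EQparsol p q$ there is a quantifier-free $\theta(\bar p)$ equivalent to it in $A$; let $n$ be larger than the degree of $\theta$ and larger than $d$. Then $\theta(\bar p)$ is a Boolean combination of literals of degree $\le n$, so if $\bar a \approx_n \bar a'$ then $\Th_n(\bar a) = \Th_n(\bar a')$ forces $A \models \theta(\bar a) \leftrightarrow \theta(\bar a')$, hence $\exists q\,\EQparsol a q$ holds iff $\exists q\,\EQparsol{a'} q$ holds. (One should check a literal $t(\bar p) = \UN$ of degree $\le n$ is respected: $t \in \Th_n(\bar a)$ iff $t(\bar a) = \UN$, and negated literals likewise; a general literal $s \le t$, i.e. $s \to t = \UN$, has $s \to t$ of degree $\le n$ when $s,t$ have degree $< n$, so shrink $n$-bounds accordingly — it is cleanest to take $n$ strictly above the degree of every literal occurring in $\theta$.) Thus $h_{l,d}(A) \le n < \infty$. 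Since $(l,d)$ was arbitrary, $A$ has the QE property.

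The main obstacle is purely bookkeeping in the first direction: turning the ``depends only on the $\approx_n$-class'' statement into an honest \emph{parameter-free} quantifier-free formula. The resolution is to observe, via Proposition~\ref{pr:n-equiv}(\ref{it:n-equiv-B}), that an $\approx_n$-class of tuples in $A$ is cut out by specifying, for each ball $B \in \cB_n(\bar p)$, whether $\varphi_B(\bar p) \le \psi_B(\bar p)$ holds; there are only finitely many such Boolean patterns, each is a parameter-free quantifier-free formula, and $\theta$ is the disjunction of the patterns along which the system is solvable in $A$. A small point to get right is that $\exists q$ over a disjunction of systems is a disjunction of $\exists q$ over the individual systems, and that negations inside the original quantifier-free formula can be absorbed into the $s_k \ne \UN$ part, so reducing to a single system $\EQparsol p q$ is legitimate; and one must also pass from $\exists \bar q$ (a tuple) to $\exists q$ (one variable) by iterating, raising the number of parameter variables at each step — but the QE property is assumed for \emph{all} $(l,d)$, so this iteration is harmless.
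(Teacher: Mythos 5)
Your proposal is correct and follows essentially the same route as the paper's proof in both directions: reduce to finitely many systems of bounded degree, use $n=h_{l,d}(A)$ together with the fact that the finitely many $\approx_n$-classes are each defined by a parameter-free quantifier-free formula to build the eliminant, and conversely bound $h_{l,d}(A)$ by the degrees of the literals in the quantifier-free equivalents. The only cosmetic difference is that you isolate the $\approx_n$-classes via the ball atoms $\varphi_B(\bar p)\leq\psi_B(\bar p)$ of Proposition~\ref{pr:n-equiv}(\ref{it:n-equiv-B}) where the paper uses representatives of $\Th_n(\bar a)$ directly; these are equivalent descriptions.
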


\begin{proof}
  Assume that the theory of $A$ in $\LHA$ eliminates the quantifiers.
  There are up to intuitionist equivalence finitely many IPC formulas
  in $l$ variables with degree $\leq d$, hence finitely many systems in
  $l+1$ variable of degree $\leq d$. Let $\cS_i(\bar p,q)$ for $1\leq i\leq N$
  an enumeration of them. For each of them, by assumption there is a
  quantifier-free formula $\chi_{\cS}(\bar p)$ such that the theory of
  $A$ proves that $\exists q,\,\cS_i(\bar p,q)$ is equivalent to $\chi_i(\bar
  p)$. Let $n$ be the maximal degree of all the atomic formulas
  composing these formulas $\chi_i$. For every $\bar a,\bar a'\in A^n$ such
  that $\bar a\approx_n\bar a'$, $A\models \chi_i(\bar a)$ if and only if $A\models\chi_i(\bar
  a')$. So the system $\cS_i(\bar a,q)$ has a solution in $A$ if and
  only if the system $\cS_i(\bar a',q)$ has a solution in $A$, that is
  $h_{l,d}(A)\leq n$. 

  Reciprocally, assume that $h_{l,d}(A)$ is finite for every integers
  $l$, $d$. Let $\varphi(\bar p,a)$ be a quantifier-free formula in $l+1$
  variables. It is logically equivalent to a disjunction of finitely
  many systems $\cS_i(\bar p,q)$, for $1\leq i\leq N$. Let $d$ be the
  maximal degree of all these systems, and $n=h_{l,d}(A)$. Using again
  that there are, up to intuitionist equivalence, finitely many IPC
  formulas in $l$ variables with degree $\leq n$, there are also finitely
  many equivalence classes for $\approx_n$ in $A^l$. For every such
  equivalence class $C$, let $\bar a\in C$ and let $\varphi_C$ (resp. $\psi_C$)
  be the conjunction (resp. disjunction) of the formulas $t(\bar
  p)=\UN$ (resp $t(\bar p)\neq\UN$) for $t(\bar p)$ ranging over a finite
  set of representatives of the IPC formulas which belong to
  $\Th_n(\bar a)$ (resp. which don't belong to $\Th_n(\bar a)$).
  Clearly a tuple $\bar a'\in A^l$ belongs to $C$ if and only if
  $A\models\varphi_C(\bar a)\land \lnot \psi_C(\bar a)$. Now, for each system $\cS_i(\bar
  p,q)$ let $\cC_i$ be the list of equivalence classes $C$ for $\approx_n$
  in $A^l$ such that $\cS_i(\bar a,q)$ has a solution in $A$ for some
  $\bar a\in C$ (hence also for every $\bar a\in C$ by definition of
  $n=h_{l,d}(A)$). By construction, for every $\bar a\in A^l$
  \begin{displaymath}
    A\models\exists q,\,\cS_i(\bar a,q) \iff  \bar a \in\bigcup\cC_i \iff A\models
    \llor_{C\in\cC_i}\Big(\varphi_C(\bar a)\land \lnot \psi_C(\bar a)\Big).
  \end{displaymath}
  Let $\chi_i(\bar p)$ be the IPC formula on the right. By construction
  the theory of $A$ proves that $\exists q,\,\varphi(\bar p,q)$ is equivalent to
  $\llor_{i\leq N}\chi_i(\bar p)$. The latter is quantifier-free, hence the
  theory of $A$ eliminates the quantifiers by an immediate induction. 
\end{proof}

\begin{proposition}\label{pr:axiom-QE}
  For any given $l,d,n\in\NN$, there is a closed $\forall\exists$\--formula
  $\FC_{l,d}^n$  in $\LHA$ for every Heyting algebras $A$ 
  \begin{displaymath}
    h_{l,d}(A)\leq n \iff A\models\FC_{l,d}^n. 
  \end{displaymath}
  In particular, every Heyting algebra elementarily equivalent to $A$
  has the same $(l,d)$\--index. 
\end{proposition}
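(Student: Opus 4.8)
The plan is to write down $\FC_{l,d}^n$ explicitly and then check, first, that it is (equivalent to) a $\forall\exists$-formula and, second, that it captures the condition $h_{l,d}(A)\le n$. The key observation is that all the relevant data are already finitary: by Remark~\ref{re:EQUIV} the relation $\approx_n$ on $l$-tuples is defined by the quantifier-free formula $\EQUIV_{l,n}(\bar p,\bar p')$, and there are, up to intuitionist equivalence, only finitely many systems $\EQpar p$ of degree $\le d$ in the variables $(\bar p,q)$; fix an enumeration $\cS_1(\bar p,q),\dots,\cS_N(\bar p,q)$ of representatives. The natural candidate is
\begin{displaymath}
  \FC_{l,d}^n \;=\; \forall\bar p\,\forall\bar p'\,\bigwedge_{i\le N}
    \Big[\big(\EQUIV_{l,n}(\bar p,\bar p') \land \exists q\,\cS_i(\bar p,q)\big)
      \Rightarrow \exists q'\,\cS_i(\bar p',q')\Big].
\end{displaymath}
Since $\EQUIV_{l,n}$ and each $\cS_i$ are quantifier-free, prenexing the two existential quantifiers $q,q'$ outwards (renaming so that distinct conjuncts use distinct witnesses) turns the matrix into a $\forall\exists$-formula; this is the routine bookkeeping step.

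Next I would verify the equivalence. For the direction $h_{l,d}(A)\le n \Rightarrow A\models\FC_{l,d}^n$: suppose $\bar a\approx_n\bar a'$ in $A$ and $\cS_i(\bar a,q)$ has a solution in $A$; by the definition of the $(l,d)$-index $\cS_i(\bar a',q')$ then also has a solution in $A$, which is exactly what the $i$-th conjunct asserts. Conversely, if $A\models\FC_{l,d}^n$, take any $\bar a\approx_n\bar a'$ and any system $\cS$ of degree $\le d$; $\cS$ is intuitionistically equivalent to one of the representatives $\cS_i$, so $A$ proves $\exists q\,\cS(\bar p,q)\leftrightarrow\exists q\,\cS_i(\bar p,q)$, and the $i$-th conjunct of $\FC_{l,d}^n$, instantiated at $(\bar a,\bar a')$ and then at $(\bar a',\bar a)$ by symmetry of $\approx_n$, gives that $\cS(\bar a,q)$ has a solution in $A$ iff $\cS(\bar a',q)$ does. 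Hence $h_{l,d}(A)\le n$. The "in particular" clause is then immediate: $h_{l,d}(A)$ is the least $n$ with $A\models\FC_{l,d}^n$ (or $+\infty$ if no such $n$ exists), a quantity that depends only on which sentences $A$ satisfies, so it is preserved under elementary equivalence.

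The only point that requires genuine care, rather than bookkeeping, is the finiteness of the enumeration $\cS_1,\dots,\cS_N$ and of the index set $\cB_n(\bar p)$ implicit in $\EQUIV_{l,n}$: both rest on the fact, recalled in Section~\ref{se:notation}, that up to intuitionist equivalence there are only finitely many IPC formulas in $\bar p$ (resp. in $(\bar p,q)$) of bounded degree. Granting that, $\FC_{l,d}^n$ is a genuine first-order sentence and the argument above is complete; I do not anticipate a real obstacle beyond making the prenexing and the "every system is equivalent to some $\cS_i$" step precise.
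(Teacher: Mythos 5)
Your proposal is correct and is essentially the paper's own proof: the same formula (a conjunction over the finitely many non-equivalent systems of degree $\le d$ of implications of the form ``$\EQUIV_{l,n}(\bar p,\bar p')$ and solvability at $\bar p'$ imply solvability at $\bar p$''), justified by the same finiteness facts. The only cosmetic caveat is that when you prenex, the existential quantifier in the antecedent of the implication becomes universal, so the prenex form is $\forall\bar p\,\forall\bar p'\,\forall q\,\exists q'(\cdots)$ --- still $\forall\exists$, as required.
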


\begin{proof}
  For every system $\EQpar p$ of degree $\leq d$ let 
  \begin{displaymath}
    \FC^n_{t,\bar s}(\bar p,\bar p')=
    \Big[\left(\EQUIV_{l,n}(\bar p,\bar q') 
      \land \exists q',\,\EQparsol{p'}{q'} \right) \Rightarrow \exists b,\,\EQpar p\Big]
  \end{displaymath}
  where $\EQUIV_{l,n}(\bar p,\bar p')$ is the quantifier-free
  $\LHA$\--formula introduced in Remark~\ref{re:EQUIV}. This
  is a $\forall\exists$\--formula. Clearly $h_{l,d}(A)\leq n$ if and only if
  $A\models\FC^n_{t,\bar s}(\bar a,\bar a')$ for every $\bar a,\bar a'\in
  A^l$ and every system $\EQpar p$ of degree $\leq d$. There are only
  finitely many non-equivalent IPC formulas of degree $\leq d$, hence
  finitely many non-equivalent systems of degree $\leq d$. So
  $h_{l,d}(A)\leq n$ if and only if $A$ satisfies the conjunction of
  the finitely many formulas $\forall\bar p,\bar p'\,\FC^n_{t,\bar s}(\bar
  p,\bar p')$. 
\end{proof}

Let $T$ be the model-completion of the theory of Heyting algebras. By
Pitt's result the models of $T$ are exactly the existentially closed
Heyting algebras. In particular, the Heyting algebra $\HH$ constructed
in Section~\ref{se:prime-model} is a model of $T$. So by
Proposition~\ref{pr:QE} $\HH$ has the QE property. Once and for all,
we let $h_{l,d}=h_{l,d}(\HH)<+\infty$. 

\begin{theorem}\label{th:axiom-mod-comp}
  For every Heyting algebra $H$ the following properties are
  equivalent. 
  \begin{enumerate}
    \item\label{it:axiom-EC}
      $H$ is existentially closed.
    \item\label{it:axiom-DSQE}
      $H$ has the Density, the Splitting and the QE properties. 
    \item\label{it:axiom-DSQE-H0}
      $H$ has the Density and Splitting properties, and
      $h_{l,d}(H)=h_{l,d}$ for every integers $l$, $d$. 
  \end{enumerate}
  In particular, the model-completion of the theory of Heyting
  algebras is axiomatised by the Density and Splitting axioms, and the
  formulas $\FC^{h_{l,d}}_{l,d}$  for every integers $l$, $d$.
\end{theorem}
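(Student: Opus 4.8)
The plan is to prove the cycle of implications
(\ref{it:axiom-EC})$\Rightarrow$(\ref{it:axiom-DSQE})$\Rightarrow$(\ref{it:axiom-DSQE-H0})$\Rightarrow$(\ref{it:axiom-EC}),
and then read off the axiomatisation from the equivalence together with
Theorem~\ref{th:E-clos-DS} and Proposition~\ref{pr:axiom-QE}. The
implication (\ref{it:axiom-EC})$\Rightarrow$(\ref{it:axiom-DSQE}) is the
easy one: Density and Splitting come directly from
Theorem~\ref{th:E-clos-DS}, and the QE property comes from Pitts'
theorem (an existentially closed Heyting algebra is a model of the
model-completion $T$, hence its complete theory eliminates quantifiers,
hence it has the QE property by Proposition~\ref{pr:QE}), exactly as
recorded in Example~\ref{ex:dense-clos}.

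The implication (\ref{it:axiom-DSQE})$\Rightarrow$(\ref{it:axiom-DSQE-H0})
is where the real content lies: one must show that if $H$ has Density,
Splitting and the QE property then its index $h_{l,d}(H)$ equals
$h_{l,d}=h_{l,d}(\HH)$ for all $l,d$. The strategy is to compare $H$
with $\HH$ through a sufficiently saturated elementary extension. Pass
to an $\omega$-saturated elementary extension $H^*\succeq H$; by
Proposition~\ref{pr:axiom-QE} the index is preserved under elementary
equivalence, so $h_{l,d}(H)=h_{l,d}(H^*)$, and likewise I may replace
$\HH$ by an $\omega$-saturated $\HH^*\succeq\HH$ with $h_{l,d}(\HH^*)=h_{l,d}$.
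Both $H^*$ and $\HH^*$ still have Density and Splitting (these are
$\forall\exists$, hence preserved), and both are non-trivial, so by
Corollary~\ref{co:DS-sat-embed} every finitely generated non-trivial
Heyting algebra embeds into each of them. Now fix $l,d$ and a system
$\EQpar p$ of degree $\leq d$, and fix $n\geq\max(h_{l,d}(H^*),h_{l,d}(\HH^*))$
large; I want to show $\EQparsol a q$ has a solution in $H^*$ if and
only if $\EQparsol{a'}q$ has one, whenever $\bar a\approx_n\bar a'$ in
$H^*$ — but with the crucial input that the \emph{same} threshold works
uniformly, matching $\HH$. The key observation is that whether
$\exists q\,\EQparsol a q$ holds in an \emph{existentially closed}
extension depends only on $\fKer\pi_{\bar a}$, i.e.\ only on the
isomorphism type of the finite (or finitely generated) subalgebra
$\langle\bar a\rangle$ and the position of the system relative to it.
Using that $\langle\bar a\rangle$ embeds into $\HH$ (Proposition~\ref{pr:H0-DS},
or rather its consequence that every finite subalgebra of $H$ embeds in
$\HH$ — more precisely one embeds $\langle\bar a\rangle$ into $\HH$,
notes that $\exists q\,\EQparsol a q$ has a solution in \emph{some}
extension iff it has one in $\HH$ by existential closedness of $\HH$ and
the Finite Extension Property, Theorem~\ref{th:FEP-intro}), one
transfers solvability in $H^*$ of the system to solvability in $\HH$ of
the corresponding translated system. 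The QE property of $H$ (via
$h_{l,d}(H^*)$ finite) together with the QE property of $\HH$ then
forces the two indices to coincide: if $h_{l,d}(H^*)$ and $h_{l,d}$
disagree, pick $n$ between them; one of $H^*,\HH^*$ sees an
$n$-similarity pair witnessing a difference in solvability while the
other does not, yet the embedding of the relevant finitely generated
subalgebra into the other (available by Corollary~\ref{co:DS-sat-embed})
together with Theorem~\ref{th:DS-embed} and the Finite Extension
Property transports the witness, a contradiction.

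Finally (\ref{it:axiom-DSQE-H0})$\Rightarrow$(\ref{it:axiom-EC}): suppose
$H$ has Density, Splitting, and $h_{l,d}(H)=h_{l,d}$ for all $l,d$. Let
$\varphi(\bar a,\bar q)$ be quantifier-free with parameters $\bar a\in H^l$
and suppose $\exists\bar q\,\varphi(\bar a,\bar q)$ holds in some extension of
$H$; writing $\varphi$ as a disjunction of systems of degree $\leq d$, it
suffices to treat one system $\EQpar p$ with parameters $\bar a$, and by
the Finite Extension Property (Theorem~\ref{th:FEP-intro}) I may assume
the solution lives in a \emph{finite} extension $B$ of the finitely
generated subalgebra $A=\langle\bar a\rangle$ of $H$. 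Now $A$ embeds
into $\HH$ (Theorem~\ref{th:DS-embed} applied to $\HH$, after noting $A$
is contained in a finite subalgebra in the locally finite case, or more
carefully using that $A$ is finitely generated together with Density
and Splitting of $\HH$ via Corollary~\ref{co:DS-sat-embed} — but in
fact here one wants to go the other way), so there is an $l$-tuple
$\bar a'$ in $\HH$ with $\bar a'\approx_\omega\bar a$; truncating,
$\bar a'\approx_n\bar a$ for $n=h_{l,d}$. Since $\HH$ is existentially
closed and $B$ embeds into $\HH$ over $A$ (Theorem~\ref{th:DS-embed}),
$\EQparsol{a'}q$ has a solution in $\HH$, i.e.\ $h_{l,d}(\HH)=h_{l,d}$
gives nothing new — rather, since $\bar a\approx_n\bar a'$ and
$h_{l,d}(H)=h_{l,d}=h_{l,d}(\HH)$, the defining property of the index
run in $\HH$ tells us $\EQparsol{a'}q$ is solvable in $\HH$, and then
the index property (or directly $\FC^{h_{l,d}}_{l,d}$, which $H$
satisfies by Proposition~\ref{pr:axiom-QE}) run in $H$ transfers
solvability of $\EQparsol a q$ back into $H$. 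Hence
$\exists\bar q\,\varphi(\bar a,\bar q)$ holds in $H$, so $H$ is
existentially closed. The last sentence of the theorem is then
immediate: Density and Splitting are first-order (indeed
$\forall\exists$) by inspection, $h_{l,d}(H)=h_{l,d}$ is equivalent to
$H\models\FC^{h_{l,d}}_{l,d}$ by Proposition~\ref{pr:axiom-QE}, and
these together axiomatise the class of existentially closed Heyting
algebras, which by Pitts' result is the class of models of the
model-completion $T$.

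I expect the main obstacle to be the matching of thresholds in
(\ref{it:axiom-DSQE})$\Rightarrow$(\ref{it:axiom-DSQE-H0}): knowing
each index is \emph{finite} is not enough; one must show the indices of
$H$ and $\HH$ are \emph{equal}. This is where the interplay of the
Finite Extension Property (to reduce to finite extensions of finitely
generated subalgebras), Theorem~\ref{th:DS-embed} (to realise those
finite extensions inside both $H$ — after saturation — and $\HH$), and
the fact that solvability of a system in an existentially closed
extension is a function of the isomorphism type of $\langle\bar a\rangle$
alone, must be combined carefully. Getting the quantifiers and the role
of $\omega$-saturation right, so that a single $n$ works simultaneously
for $H$ and for $\HH$, is the delicate bookkeeping at the heart of the
argument.
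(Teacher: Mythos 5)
There is a genuine gap, and it comes from the way you have distributed the work around the cycle. You place the ``real content'' in (\ref{it:axiom-DSQE})$\Rightarrow$(\ref{it:axiom-DSQE-H0}), i.e.\ in proving directly that the QE property forces $h_{l,d}(H)=h_{l,d}(\HH)$; but your sketch of that step (``pick $n$ between the two indices \dots\ transport the witness'') never produces, inside a single algebra, two $n$\--similar tuples with differing solvability, which is what a disagreement of indices actually means. The paper avoids this entirely: it proves (\ref{it:axiom-DSQE})$\Rightarrow$(\ref{it:axiom-EC}) directly, using only \emph{finiteness} of $h_{l,d}(H)$ and never comparing it with $h_{l,d}$; the equality of indices in (\ref{it:axiom-DSQE-H0}) is then obtained for free from (\ref{it:axiom-EC}) via Pitts' result ($\HH\subseteq H$ by Remark~\ref{re:H0-prime}, hence $\HH\preccurlyeq H$ by model-completeness, hence $H\equiv\HH$ and Proposition~\ref{pr:axiom-QE} applies). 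The key device you are missing is this: after replacing $H$ by an $\omega$\--saturated elementary extension (legitimate by Proposition~\ref{pr:axiom-QE}), one reduces to showing that a system $\EQpar a$ solvable in a finitely generated extension $B$ of $A=\langle\bar a\rangle$ is solvable in $H$; Corollary~\ref{co:DS-sat-embed} embeds $B$ into $H$ itself (\emph{not} over $A$), producing a tuple $(\bar a_0,b_0)$ in $H$ with $\bar a_0\approx_\omega\bar a$ and $b_0$ a solution of $\EQUATION Sts{a_0}q$ \emph{in $H$}; finiteness of $h_{l,d}(H)$ then transfers solvability from $\bar a_0$ to $\bar a$, both tuples now living in $H$.

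Three concrete steps in your (\ref{it:axiom-DSQE-H0})$\Rightarrow$(\ref{it:axiom-EC}) fail as written. First, the Finite Extension Property (Theorem~\ref{th:FEP-intro}) requires the parameter algebra to be \emph{finite}, but $A=\langle\bar a\rangle$ need not be finite since $H$ is not assumed locally finite. Second, ``$A$ embeds into $\HH$'' is false in general: $\HH$ is locally finite, so no infinite finitely generated algebra embeds into it, and Corollary~\ref{co:DS-sat-embed} does not apply to $\HH$ because $\HH$, being countable and non-trivial, is not $\omega$\--saturated. Third, your final transfer (``the index property run in $H$ transfers solvability of $\EQpar a$ back into $H$'') misapplies the definition of $h_{l,d}(H)$: that definition compares two tuples of $H$ and solvability \emph{in $H$}, whereas you only have a tuple $\bar a'$ in $\HH$ and solvability in $\HH$. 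Your implication (\ref{it:axiom-EC})$\Rightarrow$(\ref{it:axiom-DSQE}) and your reading of the final axiomatisation statement are fine, but the two substantive implications need to be reorganised along the paper's lines.
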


\begin{proof}
  (\ref{it:axiom-EC})$\Rightarrow$(\ref{it:axiom-DSQE-H0}) Since $H$ is
  existentially closed, it has the Density and Splitting
  properties by Theorem~\ref{th:E-clos-DS}. Moreover $\HH\subseteq H$ by
  Remark~\ref{re:H0-prime}, and $\HH$ is existentially closed by
  Proposition~\ref{pr:H0-DS}, so $\HH\preccurlyeq H$  y
  Pitts' result. In particular $\HH\equiv H$ so $h_{l,d}(H)=h_{l,d}(\HH)$ for every
  $l$, $d$ by Proposition~\ref{pr:axiom-QE}.

  (\ref{it:axiom-DSQE-H0})$\Rightarrow$(\ref{it:axiom-DSQE}) is clear.

  (\ref{it:axiom-DSQE})$\Rightarrow$(\ref{it:axiom-EC}) 
  The case of the one-point Heyting algebra being trivial, we can
  assume that $\ZERO\neq\UN$ in $H$. By model-theoretic non-sense, we can
  assume that $H$ is $\omega$\--saturated and it suffices to prove that for
  every $\bar a\in H^l$ and every finitely generated Heyting algebra $B$
  containing the algebra $A$ generated by $\bar a$ in $H$, there is an
  embedding of $B$ into $H$ over $A$. By an immediate induction we are
  reduced to the case where $B$ is generated over $A$ by a single
  element $b$. Replacing if necessary $H$ by an elementary extension,
  thanks to Proposition~\ref{pr:axiom-QE} we can assume that $H$ is
  $\omega$\--saturated.

  Let $\Sigma$ be the set of atomic or neg-atomic $\LHA$\--formulas $\varphi(\bar
  p,q)$ such that $B\models\varphi(\bar a,b)$. We only have to prove that for some
  $b'\in H$, $\Sigma$ is satisfied by $(\bar a,b')$ (by which we mean that
  $H\models\varphi(\bar a,b')$ for every formula $\varphi(\bar p,q)$ in $\Sigma$). Indeed,
  $B$ will then be isomorphic over $A$ to the Heyting algebra
  generated in $H$ by $(\bar a,b')$, so we are done. Since $H$ is
  $\omega$\--saturated, it suffices to prove that every finite fragment of
  $\Sigma$ is satisfied by $(\bar a,b')$ for some $b'\in H$. Such a finite
  fragment is equivalent to a system. So we want to prove that
  for every system $\EQpar p$, if $\EQpar a$ has a solution in $B$
  then it has a solution in $H$. 

  Now let $\EQpar p$ be a system such that $\EQpar a$ has a solution in
  $B$, and let $d$ be the degree of $\EQpar p$. By
  Corollary~\ref{co:DS-sat-embed} $B$ embeds into $H$. So let $(\bar
  a_0,b_0)\in H^{l+1}$ be the image of $(\bar a,b)$ by such an
  embedding. Then $\Th_n(\bar a,b)=\Th_n(\bar a_0,b_0)$ for every
  integer $n\geq0$. {\it A fortiori} $\Th_n(\bar a)=\Th_n(\bar a_0)$ for
  every $n$, and in particular for $n=h_{l,d}(H)$. Now $\EQpar{a_0}$
  has solution in $H$, namely $b_0$, and $\bar a\approx_n\bar a_0$ for
  $n=h_{l,d}(H)$. So $\EQpar a$ has a solution in $H$, by definition
  of $h_{l,d}(H)$. 
\end{proof}

\section{The Open Mapping Theorem}
\label{se:OMT}

Our aim in this section is to derive the Open Mapping Theorem of
\cite{gool-regg-2018} from Theorem~\ref{th:axiom-mod-comp}.

We first need a lemma. As usually we consider IPC formulas $t(\bar
p,q)$ and $s_k(\bar p,q)$ as elements of $\Free{\bar p,q}$. In
particular, $t(\bar p,q)\uparrow$ in the next result is the principal filter
generated by $t(\bar p,q)$ in $\Free{\bar p,q}$.

\begin{lemma}\label{le:zero-Ker-Esa}
  Let $\EQpar p$ be a system, with $\bar s=(s_1,\dots,s_\kappa)$.
  For every $l$\--tuple $\bar a$ in a Heyting algebra $A$, the
  following assertions are equivalent. 
  \begin{enumerate}
    \item\label{it:zero}
      $\EQpar a$ has a solution in some extension of $A$.
    \item\label{it:Ker}
      $\fKer\pi_{\bar a}$ contains $t(\bar p,q)\uparrow\cap\Freebar p$, and the
      filter generated in $\Free{\bar p,q}$ by $\fKer\pi_{\bar
      a}\cup\{t(\bar p,q)\}$ does not contain any of the $s_k(\bar p,q)$'s.
    \item\label{it:Esa}
      For every $y\in\ESFbar p$ containing $\fKer \pi_{\bar a}$, there
      is $x\in \oc t(\bar p,q)\fc$ such that $x\cap\Freebar p =y$.
      Moreover, for each $k\leq \kappa$, there is at least one of these $y$'s
      for which $x$ can be chosen outside of $\oc s_k(\bar p,q)\fc$. 
  \end{enumerate}
\end{lemma}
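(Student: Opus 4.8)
The plan is to prove the equivalences by going around the cycle \ref{it:zero}$\Rightarrow$\ref{it:Ker}$\Rightarrow$\ref{it:Esa}$\Rightarrow$\ref{it:zero}, translating systematically between the algebraic side (filters in $\Free{\bar p,q}$) and the dual Esakia side (prime filters in $\ESF{\bar p,q}$). The underlying principle is that ``$\EQpar a$ has a solution in an extension of $A$'' means exactly that the pushout of $A \leftarrow \Freebar p \to \Free{\bar p,q}$, taken modulo the filter generated by the equations, is nontrivial and separates each $s_k$ from $\UN$; and this pushout is governed by how $\fKer\pi_{\bar a}$ interacts with $t(\bar p,q)\uparrow$.

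First, for \ref{it:zero}$\Rightarrow$\ref{it:Ker}: suppose $\bar b$ is a solution of $\EQpar a$ in an extension $A'\supseteq A$. Consider the canonical morphism $\pi_{\bar a,\bar b}:\Free{\bar p,q}\to A'$ sending $(\bar p,q)$ to $(\bar a,\bar b)$. Since $t(\bar a,\bar b)=\UN$ we get $t(\bar p,q)\in\fKer\pi_{\bar a,\bar b}$, so the filter generated by $\fKer\pi_{\bar a}\cup\{t(\bar p,q)\}$ is contained in $\fKer\pi_{\bar a,\bar b}$ (using that $\fKer\pi_{\bar a}=\fKer\pi_{\bar a,\bar b}\cap\Freebar p$ since $\pi_{\bar a,\bar b}$ restricted to $\Freebar p$ is $\pi_{\bar a}$ composed with the inclusion $A\hookrightarrow A'$); and since $s_k(\bar a,\bar b)\neq\UN$ we have $s_k(\bar p,q)\notin\fKer\pi_{\bar a,\bar b}$, hence $s_k(\bar p,q)$ is not in the smaller generated filter. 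The containment $t(\bar p,q)\uparrow\cap\Freebar p\subseteq\fKer\pi_{\bar a}$ follows because any element of $\Freebar p$ above $t(\bar p,q)$ maps to $\UN$ under $\pi_{\bar a,\bar b}$, hence under $\pi_{\bar a}$.

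For \ref{it:Ker}$\Rightarrow$\ref{it:Esa}: let $I$ be the filter generated in $\Free{\bar p,q}$ by $\fKer\pi_{\bar a}\cup\{t(\bar p,q)\}$; the dual of $\Free{\bar p,q}/I$ is $\{x\in\ESF{\bar p,q}\tq I\subseteq x\} = \oc t(\bar p,q)\fc \cap \{x\tq \fKer\pi_{\bar a}\subseteq x\}$. Fix $y\in\ESFbar p$ with $\fKer\pi_{\bar a}\subseteq y$. The restriction map $x\mapsto x\cap\Freebar p$ from $\ESF{\bar p,q}$ to $\ESFbar p$ is the Esakia dual of the inclusion $\Freebar p\hookrightarrow\Free{\bar p,q}$, which is injective; so I want a prime filter $x\supseteq I$ with $x\cap\Freebar p = y$. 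By Fact~\ref{fa:ideal-quo} and duality, the existence of a prime filter of $\Free{\bar p,q}$ restricting to $y$ and containing $t(\bar p,q)$ is equivalent to $t(\bar p,q)$ not lying in the filter of $\Free{\bar p,q}$ generated by $y$ — equivalently, by Fact~\ref{fa:radical}, to the condition that $y$ (pulled up to $\Free{\bar p,q}$) together with $t(\bar p,q)$ generates a proper filter, which is exactly the hypothesis that $t(\bar p,q)\uparrow\cap\Freebar p\subseteq y$ (this is where I use $\fKer\pi_{\bar a}\subseteq y$ gives $t\uparrow\cap\Freebar p\subseteq y$). For the ``moreover'' clause: the condition that $s_k$ is not in the filter generated by $\fKer\pi_{\bar a}\cup\{t\}$ means $s_k\notin\bigcap\{x\tq I\subseteq x\}$, so there is $x\in\oc t\fc$ with $\fKer\pi_{\bar a}\subseteq x$ and $s_k\notin x$; take $y=x\cap\Freebar p$.

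For \ref{it:Esa}$\Rightarrow$\ref{it:zero}: the hypothesis says precisely that $I := \langle\fKer\pi_{\bar a}\cup\{t(\bar p,q)\}\rangle$ satisfies that the restriction map $\{x\tq I\subseteq x\}\to\{y\tq\fKer\pi_{\bar a}\subseteq y\}$ is onto, and that no $s_k$ is in $I$ (the latter from the ``moreover'' clause plus Fact~\ref{fa:radical}). Let $B = \Free{\bar p,q}/I$ with projection $\pi:\Free{\bar p,q}\to B$. Then $A$ embeds into $B$: the composite $\Freebar p\hookrightarrow\Free{\bar p,q}\xrightarrow{\pi} B$ has filter kernel exactly $\fKer\pi_{\bar a}$ (one inclusion is clear; the other is the surjectivity of the restriction map together with Fact~\ref{fa:radical}), so it factors through $\pi_{\bar a}$ as an injection $\bar\pi_{\bar a}:A\to B$ by Fact~\ref{fa:facto-morph}. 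Setting $b = \pi(q)\in B$, we have $t(\bar a,b) = \pi(t(\bar p,q)) = \UN$ in $B$ and $s_k(\bar a,b) = \pi(s_k(\bar p,q))\neq\UN$ since $s_k\notin I$. Thus $b$ is a solution of $\EQpar a$ in the extension $B\supseteq A$.

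The main obstacle is getting the two directions involving \ref{it:Esa} exactly right: the surjectivity statement ``every $y\supseteq\fKer\pi_{\bar a}$ lifts to some $x\supseteq I$'' has to be matched precisely with the algebraic statement ``$t(\bar p,q)\uparrow\cap\Freebar p\subseteq\fKer\pi_{\bar a}$'', and this pivots on the interplay between Fact~\ref{fa:radical} (every filter is the intersection of the prime filters above it) and Fact~\ref{fa:ideal-quo} (surjective morphisms dualize to order-isomorphisms onto up-closed subsets of the spectrum). Concretely, I expect to need: $t\uparrow\cap\Freebar p\subseteq\fKer\pi_{\bar a}$ $\iff$ for every prime $y\supseteq\fKer\pi_{\bar a}$ the filter of $\Free{\bar p,q}$ generated by $y\cup\{t\}$ is proper $\iff$ every such $y$ lifts to a prime $x\supseteq I$. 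Spelling out that chain carefully, using that $y$ prime in $\Freebar p$ generates in $\Free{\bar p,q}$ a filter whose primes above it restrict exactly to the primes of $\Freebar p$ above $y$ (Fact~\ref{fa:ideal-quo} applied to $\Freebar p\to\Freebar p/y$, combined with Stone duality), is the delicate bookkeeping step; everything else is routine translation.
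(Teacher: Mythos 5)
Your cyclic organisation (\ref{it:zero})$\Rightarrow$(\ref{it:Ker})$\Rightarrow$(\ref{it:Esa})$\Rightarrow$(\ref{it:zero}) is a legitimate alternative to the paper's, and your (\ref{it:Esa})$\Rightarrow$(\ref{it:zero}) correctly merges the paper's (\ref{it:Esa})$\Rightarrow$(\ref{it:Ker}) and (\ref{it:Ker})$\Rightarrow$(\ref{it:zero}) into a single pass through Fact~\ref{fa:facto-morph}. But the heart of the lemma is the lifting step in (\ref{it:Ker})$\Rightarrow$(\ref{it:Esa}), and your argument for it fails. You assert that the existence of a prime $x$ of $\Free{\bar p,q}$ with $x\cap\Freebar p=y$ and $t\in x$ is equivalent to ``$t$ not lying in the filter generated by $y$'', and then to ``the filter generated by $y\cup\{t\}$ being proper''. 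Neither equivalence is correct. If $t$ \emph{were} in the filter generated by $y$, then every prime restricting to $y$ (and such primes always exist, since the dual of an embedding is surjective) would automatically contain $t$, so your first condition points in the wrong direction. And properness of the filter generated by $y\cup\{t\}$ only yields, via the prime filter theorem, some prime $x\supseteq y\cup\{t\}$ whose trace $x\cap\Freebar p$ may be \emph{strictly larger} than $y$. The correct criterion for an exact lift is that the filter generated by $y\cup\{t\}$ in $\Free{\bar p,q}$ be disjoint from the prime ideal $\Freebar p\setminus y$, i.e.\ that there be no relation $\varphi_0\meet t\leq\psi$ with $\varphi_0\in y$ and $\psi\in\Freebar p\setminus y$; properness only excludes the case $\psi=\ZERO$, which is strictly weaker.

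Excluding such relations is exactly where the hypothesis $t\uparrow\cap\Freebar p\subseteq\fKer\pi_{\bar a}\subseteq y$ must be used, via the Heyting adjunction: $\varphi_0\meet t\leq\psi$ gives $t\leq\varphi_0\to\psi$, hence $\varphi_0\to\psi\in t\uparrow\cap\Freebar p\subseteq y$, hence $\varphi_0\meet(\varphi_0\to\psi)\in y$ and so $\psi\in y$, a contradiction. This computation is the algebraic form of the paper's compactness argument on $\oc t\fc$ (the finite subcover producing $\varphi$, $\psi$ with $t\leq\psi\to\varphi$ is the same separation in topological dress), and it is the one genuinely non-routine idea of the proof; the chain of equivalences in your closing paragraph ($t\uparrow\cap\Freebar p\subseteq\fKer\pi_{\bar a}$ iff the generated filter is proper iff $y$ lifts) breaks at the second ``iff''. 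A secondary, minor omission: your (\ref{it:Esa})$\Rightarrow$(\ref{it:zero}) produces an extension of the subalgebra generated by $\bar a$ rather than of $A$ itself, so, as in the paper, a word about amalgamation is needed to conclude.
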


\begin{proof}
Let $A_0$ be the Heyting algebra generated in $A$ by $\bar a$. If
$\EQpar a$ has a solution in some extension of $A_0$, by the
amalgamation property of Heyting algebras it has a solution in some
extension of $A$, and reciprocally. Thus, replacing $A$ by $A_0$ if
necessary, we can assume that $A$ is generated by $\bar a$. We let $G$
be the filter generated in $\Free{\bar p,q}$ by $\fKer\pi_{\bar a}\cup\{t\}$,
or equivalently by $\fKer\pi_{\bar a}\cup t\uparrow$. 

(\ref{it:zero})$\Rightarrow$(\ref{it:Ker}) In some extension $H$ of $A$, let
$b$ be such that $t(\bar a,b)=\UN$ and $s_k(\bar a,b)\neq\UN$ for every
$k\leq\kappa$. There is a
unique $\LHA$\--morphism $\pi_{\bar a,b}:\Free{\bar p,q}\to H$ mapping
$(\bar p,q)$ onto $(\bar a,b)$. By construction $\pi_{\bar a}$ is the
restriction of $\pi_{\bar a,b}$ to $\Freebar p$, hence $\fKer\pi_{\bar
a}=\fKer\pi_{\bar a,b}\cap\Freebar p$. We have
\begin{displaymath}
  t\in\fKer\pi_{\bar a,b}\ \Rightarrow \ t\uparrow\subseteq\fKer\pi_{\bar a,b} 
  \ \Rightarrow \ t\uparrow\cap\Freebar p \subseteq \fKer\pi_{\bar a,b}\cap\Freebar p.
\end{displaymath}
So $t\uparrow\cap\Freebar p$ is contained in $\fKer\pi_{\bar a}$.
Moreover $G$ is contained in $\fKer\pi_{\bar a,b}$ and
$s_k\notin\fKer\pi_{\bar a,b}$, hence $s_k\notin G$, for every $k\leq\kappa$.

(\ref{it:Ker})$\Rightarrow$(\ref{it:zero}) Assuming (\ref{it:Ker}), we are
claiming that $G\cap\ESFbar p=\fKer\pi_{\bar a}$. The right-to-left
inclusion is clear. Reciprocally, if $\varphi(\bar p)\in G$ then $\varphi(\bar
p)\geq\varphi_0(\bar p)\meet\tau(\bar p,q)$ for some $\varphi_0\in\fKer\pi_{\bar a}$ and
$\tau\in t\uparrow$. So $\varphi_0\to\varphi\geq\tau$, that is $\varphi_0\to\varphi\in t\uparrow\cap\ESFbar p$, hence
$\varphi_0\to\varphi\in\fKer\pi_{\bar a}$ by assumption (\ref{it:Ker}). It follows that
$\varphi_0\meet(\varphi_0\to\varphi)\in\fKer\pi_{\bar a}$, that is $\varphi_0\meet\varphi\in\fKer\pi_{\bar a}$. {\it A
fortiori} $\varphi\in\fKer\pi_{\bar a}$ which proves our claim. 

Let $B=\ESF{\bar p,q}/G$ and $f:\ESF{\bar p,q}\to B$ be
the canonical projection. $G\cap\ESFbar p$ is obviously the filter
kernel of the restriction of $f$ to $\ESFbar p$. Since $G\cap\ESFbar
p=\fKer\pi_{\bar a}$, by Factorisation of Morphisms we then have a
commutative diagram as follows (with left arrow the natural
inclusion).

\begin{center}
  \begin{tikzcd}
    \Free{\bar p,q} 
      \arrow[r,two heads,"f"]
    & B\vphantom{(} 
    \\
    \Freebar p
      \arrow[u,hook]
      \arrow[r,two heads,"\pi_{\bar a}"]
    & A
      \arrow[u,hook,dashed]
  \end{tikzcd}
\end{center}

We have $t\in G$ by construction, and $s_k\notin G$ for every $k\leq \kappa$ by assumption
(\ref{it:Ker}). So, after identification of $A$ with its image by
the dashed arrow, $B$ is an extension of $A$ in which $f(q)$ is a
solution of $\EQpar a$.

(\ref{it:Ker})$\Rightarrow$(\ref{it:Esa}) 
Given $y\in\ESFbar p$ containing $\fKer \pi_{\bar a}$, assume for a
contradiction that for every $x\in \oc t\fc$, $x\cap\Freebar p\neq y$. If
$x\cap\Freebar p\not\subseteq y$, pick any $\varphi_x$ in $(x\cap\Freebar p)\setminus y$ and let
$U_x=\oc\varphi_x\fc$. If $x\cap\Freebar p\not\supseteq y$, pick any $\psi_x$ in
$y\setminus(x\cap\Freebar p)$ and let $U_x=\oc\psi_x\fc^c$. The family
$(U_x)_{x\in\oc t\fc}$ is an open cover of the compact space $\oc
t\fc$. A finite subcover gives $\varphi_1,\dots,\varphi_\alpha\in \Freebar p\setminus y$ and
$\psi_1,\dots,\psi_\beta\in y$ such that 
\begin{displaymath}
  \oc t\fc \subseteq \bigcup_{i\leq\alpha}\oc\varphi_i\fc\cup \bigcup_{j\leq\beta}\oc\psi_j\fc^c.
\end{displaymath}
That is $\oc t\fc \subseteq \oc\varphi\fc\cup\oc\psi\fc^c$ with $\varphi=\jjoin_{i\leq\alpha}\varphi_i$ and
$\psi=\mmeet_{j\leq\beta}\psi_j$. In particular, $t\leq \psi\to\varphi$ so 
\begin{displaymath}
  \psi\to\varphi \in t\uparrow\cap\Freebar p \subseteq \fKer\pi_{\bar a}\subseteq y \Longrightarrow \psi\to\varphi\in y. 
\end{displaymath}
On the other hand every $\psi_i\in y$ hence $\psi\in y$, and every $\varphi_i\notin y$
hence $\varphi\notin y$ (because $y$ is a prime filter). In particular
$\psi\to\varphi\notin y$, a contradiction. 

We have proved that the first part of (\ref{it:Ker}) implies the first
part of (\ref{it:Esa}). Since $G$ is the intersection of the prime
filters of $\Free{\bar p,q}$ which contain $G$, the second part of
(\ref{it:Ker}) implies that for each $k\leq\kappa$ there is $x\in\ES{\bar
p,q}$ containing $\fKer\pi_{\bar a}$ and $t$ but not $s_k$. Letting
$y=x\cap\Freebar p$, we get the second part of (\ref{it:Esa}).

(\ref{it:Esa})$\Rightarrow$(\ref{it:Ker}) 
Let $\cY=\big\{y\in\ESFbar p\tq\fKer\pi_{\bar a}\subseteq y\big\}$ and
$\cX=\{x\cap\Freebar p\tq x\in\oc t\fc\}$. The first part of (\ref{it:Esa})
says that $\cY\subseteq\cX$, hence $\bigcap\cY\supseteq\bigcap\cX$. Since every filter is the
intersection of the prime filters which contain it,
$\bigcap\cY=\fKer\pi_{\bar a}$ and $\bigcap\oc t\fc=t\uparrow$ hence
\begin{displaymath}
  \bigcap\cX=\bigcap\oc t\fc\cap\Freebar p=t\uparrow\cap\Freebar p. 
\end{displaymath}
So $\fKer\pi_{\bar a}\supseteq t\uparrow\cap\Freebar p$, which is the first part of
(\ref{it:Ker}). Now for each $k\leq\kappa$ the second part of (\ref{it:Esa}) gives $x\in\oc
t\fc\setminus\oc s_k\fc$ such that $x\cap\Freebar p$ contains $\fKer\pi_{\bar a}$.
So $G\subseteq x$, in particular $s_k\notin G$. 
\end{proof}

\begin{theorem}[Open Mapping for $\ESFbar p$]\label{th:OMT} 
  For every integer $d$, every $y\in\ESFbar p$ and every
  $x_0\in\ESFbar{p,q}$, if $d_{\bar p}(y,x_0\cap\Freebar p)<2^{-h_{l,d}}$,
  there exists $x\in\ESFbar{p,q}$ such that $y=x\cap\Freebar p$ and
  $d_{\bar p,q}(x,x_0)<2^{-d}$.
\end{theorem}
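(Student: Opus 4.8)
The plan is to translate the metric statement into the algebraic language of systems of equations, apply the characterisation of $h_{l,d}$ via Theorem~\ref{th:axiom-mod-comp} (or rather via the definition of $h_{l,d}=h_{l,d}(\HH)$ together with the fact that $\HH$ has the QE property), and then invoke Lemma~\ref{le:zero-Ker-Esa} to go back from solvability of a system to the existence of the desired prime filter $x$. Concretely, fix $d$, $y\in\ESFbar p$ and $x_0\in\ESFbar{p,q}$ with $d_{\bar p}(y,x_0\cap\Freebar p)<2^{-h_{l,d}}$. First I would code the local data of $x_0$ around itself by a system: since there are, up to intuitionist equivalence, only finitely many IPC formulas in $(\bar p,q)$ of degree $\le d$, the ball $B_{\bar p,q}(x_0,2^{-d})$ is cut out by a single pair $(\varphi,\psi)$ of formulas of degree $<d$, so that asking for $x$ with $d_{\bar p,q}(x,x_0)<2^{-d}$ is the same as asking $\varphi(\bar p,q)\in x$ and $\psi(\bar p,q)\notin x$. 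Absorbing $\varphi$ into the ``$t=\UN$'' part and $\psi$ into a ``$s\neq\UN$'' part, this is exactly a system $\EQpar{}$ of degree (essentially) $d$, where now $\bar p$ plays the role of the parameter tuple.

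Next I would pick a parameter realisation. Choose any $l$-tuple $\bar a$ in $\HH$ whose filter kernel reflects $y$ in the sense that $y$ contains $\fKer\pi_{\bar a}$ — such $\bar a$ exists because $\HH$ is existentially closed (hence, by Theorem~\ref{th:DS-embed} applied to the finitely generated subalgebra of $\ESFbar p / $ the filter generated by $y$, every finitely generated Heyting algebra embeds into $\HH$); more precisely one picks $\bar a$ so that the Heyting subalgebra it generates is isomorphic to $\Freebar p$ modulo the filter $\bigcap\{y\}$-image, i.e. so that $\fKer\pi_{\bar a}$ is the largest filter contained in $y$ that is still ``$d$-generic''. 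In the same way pick $\bar a_0$ in $\HH$ attached to $x_0\cap\Freebar p$. The hypothesis $d_{\bar p}(y,x_0\cap\Freebar p)<2^{-h_{l,d}}$ says precisely $y\sim_{h_{l,d}}(x_0\cap\Freebar p)$, which by Proposition~\ref{pr:n-equiv} (item~\ref{it:n-equiv-Th}) forces $\bar a\approx_{h_{l,d}}\bar a_0$, i.e. $\Th_{h_{l,d}}(\bar a)=\Th_{h_{l,d}}(\bar a_0)$, once the $\bar a$'s have been chosen so that their $h_{l,d}$-theories literally encode the $\sim_{h_{l,d}}$-class.

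Now I would run the definition of the index. The system $\EQpar{a_0}$ has a solution in $\HH$: indeed $x_0$ itself, via Lemma~\ref{le:zero-Ker-Esa}~(\ref{it:Esa})$\Rightarrow$(\ref{it:zero}) using that $x_0$ lies over $x_0\cap\Freebar p$, produces a solution in an extension of the subalgebra generated by $\bar a_0$, and since $\HH$ is existentially closed that solution can be found inside $\HH$. Because $\bar a\approx_{h_{l,d}}\bar a_0$ and $h_{l,d}=h_{l,d}(\HH)$, the defining property of the index gives that $\EQpar{a}$ also has a solution in $\HH$, a fortiori in an extension of the subalgebra generated by $\bar a$. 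Feeding this back through Lemma~\ref{le:zero-Ker-Esa}~(\ref{it:zero})$\Rightarrow$(\ref{it:Esa}) yields, for the particular $y$ (which contains $\fKer\pi_{\bar a}$ by construction), a prime filter $x\in\oc t\fc\setminus\oc s\fc$ with $x\cap\Freebar p=y$; unwinding the coding of $(t,s)$ in terms of $(\varphi,\psi)$, this $x$ satisfies $\varphi(\bar p,q)\in x$, $\psi(\bar p,q)\notin x$, hence $d_{\bar p,q}(x,x_0)<2^{-d}$, which is exactly what we want.

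The main obstacle, and the step that needs the most care, is the bookkeeping that makes the parameter tuples $\bar a$, $\bar a_0$ faithfully represent $y$ and $x_0\cap\Freebar p$: one must be sure that ``$y$ contains $\fKer\pi_{\bar a}$'' together with the choice of $\bar a$ really is equivalent to the statement about the ball $B_{\bar p}$ (this is precisely Claim~\ref{cl:phiB-y-Ker}), and that the passage between $\sim_{h_{l,d}}$-equivalence of points and $\approx_{h_{l,d}}$-equivalence of the associated parameter tuples is legitimate — here one leans on Proposition~\ref{pr:n-equiv} and on the locality of everything in degree $\le d$. The other delicate point is to ensure that the solution of $\EQpar{a_0}$ produced from $x_0$ can be taken in $\HH$ and not merely in some extension; this is where existential closedness of $\HH$ (Proposition~\ref{pr:H0-DS}), i.e. the whole point of having built the prime model, is essential. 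Once these identifications are in place, the proof is just a chain of the quoted equivalences.
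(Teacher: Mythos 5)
Your overall strategy is the right one and matches the paper's: encode the target ball $B_{\bar p,q}(x_0,2^{-d})=\oc\varphi_B\fc\setminus\oc\psi_B\fc$ as a system $\EQUATION\cS{\varphi_B}{\psi_B}{p}{q}$ with $\bar p$ as parameters, use the index $h_{l,d}$ to transfer solvability from the tuple attached to $x_0\cap\Freebar p$ to the tuple attached to $y$, and then convert solvability back into the existence of the prime filter $x$ via Lemma~\ref{le:zero-Ker-Esa}. But the middle step, the ``parameter realisation'', has a genuine gap. You insist on finding the tuples $\bar a$ and $\bar a_0$ inside $\HH$, justified by the assertion that every finitely generated Heyting algebra embeds into $\HH$. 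That assertion is false: $\HH$ is locally finite, so its finitely generated subalgebras are all finite, whereas $\Freebar p/y$ for an arbitrary prime filter $y$ is in general infinite. (Corollary~\ref{co:DS-sat-embed}, which you seem to be reaching for, requires $\omega$\--saturation, which $\HH$ does not have; Theorem~\ref{th:DS-embed} only covers finite $B$.) Moreover, the surrogate condition you propose --- choosing $\bar a$ with merely $\fKer\pi_{\bar a}\subseteq y$, or with $\fKer\pi_{\bar a}$ some unspecified ``largest $d$\--generic filter contained in $y$'' --- is not enough: the whole transfer argument needs $\Th_{h_{l,d}}(\bar a)$ to record exactly the degree\--$\leq h_{l,d}$ part of $y$, and this holds only when $\fKer\pi_{\bar a}=y$ on the nose; otherwise $y\sim_{h_{l,d}}y_0$ does not yield $\bar a\approx_{h_{l,d}}\bar a_0$, and the final application of Lemma~\ref{le:zero-Ker-Esa} no longer produces an $x$ lying exactly over $y$.

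The paper's proof repairs precisely this point, and in a way that makes the detour through $\HH$ unnecessary. Take $A=\Freebar p/y$ and $A_0=\Freebar p/y_0$ with $\bar a$, $\bar a_0$ the images of $\bar p$, so that $\fKer\pi_{\bar a}=y$ and $\fKer\pi_{\bar a_0}=y_0$ exactly; then $y\sim_{h_{l,d}}y_0$ is literally $\Th_{h_{l,d}}(\bar a)=\Th_{h_{l,d}}(\bar a_0)$. These two (possibly infinite) algebras are amalgamated over the two-element algebra into a common extension, which is then embedded into \emph{some} existentially closed Heyting algebra $H$ --- not $\HH$. The system with parameters $\bar a_0$ has a solution in $B_0=\Freebar{p,q}/x_0$, hence in $H$ by existential closedness, and the key input is Theorem~\ref{th:axiom-mod-comp}, which guarantees $h_{l,d}(H)=h_{l,d}$ for \emph{every} existentially closed $H$ (this is where $\HH$ enters, via $\HH\preccurlyeq H$, but only at the level of that theorem, not in the present proof). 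With this substitution your argument goes through; without it, the construction of $\bar a$ and $\bar a_0$ does not exist as described.
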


This result says that the dual of the natural embedding $f:\Freebar
p\to\Freebar{p,q}$, is an open map $f^\uparrow:\ESFbar p\to\ESFbar{p,q}$. As shown
in \cite{gool-regg-2018}, it easily follows that for every
$\LHA$\--morphism $f:A\to B$ between finitely presented Heyting
algebras, the dual map $f^\uparrow:\ES B\to\ES A$ is an open map. 

\begin{proof}
  Let $y_0=x_0\cap\Freebar p$ and $B=B_{\bar p}(y_0,2^{-h_{l,d}}) =
  \oc\varphi_B\fc\setminus\oc\psi_B\fc$. Let $A_0=\Freebar p/y_0$,
  $B_0=\Freebar{p,q}/x_0$ and $(\bar a_0,b_0)$ be the image of $(\bar
  p,q)$ by the canonical projection $\pi_{\bar a_0,b_0}:\Freebar{p,q}\to
  B_0$. By construction $A_0$ is the Heyting algebra generated by
  $\bar a_0$ in $B_0$, and the system $\EQUATION\cS{\varphi_B}{\psi_B}{a_0}q$ has a
  solution in $B_0$, namely $b_0$. 

  Let $A=\Freebar p/y$, and $\bar a$ be the image of $\bar p$ by the
  canonical projection $\pi_{\bar a}:\Freebar p\to A$. By construction
  $d_{\bar p}(y_0,2^{-h_{l,d}})<2^{-h_{l,d}}$, that is
  $y\sim_{h_{l,d}}y_0$. Hence for every IPC formula $\theta(\bar p)$ of degree
  $\leq h_{l,d}$, $\theta(\bar p)\in y$ if and only if $\theta(\bar p)\in y_0$. But
  $\theta(\bar p)\in y$ if and only if $\theta(\bar a)=\UN$, and similarly for
  $\bar a_0$, so $\Th_{h_{l,d}}(\bar a)=\Th_{h_{l,d}}(\bar a_0)$.

  Now $A$ and $A_0$ are non-trivial by construction, so they contain
  the two-element Heyting algebra. By the amalgamation property,
  both of them embed in a common extension, which itself embeds in an
  existentially closed extension $H$. So $\EQUATION\cS{\varphi_B}{\psi_B}{a_0}q$
  has a solution in $A_0$, hence in $H$, and $\bar a\approx_{h_{l,d}}\bar
  a_0$. Now $h_{l,d}(H)=h_{l,d}$ by Theorem~\ref{th:axiom-mod-comp}, so 
  $\EQUATION\cS{\varphi_B}{\psi_B}{a}q$ has a solution $b$ in $H$. The
  equivalence (\ref{it:zero})$\Leftrightarrow$(\ref{it:Esa}) of
  Lemma~\ref{le:zero-Ker-Esa} applies: since $y=\fKer\pi_{\bar a}$,
  there is $x\in\oc\varphi_B\fc\setminus\oc\psi_B\fc$ (that is $x\in B$) such that
  $x\cap\Freebar p=y$. 
\end{proof} 


\section{Discriminant and co-discriminant}
\label{se:disc}

Throughout this section we fix a system of degree $\leq d$
\begin{displaymath}
  \EQpar p=\Big[t(\bar s,p)=\UN\land\lland_{k\leq \kappa}s_k(\bar
  p,q)\neq\UN\Big].
\end{displaymath}
Like the usual discriminant does for polynomial equations, we want to
find IPC formulas in the coefficients of the system whose values at
any $l$\--tuple $\bar a$ in a Heyting algebra $A$ decides if $\EQpar
a$ has a solution in some extension of $A$.

We define the {\df discriminant} of $t$ as
\begin{displaymath}
  \Delta_t(\bar p)=\mmeet_{B\in\cD_t}\big(\varphi_B(\bar p)\to\psi_B(\bar p)\big)
\end{displaymath}
where $\cD_t$ is the set of balls $B$ in $\ESFbar p$ of radius
$2^{-R_{l,d}}$ such that
\begin{displaymath}
  t(\bar p,q)\meet \varphi_B(\bar p) \leq \psi_B(\bar p).
\end{displaymath}
For each $k\leq\kappa$, the {\df co\--discriminant} of $(t,s_k)$ is
\begin{displaymath}
  \nabla_{t,s_k}(\bar p)=\mmeet_{B\in\cD'_{t,s_k}}\big(\varphi_B(\bar p)\to\psi_B(\bar p)\big)
\end{displaymath}
where $\cD'_{t,s_k}$ is the set of balls $B$ in $\ESFbar p$ of radius
$2^{-R_{l,d}}$ such that 
\begin{displaymath}
  t(\bar p,q)\meet \varphi_B(\bar p) \nleq
  s_k(\bar p,q)\join \psi_B(\bar p).
\end{displaymath}

\begin{theorem}\label{th:disc}
  For every $l$\--tuple $\bar a$ in a Heyting algebra
  $A$, the system 
  \begin{displaymath}
    \EQpar p=\Big[t(\bar s,p)=\UN\land\lland_{k\leq \kappa}s_k(\bar
    p,q)\neq\UN\Big].
  \end{displaymath}
  has a solution in some extension of $A$ if and only if $\Delta_t(\bar
  a)=\UN$ and $\nabla_{t,s_k}(\bar a)\neq\UN$ for every $k\leq\kappa$. 
\end{theorem}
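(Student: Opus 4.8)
The plan is to reduce the statement to the equivalence (\ref{it:zero})$\Leftrightarrow$(\ref{it:Esa}) of Lemma~\ref{le:zero-Ker-Esa}, and then to translate the Esakia-space condition there into the stated equations on $\Delta_t$ and $\nabla_{t,s_k}$ using the Open Mapping Theorem for $\ESFbar p$ (Theorem~\ref{th:RG}) together with Claim~\ref{cl:phiB-y-Ker}. As in the proofs of the preceding results, I would first replace $A$ by the subalgebra generated by $\bar a$, so that $\pi_{\bar a}$ is surjective and $\fKer\pi_{\bar a}$ is a fixed filter of $\Freebar p$; write $\cY=\{y\in\ESFbar p\tq\fKer\pi_{\bar a}\subseteq y\}$. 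By Lemma~\ref{le:zero-Ker-Esa}, $\EQpar a$ has a solution in some extension of $A$ if and only if: (a) every $y\in\cY$ lifts to some $x\in\oc t(\bar p,q)\fc$ with $x\cap\Freebar p=y$, and (b) for each $k\leq\kappa$ at least one such $y$ lifts to an $x\in\oc t\fc$ that avoids $\oc s_k\fc$.

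The heart of the argument is to show that condition (a) is equivalent to $\Delta_t(\bar a)=\UN$, and condition (b) (for a fixed $k$) is equivalent to $\nabla_{t,s_k}(\bar a)\neq\UN$. For (a): by Claim~\ref{cl:phiB-y-Ker}, a ball $B$ of radius $2^{-R_{l,d}}$ meets $\cY$ if and only if $\varphi_B(\bar a)\to\psi_B(\bar a)\neq\UN$, so $\Delta_t(\bar a)=\UN$ says exactly that every ball $B$ meeting $\cY$ lies in $\cD_t$, i.e. satisfies $t(\bar p,q)\meet\varphi_B(\bar p)\leq\psi_B(\bar p)$ in $\Free{\bar p,q}$. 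I must therefore show: every $y\in\cY$ lifts into $\oc t\fc$ $\iff$ for every ball $B$ meeting $\cY$ one has $t\meet\varphi_B\leq\psi_B$. The ($\Rightarrow$) direction is the easy one: if some ball $B$ meeting $\cY$ has $t\meet\varphi_B\nleq\psi_B$, pick a prime filter $x$ of $\Free{\bar p,q}$ with $t\meet\varphi_B\in x$, $\psi_B\notin x$; then $x\in\oc t\fc$, and $x\cap\Freebar p$ lies in $B$ but is \emph{not} forced to contain $\fKer\pi_{\bar a}$ — here I would instead argue contrapositively and more carefully, producing a $y\in\cY$ with no lift. Concretely, take $y\in B\cap\cY$; any lift $x$ of $y$ into $\oc t\fc$ would give $t\meet\varphi_B\in x$ (as $\varphi_B\in y\subseteq x$) and $\psi_B\notin x$ (as $\psi_B\notin y$ and $\psi_B\in\Freebar p$), contradicting... no — that only fails if \emph{no} such $x$ exists, which is not automatic. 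So the correct route is the ($\Leftarrow$) direction via the Open Mapping Theorem: assuming $t\meet\varphi_B\leq\psi_B$ fails to obstruct, i.e. every ball meeting $\cY$ is in $\cD_t$, I show each $y\in\cY$ lifts. Fix $y\in\cY$. Since $\oc t\fc$ is clopen hence compact and $f^\uparrow$ (intersection with $\Freebar p$) is continuous, $f^\uparrow(\oc t\fc)$ is a closed subset of $\ESFbar p$; I claim $y$ is in it. If not, $y$ has a patch-neighbourhood — indeed a ball $B'$ of some radius $2^{-m}$ — disjoint from $f^\uparrow(\oc t\fc)$, which translates to $t\meet\varphi_{B'}\leq\psi_{B'}$; I then use the hypothesis that balls of the coarser radius $2^{-R_{l,d}}$ already detect this, i.e. the definition of $\cD_t$ at radius $R_{l,d}$, to get a contradiction with $y\in\cY$ and Claim~\ref{cl:phiB-y-Ker}. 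The role of $R_{l,d}$ from Theorem~\ref{th:RG} is precisely to guarantee that radius $2^{-R_{l,d}}$ suffices: if $\ddbar p(y,f^\uparrow(x_0))<2^{-R_{l,d}}$ for some $x_0\in\oc t\fc$ then $y$ itself lifts into a neighbourhood of $x_0$, which one can arrange to stay inside $\oc t\fc$ by taking $d=0$ (so $\dd{\bar p,q}(x,x_0)<1$ forces $t\in x$ since $t\in x_0$ and $t$ has degree... —this needs the degree-$d$ bound, so one takes $d=\deg t$). This calibration of the radius is the main obstacle and the place where the Open Mapping Theorem is genuinely used.

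For (b), the co-discriminant: fix $k$. I want: some $y\in\cY$ lifts to $x\in\oc t\fc\setminus\oc s_k\fc$ $\iff$ $\nabla_{t,s_k}(\bar a)\neq\UN$, i.e. $\iff$ some ball $B\in\cD'_{t,s_k}$ has $\varphi_B(\bar a)\to\psi_B(\bar a)\neq\UN$, i.e. (by Claim~\ref{cl:phiB-y-Ker}) some ball $B$ meeting $\cY$ satisfies $t\meet\varphi_B\nleq s_k\join\psi_B$. The ($\Leftarrow$) direction: given such $B$ meeting $\cY$, pick a prime filter $x$ of $\Free{\bar p,q}$ with $t\meet\varphi_B\in x$ and $s_k\join\psi_B\notin x$; then $x\in\oc t\fc$, $x\notin\oc s_k\fc$, and $y:=x\cap\Freebar p$ satisfies $\varphi_B\in y$, $\psi_B\notin y$ so $y\in B$; the remaining point is that $y\in\cY$, for which I again lean on the radius $R_{l,d}$ being large enough (via Theorem~\ref{th:RG} applied to this $x$, or rather its relevant ball) that membership of $y$ in $B$ together with $B$ meeting $\cY$ forces $\fKer\pi_{\bar a}\subseteq y$ — more precisely, I should run the argument the other way: by (a), which I'll have already proved equivalent to $\Delta_t(\bar a)=\UN$, and noting that the existence of solutions in extensions presupposes $\Delta_t(\bar a)=\UN$, every $y\in\cY$ lifts into $\oc t\fc$; combined with the second part of Lemma~\ref{le:zero-Ker-Esa}(\ref{it:Esa}) I get (b). The ($\Rightarrow$) direction: given $y\in\cY$ and $x\in\oc t\fc\setminus\oc s_k\fc$ with $x\cap\Freebar p=y$, let $B$ be the ball of radius $2^{-R_{l,d}}$ centred at $y$; then $\varphi_B\in y\subseteq x$, $\psi_B\notin y$ so $\psi_B\notin x$, and $t\in x$, $s_k\notin x$, whence $t\meet\varphi_B\in x$ while $s_k\join\psi_B\notin x$, so $t\meet\varphi_B\nleq s_k\join\psi_B$, i.e. $B\in\cD'_{t,s_k}$; and $B$ meets $\cY$ since $y\in B\cap\cY$, so by Claim~\ref{cl:phiB-y-Ker} $\varphi_B(\bar a)\to\psi_B(\bar a)\neq\UN$ and hence $\nabla_{t,s_k}(\bar a)\neq\UN$. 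Assembling (a) and (b) via Lemma~\ref{le:zero-Ker-Esa} yields the theorem. The one delicate bookkeeping point throughout is that $\cD_t$ and $\cD'_{t,s_k}$ are defined using balls of the specific radius $2^{-R_{l,d}}$ coming from Theorem~\ref{th:RG}, and one must check that this radius is simultaneously coarse enough that the finite meets defining $\Delta_t,\nabla_{t,s_k}$ make sense as IPC formulas (there are finitely many such balls, each with a chosen pair $(\varphi_B,\psi_B)$ of degree $<R_{l,d}$) and fine enough that it correctly detects liftability into $\oc t\fc$; that balance is exactly what the Open Mapping Theorem provides.
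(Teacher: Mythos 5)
Your overall architecture --- reduce to the equivalence (\ref{it:zero})$\Leftrightarrow$(\ref{it:Esa}) of Lemma~\ref{le:zero-Ker-Esa}, then translate the lifting conditions into statements about balls of radius $2^{-R_{l,d}}$ via Claim~\ref{cl:phiB-y-Ker} and the Open Mapping Theorem --- is exactly the paper's, but two genuine errors derail the execution. First, you have the meaning of $\Delta_t(\bar a)=\UN$ backwards. Since $\Delta_t(\bar a)$ is the meet of $\varphi_B(\bar a)\to\psi_B(\bar a)$ over $B\in\cD_t$, and since Claim~\ref{cl:phiB-y-Ker} says $\varphi_B(\bar a)\to\psi_B(\bar a)=\UN$ exactly when $B$ contains no $y\supseteq\fKer\pi_{\bar a}$, the condition $\Delta_t(\bar a)=\UN$ says that every ball meeting $\cY$ lies \emph{outside} $\cD_t$, i.e.\ satisfies $t\meet\varphi_B\nleq\psi_B$ --- not $t\meet\varphi_B\leq\psi_B$ as you assert. (Sanity check: if $y\in B\cap\cY$ lifts to $x\in\oc t\fc$, then $\varphi_B\in y\subseteq x$ and $\psi_B\notin y$, hence $\psi_B\notin x$, so $t\meet\varphi_B\nleq\psi_B$; the balls in $\cD_t$ are the \emph{obstructions}.) This sign error is why your ``easy direction'' collapses into ``contradicting\dots no'': with the correct orientation it is immediate --- if some $B\in\cD_t$ met $\cY$, any lift of a $y\in B\cap\cY$ into $\oc t\fc$ would contain $t\meet\varphi_B$, hence $\psi_B$, hence $\psi_B\in y$, contradicting $y\in B$ --- while the converse is where the Open Mapping Theorem enters: $B\notin\cD_t$ produces $x'$ with $t\meet\varphi_B\in x'$ and $\psi_B\notin x'$, so $y':=x'\cap\Freebar p\in B$, so $\ddbar p(y,y')<2^{-R_{l,d}}$, and Theorem~\ref{th:RG} lifts $y$ to some $x$ with $x\cap\Freebar p=y$ and $x\sim_d x'$, which contains $t$ because $\deg t\leq d$. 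Your ``closed image plus calibration'' sketch gestures at this but, under the inverted hypothesis, there is no $x_0\in\oc t\fc$ to calibrate against.

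Second, your treatment of the direction ``$\nabla_{t,s_k}(\bar a)\neq\UN\Rightarrow$ (b)'' is not a proof. The trace $y'=x'\cap\Freebar p$ of your witness $x'$ has no reason to contain $\fKer\pi_{\bar a}$, and no choice of radius can ``force'' membership in a ball to imply containing the kernel; and your fallback --- deriving (b) from (a) ``combined with the second part of Lemma~\ref{le:zero-Ker-Esa}(\ref{it:Esa})'' --- is circular, since that second part \emph{is} (b) and is only available once solvability is already known. The missing step is again a transport by the Open Mapping Theorem: from $t\meet\varphi_B\nleq s_k\join\psi_B$ get $x'$ containing $t$ and $\varphi_B$ but neither $s_k$ nor $\psi_B$, so $y'=x'\cap\Freebar p\in B$; separately, $\varphi_B(\bar a)\to\psi_B(\bar a)\neq\UN$ and Claim~\ref{cl:phiB-y-Ker} give some $y\in B\cap\cY$; since $y,y'\in B$, Theorem~\ref{th:RG} yields $x$ with $x\cap\Freebar p=y$ and $x\sim_d x'$, whence $t\in x$ and $s_k\notin x$ because both have degree $\leq d$. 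Your converse direction (b)$\Rightarrow\nabla_{t,s_k}(\bar a)\neq\UN$ is correct and matches the paper.
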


The proof is based on Lemma~\ref{le:zero-Ker-Esa} and the Open
Mapping Theorem. 

\begin{proof} (of Theorem~\ref{th:disc})
  By Lemma~\ref{le:zero-Ker-Esa} the system $\EQpar a$ has a solution in
  some extension of $A$ if and only if $(i)$ for every $y\in\ESFbar p$
  containing $\fKer\pi_{\bar a}$ there is $x\in\ESF{\bar p,q}$ such that
  $x\cap\ESFbar p=y$ and $t\in x$, and for each $k\leq\kappa$; $(ii)_k$ there are
  $y\in\ESFbar p$ and $x\in\ESF{\bar p,q}$ such that $x\cap\ESFbar p=y$
  contains $\fKer\pi_{\bar a}$, $t\in x$ and $s_k\notin x$. 

  Now for every ball $B$ in $\ESFbar p$ of radius $2^{-R_{l,d}}$ such
  that $\varphi_B(\bar a)\to\psi_B(\bar a)\neq\UN$, by Claim~\ref{cl:phiB-y-Ker}
  there is $y\in B$ containing $\fKer\pi_{\bar a}$, so $(i)$ implies
  that there is also $x\in\ESF{\bar p,q}$ such that $x\cap\ESFbar p=y$
  and $t\in x$. In particular $x$ and $y$ contain the same elements of
  $\Freebar p$, hence $\psi_B\in x$ and $\psi_B\notin x$ since $y\in
  B=\oc\varphi_B\fc\setminus\oc\psi_B\fc$. Altogether $t\meet\varphi_B\in x$ and $\psi_B\notin x$,
  consequently $t\meet\varphi_B\nleq\psi_B$ in $\Free{\bar p,q}$, that is $B\notin\cD_t$.
  Equivalently, $(i)$ implies that $\varphi_B(\bar a)\to\psi_B(\bar a)=\UN$ for
  every ball $B\in\cD_t$, that is $\Delta_t(\bar a)=\UN$. 

  Conversely, let us prove that $\Delta_t(\bar a)=\UN$ implies $(i)$. Pick
  any $y\in\ESFbar p$ containing $\fKer\pi_{\bar a}$. Let $B$ be the
  unique ball in $\ESFbar p$ of radius $2^{-R_{l,d}}$ containing $y$.
  Then $\varphi_B(\bar a)\to\psi_B(\bar a)\neq\UN$ by Claim~\ref{cl:phiB-y-Ker}, so
  $B\notin\cD_t$ because $\Delta_t(\bar a)=\UN$. By definition of $\cD_t$ we
  have $t\meet\varphi_B\nleq\psi_B$, so there is a point $x'\in\ESF{\bar p,q}$ such that
  $t\meet\varphi_B\in x'$ and $\psi_B\notin x'$. Let $y'=x'\cap\ESFbar p$, since $y'$ and
  $x'$ contain the same elements in $\Freebar p$ we have $\varphi_B\in y'$ and
  $\psi_B\notin y'$, that is $y'\in\oc\varphi_B\fc\setminus\oc\psi_B\fc=B$. Now, since $y\in B$ and
  $B$ has radius $2^{-R_{l,d}}$, the Open Mapping Theorem gives
  $x\in\ESF{\bar p,q}$ such that $x\sim_d x'$ and $x\cap\ESFbar p=y$. In
  particular $t\in x'$ implies that $t\in x$, that is $x\in\oc t\fc$, which
  proves $(i)$.

  Fix now some $k\leq;K$ and assume that $\nabla_{t,s}(\bar a)\neq\UN$, so there
  is a ball $B$ in $\ESFbar p$ of radius $2^{-R_{l,d}}$ such that:
  $(a)_k$ $t\meet\varphi_B\nleq s_k\join\psi_B$, and; $(b)$ $\varphi_B(\bar a)\to\psi_B(\bar a)\neq\UN$.
  Then $(a)_k$ gives some $x'\in\ESF{\bar p,q}$ which contains $t$ and
  $\varphi_B$ but neither $s_k$ nor $\psi_B$. Let $y'=x'\cap\Freebar p$, then
  $\varphi_B\in y'$ and $\psi_B\notin y'$, because $y'$ and $x'$ contain the same
  elements in $\Freebar p$. So $y'$ belongs to
  $\oc\varphi_B\fc\setminus\oc\psi_B\fc=B$. By $(b)$ and Claim~\ref{cl:phiB-y-Ker}
  there is some $y\in B$ which contains $\fKer\pi_{\bar a}$. Since
  $y'=x'\cap\Freebar p$ and $y,y'\in B$, the Open Mapping Theorem then
  gives $x\in\ESF{\bar p,q}$ such that $x\sim_n x'$ and $x\cap\Freebar p=y$.
  In particular $t\in x$ and $s_k\notin x$ because $t$, $s_k$ have degree $\leq
  d$ and $x\sim_d x'$, which proves $(ii)_k$. 

  It only remains to check that $(ii)_k$ implies that $\nabla_{t,s_k}(\bar
  a)\neq\UN$. By assumption $(ii)_k$ there are some $y\in\ESFbar p$ and
  $x\in\ESF{\bar p,q}$ such that  $t\in x$ and $s_k\notin x$, and such that
  $y=x\cap\Freebar p$ contains $\fKer\pi_{\bar a}$. Let $B$ be the unique
  ball in $\ESFbar p$ of radius $2^{-R_{l,d}}$ containing $y$. Then
  $\varphi_B\in x$ and $\psi_B\notin x$ because $x$ and $y$ contain the same
  elements of $\Freebar p$. Altogether $t\meet\varphi_B\in x$ and $s_k\join\psi_B\notin x$, so
  $t\meet\varphi_B\nleq s_k\join\psi_B$, that is $B\in\cD'_{t,s_k}$. Moreover $y$ belongs to
  $B$ and contains $\fKer\pi_{\bar a}$ so $\varphi_B(\bar a)\to\psi_B(\bar a)\neq\UN$,
  and {\it a fortiori} $\nabla_{t,s_k}(\bar a)\neq\UN$.
\end{proof}

\section{Appendix: the Finite Extension Property}
\label{se:appendix}

We collect here all the results from \cite{darn-junk-2011} needed for
the proof of the Finite Extension Property (Theorem~\ref{th:FEP-apx}
below). The classical Finite Model Property only concerns
satisfiability of formulas $\exists \bar p,\,t(\bar p)\neq\UN$ with $t(\bar p)$
an IPC\--formula. The slight improvement below is Proposition~8.1 in
\cite{darn-junk-2011}. 

\begin{proposition}\label{pr:EFMP} 
  Let $\cV$ be a variety of Heyting algebras having the Finite
  Model Property. If an existential $\LHA$\--formula is satisfied
  in some $\cV$\--algebra, then it is satisfied in a finite
  $\cV$\--algebra.
\end{proposition}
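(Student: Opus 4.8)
The plan is to reduce an arbitrary existential $\LHA$-sentence to a single ``system'', and then to realise that system in a finite $\cV$-algebra obtained by a \emph{quotient-then-product} construction from finitely many algebras supplied by the Finite Model Property.

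First I would normalise. Putting the quantifier-free matrix in disjunctive normal form and using that in any Heyting algebra $u\leq v\iff u\to v=\UN$ and $u=v\iff (u\to v)\meet(v\to u)=\UN$, every quantifier-free $\LHA$-formula is equivalent to a finite disjunction of formulas $t(\bar p)=\UN\land\lland_{k\leq\kappa}s_k(\bar p)\neq\UN$ (a conjunction of equalities ``$\,\cdot=\UN\,$'' being contracted into one by taking the meet). Hence an existential sentence is equivalent to a finite disjunction of sentences $\exists\bar p\,[t(\bar p)=\UN\land\lland_{k\leq\kappa}s_k(\bar p)\neq\UN]$, and such a disjunction holds in some $\cV$-algebra iff one of its disjuncts does. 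So it suffices to realise one such system in a finite $\cV$-algebra; the case $\kappa=0$ is witnessed by the one-element algebra, so I may assume $\kappa\geq 1$.

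Now suppose $\bar a$ is a tuple in a $\cV$-algebra $A$ with $t(\bar a)=\UN$ and $s_k(\bar a)\neq\UN$ for all $k$. The point is \emph{not} to apply the Finite Model Property to the $s_k$ separately --- a product of the resulting algebras would not satisfy $t=\UN$ --- but to the implications: for each $k$ the IPC formula $t(\bar p)\to s_k(\bar p)$ takes the value $t(\bar a)\to s_k(\bar a)=s_k(\bar a)\neq\UN$ at $\bar a$, hence is not valid in $\cV$, so by the Finite Model Property there are a finite $\cV$-algebra $A_k$ and a tuple $\bar a_k$ with $t(\bar a_k)\nleq s_k(\bar a_k)$. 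Passing to the quotient $A_k'=A_k/\big(t(\bar a_k)\uparrow\big)$ by the principal filter generated by $t(\bar a_k)$ --- still a finite $\cV$-algebra, being a homomorphic image of $A_k$ --- and letting $\bar a_k'$ be the image of $\bar a_k$, one gets $t(\bar a_k')=\UN$, while $s_k(\bar a_k')=\UN$ would force $t(\bar a_k)\leq s_k(\bar a_k)$, contradicting the choice of $A_k$; so $s_k(\bar a_k')\neq\UN$. Finally I would take $A'=\prod_{k=1}^{\kappa}A_k'$, a finite $\cV$-algebra, with $\bar a'$ the tuple whose $i$-th component is $(a_{1,i}',\dots,a_{\kappa,i}')$: the coordinatewise computation gives $t(\bar a')=\UN$ since each $t(\bar a_k')=\UN$, and for each $j$ the $j$-th coordinate of $s_j(\bar a')$ is $s_j(\bar a_j')\neq\UN$, so $s_j(\bar a')\neq\UN$ in $A'$. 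Thus $A'$ realises the system.

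I expect the only genuinely delicate point to be the bookkeeping around the quotient: one must feed the Finite Model Property the implication $t\to s_k$ (rather than $s_k$ alone, or $t$ alone) precisely so that forcing $t=\UN$ by a quotient is guaranteed to keep $s_k\neq\UN$; the rest is the elementary normalisation above together with the standard closure of a variety under homomorphic images and products.
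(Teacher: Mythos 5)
Your argument is correct. The normalisation into a disjunction of systems $t(\bar p)=\UN\land\lland_{k\leq\kappa}s_k(\bar p)\neq\UN$ is the standard one (and is the same normal form the paper uses for its ``systems''), and the three subsequent steps all check out: applying the Finite Model Property to the implications $t\to s_k$ (which take the value $s_k(\bar a)\neq\UN$ at the witness, since $t(\bar a)=\UN$) is exactly the right move; the quotient by the principal filter $t(\bar a_k)\uparrow$ forces $t=\UN$ while $s_k$ survives precisely because $t(\bar a_k)\nleq s_k(\bar a_k)$; and the finite product then satisfies $t=\UN$ coordinatewise while each $s_j$ is refuted in its $j$-th coordinate. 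Closure of a variety under homomorphic images and finite products keeps everything in $\cV$ and finite. Note that the paper itself gives no proof of this proposition --- it only cites Proposition~8.1 of \cite{darn-junk-2011} --- so there is no in-paper argument to compare against; your quotient-then-product construction is a valid, self-contained proof, and your closing remark correctly identifies the one genuinely non-obvious choice (feeding $t\to s_k$, rather than $t$ or $s_k$ separately, to the Finite Model Property).
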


Recall that the {\df height} of a poset $(E,\leq)$ is the maximal integer
$n$ such that there exists $a_0<a_1<\cdots<a_n$ in $E$. If no such maximum
exists we say that $E$ has infinite length. The {\df dimension} of a
lattice $L$, denoted $\dim L$, is the height of its prime filter
spectrum (or equivalently of its prime ideal spectrum). This is also
the classical Krull dimension of $\Spec^\uparrow L$ and of $\Spec_\downarrow L$. 

We introduced in \cite{darn-junk-2011} a notion of dimension and
co-dimension for the elements of a lattice, which turned out to have
better properties in co-Heyting algebras. Given an element $a$ of a
Heyting algebra $A$, we define the {\df dual co-dimension} of $a$ in
$A$ to be the co-dimension of $a$ in $A^\text{op}$, the co-Heyting
algebra obtained by reversing the order of $A$. Equivalently, by
Theorem~3.8 in \cite{darn-junk-2011}, it is the greatest integer $d$
such that 
\begin{displaymath}
  \exists x_0,\dots,x_d\in A,\ a\geq x_d\gg\cdots\gg x_0.
\end{displaymath}
\begin{remark}\label{re:codim-sub}
  An immediate consequence of this characterisation is that if $B$
  is a Heyting algebra such that $a\in B\subseteq A$, the dual co-dimension of
  $a$ in $A$ is greater than or equal to its dual co-dimension in
  $B$.
\end{remark}

We let $dA$ denote\footnote{This is slightly different from our
  notation in \cite{darn-junk-2011}, where we let $dA$ be the set of
elements of co-dimension $\geq d$.} the set of elements in $A$ with
dual co-dimension $>d$. Let us recall some of the properties of the
dual co-dimension proved, after reversing the order, in
\cite{darn-junk-2011}. 

{\sl
  \begin{description}
    \item
      [\CD 1] A Heyting algebra $A$ has dimension $\leq d$ if and only
      if every $a\in A\setminus\{\UN\}$ has dual co-dimension $\leq d$ in $A$, that
      is $(d+1)A=\{\UN\}$ (Remark~2.1).
    \item
      [\CD 2] There is an IPC formula $\varepsilon_{l,d}(\bar p)$ such that
      for every Heyting algebra $A$ generated by an $l$\--tuple
      $\bar a$, $dA$ is the filter generated in $A$ by $\varepsilon_{l,d}(\bar
      a)$ (Proposition~8.2).
    \item
      [\CD 3] For every finitely generated Heyting algebra $A$,
      $A/dA$ is finite (Corollary~5.5 and Corollary 7.5).
  \end{description}
}

\begin{remark}\label{re:dim-sub}
  From \CD 1 and Remark~\ref{re:codim-sub} it follows that if a
  Heyting algebra $A$ has finite dimension $d$, then every Heyting
  algebra contained in $A$ has dimension $\leq d$. This also follows
  from \cite{hoso-1967} and \cite{ono-1970}, who proved {\it mutatis
  mutandis}, that the class of Heyting algebras of dimension $\leq d$
  is a variety (a direct proof of this is given in
  \cite{darn-junk-2011}, Proposition~3.12). 
\end{remark}

\begin{lemma}\label{le:FEP}
  Given an integer $d\geq0$ there is an
  integer $n_d\geq0$ such that for every Heyting algebra $H$ of dimension
  $\leq d$, every $\bar a\in H^l$ and every $l$\--tuple $\bar a'$ in a Heyting
  algebra $H'$, if $\Th_n(\bar
  a)=\Th_n(\bar a')$ then $\fKer\pi_{\bar a}=\fKer\pi_{\bar a'}$. 
\end{lemma}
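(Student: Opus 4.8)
The goal is to bound, uniformly in $l$, how much of the IPC-theory of a tuple $\bar a$ one must know in order to pin down its full filter kernel, provided the ambient algebra $H$ has dimension $\leq d$. The natural strategy is induction on $d$, peeling off one layer of dual co-dimension at each step using the axiom \CD 3 and the definability provided by \CD 2. For the base case $d=0$, \CD 1 says $1\cdot H = \{\UN\}$, so $H$ is the trivial chain $\{\ZERO,\UN\}$ in each coordinate; the Heyting algebra generated by $\bar a$ is then a finite Boolean algebra whose isomorphism type is determined by which of the finitely many IPC formulas of degree $0$ (i.e.\ the lattice terms) hold at $\bar a$. So $n_0$ can be taken to be $0$ (or $1$ to be safe), and $\Th_{n_0}(\bar a)=\Th_{n_0}(\bar a')$ forces the two generated subalgebras to be isomorphic via $a_i\mapsto a_i'$, i.e.\ $\fKer\pi_{\bar a}=\fKer\pi_{\bar a'}$.

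For the inductive step, suppose $n_{d-1}$ has been constructed and let $H$ have dimension $\leq d$. Replace $H$ by the subalgebra generated by $\bar a$; by Remark~\ref{re:dim-sub} it still has dimension $\leq d$. Consider the filter $dH = d'H$ where I write $d'$ for the index ``$d$'' appearing in \CD 2--\CD 3 chosen so that $H/dH$ has dimension $\leq d-1$: concretely, by \CD 1 the filter $1\cdot H$ of elements of dual co-dimension $>0$ is proper and $H/(1\cdot H)$ kills exactly the top layer, but more usefully $H/(d\cdot H)$ with the ``$d$'' of \CD 3 equal to our $d$ would be finite, which is too strong; instead I use the filter $F = 1\cdot H$ (elements of dual co-dimension $>0$), so that $H/F$ has dimension $0$ — wait, that over-collapses. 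The correct bookkeeping: I use \CD 3 directly with parameter $d$, giving that $H/dH$ is \emph{finite}, and separately I use \CD 2 to get an IPC formula $\varepsilon_{l,d}(\bar p)$ generating $dH$. Knowing $\Th_n(\bar a)$ for $n$ large enough to include $\varepsilon_{l,d}$ and all degree-$0$ facts about the finitely many generators of $H/dH$ determines: (i) the finite algebra $H/dH$ up to isomorphism together with the images of $\bar a$ in it (since it is finite of bounded size, its full diagram is captured by formulas of bounded degree); and (ii) by the induction hypothesis applied inside $dH$ viewed as a co-Heyting-like filter — more precisely, applied to the tuple $\varepsilon_{l,d}(\bar a), a_1, \dots$ generating the relevant finitely generated algebra of strictly smaller dimension — the filter kernel of that sub-tuple. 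Combining a bounded-degree description of $H/dH$ with a degree-$\leq n_{d-1}$ description of the ``interior'' $dH$ and then invoking Factorisation of Morphisms (Fact~\ref{fa:facto-morph}) reconstructs $\fKer\pi_{\bar a}$ entirely, so one sets $n_d$ to be the maximum of $n_{d-1}$, the degree of $\varepsilon_{l,d}$, and the bound coming from the size of $H/dH$.

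The main obstacle is step (ii): making precise how the induction hypothesis applies to the filter $dH$. A filter of a Heyting algebra is not a Heyting subalgebra, so one cannot literally say ``$dH$ has dimension $\leq d-1$''. The fix is to work with the quotient: $H/(1\cdot H)$ has dimension $0$ is again too coarse, so instead one should iterate through the dimension by a descending chain of filters $H \supseteq 1\cdot H \supseteq 2\cdot H \supseteq \cdots \supseteq (d{+}1)\cdot H = \{\UN\}$, where by \CD 1 each quotient $(i\cdot H)/((i{+}1)\cdot H)$ has dimension $0$ in the appropriate sense, and at each stage \CD 2 provides a bounded-degree formula and \CD 3 provides finiteness of the relevant quotient. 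I expect the cleanest route is to prove the lemma in the form: the isomorphism type of the pair $(\langle\bar a\rangle, \bar a)$ is determined by $\Th_{n_d}(\bar a)$, by simultaneously reconstructing the finite quotient $\langle\bar a\rangle/d\langle\bar a\rangle$ (via \CD 3, bounded diagram) and, recursively, the structure of the ideal-like piece it was extended by, gluing them with the Isomorphism Theorems for Heyting algebras recalled after Fact~\ref{fa:ideal-quo}. Carefully choosing $n_d$ and checking that all the finitely many bounded-degree formulas involved genuinely suffice is the routine-but-delicate part; the conceptual content is entirely in \CD 2 and \CD 3.
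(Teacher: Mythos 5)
There is a genuine gap: your inductive step is never actually carried out, and you identify the obstruction yourself without resolving it. A filter such as $dH$ is not a Heyting subalgebra, so the induction hypothesis cannot be applied to it, and the proposed repair --- ``gluing'' a bounded-degree description of $H/dH$ to a recursive description of the ``interior'' via Factorisation of Morphisms --- is not an argument: you never say which tuple of smaller-dimensional data the hypothesis is applied to, nor why a bounded-degree theory of $\bar a$ determines that data. The various false starts in the write-up (``wait, that over-collapses'') are symptoms of the fact that the decomposition $H\supseteq 1\cdot H\supseteq\cdots$ does not interact cleanly with $\fKer\pi_{\bar a}$, which lives in the \emph{free} algebra $\Freebar p$, not in $H$. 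A secondary error: the base case claim that $n_0$ can be taken to be $0$ is false. Dimension $0$ means $H$ is a Boolean algebra, not the two-element chain, and degree-$0$ (purely lattice-theoretic) information about $\bar a$ does not determine $\fKer\pi_{\bar a}$ --- e.g.\ it cannot detect whether $a_1\meet a_2=\ZERO$, i.e.\ whether $(p_1\meet p_2)\to\ZERO$ lies in the kernel. (Your parenthetical ``$1$ to be safe'' happens to be correct, consistently with Remark~\ref{re:dim-N-deg-2N+1}.)

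The missing idea is that no induction on $d$ is needed: everything should be transported to the free algebra. Since $A=\langle\bar a\rangle$ has dimension $\leq d$, \CD 1 and \CD 2 give $\varepsilon_{l,d}(\bar a)=\UN$, hence $(d+1)\Freebar p\subseteq\fKer\pi_{\bar a}$. Now apply \CD 3 to $\Freebar p$ itself: $\Freebar p/(d+1)\Freebar p$ is finite, so there are only finitely many prime filters of $\Freebar p$ above $(d+1)\Freebar p$, and since $(d+1)\Freebar p$ is principal each of them is principal, generated by some $\xi_i(\bar p)$. Taking $n_d$ to be the maximum of the degrees of $\varepsilon_{l,d}$ and of the $\xi_i$, the hypothesis $\Th_{n_d}(\bar a)=\Th_{n_d}(\bar a')$ forces $(d+1)\Freebar p\subseteq\fKer\pi_{\bar a'}$ as well and makes the (finite) set of prime filters containing $\fKer\pi_{\bar a}$ coincide with that for $\fKer\pi_{\bar a'}$; Fact~\ref{fa:radical} (a filter is the intersection of the primes above it) then yields $\fKer\pi_{\bar a}=\fKer\pi_{\bar a'}$. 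The two steps your plan lacks are precisely the principality (with bounded-degree generators) of the primes above $(d+1)\Freebar p$ and the final appeal to Fact~\ref{fa:radical}; without them the ``routine-but-delicate part'' you defer is where the proof actually lives.
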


\begin{proof}
  By Remark~\ref{re:dim-sub}, for every Heyting algebra $H$ of dimension
  $\leq d$, and every $\bar a\in H^l$, the Heyting algebra $A$ generated by
  $\bar a$ in $H$ has dimension $\leq d$. Then
  $(d+1)A=\UN$ by \CD 1, that is $\varepsilon_{l,d}(\bar a)=\UN$ by \CD 2.
  In particular, $\varepsilon_{l,d}(\bar p)\in\fKer\pi_{\bar a}$ hence
  $(d+1)\Freebar p$ is contained in $\fKer\pi_{\bar a}$. By \CD 3,
  $\Freebar p/(d+1)\Freebar p$ is finite. Hence there exists only
  finitely many prime filters $y$ in $\Freebar p$ which contain
  $(d+1)\Freebar p$, and since the filter $(d+1)\Freebar p$ is
  principal, every such $y$ is principal (here we use
  Fact~\ref{fa:ideal-quo}). 
  
  So let $\xi_1(\bar p),\dots,\xi_\alpha(\bar p)$ be a list of IPC formulas such
  that every $y\in\ESFbar p$ containing $(d+1)\Freebar p$ is generated
  by $\xi_i(\bar p)$ for some $i\leq \alpha$. Let $n_d$ be the maximum of the
  degrees of the $\xi_i(\bar p)$'s and of $\varepsilon_{l,d}(\bar p)$ and assume
  that $\bar a'\approx_n\bar a$. Then $\varepsilon_{l,d}(\bar a')=\UN$, that is
  $\varepsilon_{l,d}(\bar p)\in\fKer\pi_{\bar a'}$ so $(d+1)\Freebar p$ is contained
  in $\fKer\pi_{\bar a'}$. Thus, for every $y\in\ESFbar p$, $y$ contains
  $\fKer\pi_{\bar a}$ (resp. $\fKer\pi_{\bar a'}$) if and only if $y$ is
  generated by $\xi_i(\bar p)$ for some $i\leq \alpha$ such that $\xi_i(\bar
  a)=\UN$ (resp. $\xi_i(\bar a')=\UN$). Since $\Th_n(\bar a)=\Th_n(\bar
  a')$ by assumption, $\xi_i(\bar a)=\UN$ if and only if $\xi_i(\bar
  a')=\UN$. This ensures that $\fKer\pi_{\bar a}$ and $\fKer\pi_{\bar a'}$
  are contained in the same prime filters of $\Freebar p$. By
  Fact~\ref{fa:radical} it follows that $\fKer\pi_{\bar a}=\fKer\pi_{\bar
  a'}$.
\end{proof}

\begin{remark}\label{re:dim-N-deg-2N+1}
  In \cite{bell-1986} an explicit formula $\xi(\bar p)$, with degree
  $\leq 2d+1$, is given for every join-irreducible element of $\Freebar
  p$ of foundation rank $\leq d$ (see also Theorem~3.3 in
  \cite{darn-junk-2010b}). A prime filter $y$ of $\Freebar p$
  contains $(d+1)\Freebar p$ if and only if is generated by such a
  join-irreducible element $\xi(\bar p)$, so we can take $n_d=2d+1$ in
  Lemma~\ref{le:FEP}.
\end{remark}

\begin{theorem}[Finite Extension Property]\label{th:FEP-apx}
  Let $\cV$ be a variety of Heyting algebras having the Finite Model
  Property. If an existential $\LHA$\--formula with parameters in a
  finite $\cV$\--algebra $A$ is satisfied in a $\cV$\--algebra
  containing $A$, then it is satisfied in some finite $\cV$\--algebra
  containing $A$. 
\end{theorem}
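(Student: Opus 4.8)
The plan is to combine the classical Finite Model Property (in the form of Proposition~\ref{pr:EFMP}) with the dimension-control machinery culminating in Lemma~\ref{le:FEP}. Suppose $\varphi(\bar a,\bar q)$ is an existential $\LHA$\--formula with parameters $\bar a$ in a finite $\cV$\--algebra $A$, satisfied in some $\cV$\--algebra $H\supseteq A$; say $H\models\varphi(\bar a,\bar b)$. Write $\bar a=(a_1,\dots,a_l)$. The first step is to encode the data ``$A$ together with the distinguished tuple $\bar a$'' by a purely existential sentence: since $A$ is finite it is finitely presented, so there is an $\LHA$\--formula $\delta_A(\bar p)$ (the atomic diagram of $A$ in the generators $\bar a$, or rather a single existential sentence $\exists\bar p\,\delta_A(\bar p)$ asserting the existence of a tuple whose generated subalgebra is isomorphic to $A$ via $p_i\mapsto a_i$) such that in any Heyting algebra $H'$, a tuple $\bar a'$ satisfies $\delta_A(\bar a')$ exactly when $\bar a'$ is $\omega$\--similar to $\bar a$, i.e. $\fKer\pi_{\bar a'}=\fKer\pi_{\bar a}$. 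Then $\exists\bar p\,\exists\bar q\,(\delta_A(\bar p)\land\varphi(\bar p,\bar q))$ is an existential sentence true in $H$, hence by Proposition~\ref{pr:EFMP} it is true in some finite $\cV$\--algebra $H_1$. Inside $H_1$ we get a finite tuple $\bar a_1$ with $\fKer\pi_{\bar a_1}=\fKer\pi_{\bar a}$ and $H_1\models\varphi(\bar a_1,\bar b_1)$ for some $\bar b_1$.

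The difficulty is that this finite $H_1$ need not contain $A$: it only contains an isomorphic copy of the subalgebra of $A$ generated by $\bar a$, which is all of $A$, so actually the subalgebra generated by $\bar a_1$ in $H_1$ \emph{is} a copy of $A$. Thus $H_1$ is a finite $\cV$\--algebra containing (a copy of) $A$ in which the existential formula holds. Transporting back along the isomorphism $A\to\langle\bar a_1\rangle\subseteq H_1$ — or simply renaming — we may assume $A\subseteq H_1$ with $\bar a$ its distinguished tuple and $H_1\models\varphi(\bar a,\bar b_1)$, which is exactly the conclusion. So in fact the whole argument reduces to producing the existential sentence $\exists\bar p\,\delta_A(\bar p)$ with the stated property and invoking Proposition~\ref{pr:EFMP}.

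The one point that needs care, and which I expect to be the main obstacle, is the claim that the isomorphism type of the subalgebra generated by a tuple $\bar a$ in a Heyting algebra — equivalently the filter $\fKer\pi_{\bar a}$ of $\Freebar p$ — is pinned down by a \emph{single existential} $\LHA$\--formula rather than merely by an infinite type. This is where the finiteness of $A$ and the preceding dimension material enter: because $A$ is finite it has some finite dimension $d$, so by Lemma~\ref{le:FEP} (applied with $H=A$) there is an integer $n_d$ such that $\Th_{n_d}(\bar a')=\Th_{n_d}(\bar a)$ already forces $\fKer\pi_{\bar a'}=\fKer\pi_{\bar a}$. Since $\Th_{n_d}(\bar a)$ is a finite set of equations $\{\theta_j(\bar p)=\UN\}$ and its complement among degree\--$\leq n_d$ formulas is also finite, up to $\LHA$\--equivalence, we may take
\begin{displaymath}
  \delta_A(\bar p)=\lland_{j}\theta_j(\bar p)=\UN\ \land\ \lland_{k}\sigma_k(\bar p)\neq\UN,
\end{displaymath}
where the $\sigma_k$ range over representatives of the degree\--$\leq n_d$ formulas not in $\Th_{n_d}(\bar a)$. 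This $\delta_A$ is existential (indeed quantifier\--free), and by Lemma~\ref{le:FEP} any tuple satisfying it generates a copy of $A$. With $\delta_A$ in hand the rest is the routine application of Proposition~\ref{pr:EFMP} sketched above. Finally, the same argument works verbatim for $\cV_2$, using that $\cV_2$ also has the Finite Model Property, which establishes the variant mentioned in Remark~\ref{re:V2}.
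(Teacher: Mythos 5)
Your proposal is correct and follows essentially the same route as the paper: both use Lemma~\ref{le:FEP} to find an integer $n$ such that the finite conjunction of equations $\theta_j(\bar p)=\UN$ and negated equations $\sigma_k(\bar p)\neq\UN$ recording $\Th_n(\bar a)$ pins down $\fKer\pi_{\bar a}$, then quantify out the parameters and apply Proposition~\ref{pr:EFMP} to land in a finite $\cV$\--algebra containing a copy of $A$ with the required witness. The only cosmetic difference is that the paper folds your $\delta_A$ into the system itself, setting $t'=t\meet u$ and appending the $v_i$ to $\bar s$, whereas you keep it as a separate conjunct; the argument is the same.
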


\begin{proof}
  It suffices to prove the result for a formula $\exists\bar q\,\EQparsol a
  {\bar q}$ with $\bar a\in A^l$ generating $A$. Let $d$ be the degree
  of $\EQparsol p {\bar q}$. Let $n\in\NN$ be given by Lemma~\ref{le:FEP}
  applied to the integer $\dim A$ (since $A$ is finite, $A$ is also
  finite dimensional). Up to intuitionist equivalence there are
  finitely many IPC formulas $\varphi(\bar p)$ of degree $\leq n$. Let $u$ be
  the conjunction of those which belong to $\Th_n(\bar a)$, and $\bar
  v=(v_1,\dots,v_\alpha)$ be an enumeration of the others. Now consider the
  system $\EQUATION S {t'}{s'} p q$ where $t'=t\meet u$ and $\bar
  s'=(s_1,\dots,s_\kappa,v_1,\dots,v_\alpha)$. 

  By assumption $\EQparsol a {\bar q}$ has a solution $\bar b$ in some
  extension of $A$. Then $(\bar a,\bar b)$ is a solution of $\EQUATION
  S {t'}{s'} p {\bar q}$. By Proposition~\ref{pr:EFMP}, $\EQUATION S
  {t'}{s'} p {\bar q}$ then has a solution $(\bar a',\bar b')$ in a
  finite Heyting algebra $H'$ in $\cV$. In particular $b'$ is a
  solution $\EQparsol{a'}{\bar q}$. Moreover, $u(\bar a')=\UN$ and
  $v_i(\bar a')\neq\UN$ for every $i\leq \alpha$, hence $\Th_n(\bar
  a')=\Th_n(\bar a)$. By assumption on $n$, this implies that
  $\fKer\pi_{\bar a}=\fKer\pi_{\bar a'}$. So there is an isomorphism from
  $A$ to the Heyting algebra $A'$ generated by $\bar a'$ which maps
  $\bar a$ onto $\bar a'$. By embedding $A$ into $H'$ via this
  isomorphism, we get a finite $\cV$\--algebra containing $A$ in which
  $\EQparsol a {\bar q}$ has a solution.
\end{proof}

\paragraph{Acknowledgement} I would like to thank Sam van Gool and
Luca Reggio for helpful discussions during the TOLO VI conference a
Tbilissi, Georgia, and to express my gratitude to the organizers of
this meeting hosting these discussions.


\end{document}